\newif\ifabstract
\newif\iffull
\newtheorem{remark}{Remark}
\newtheorem{lemma}{Lemma}
\DeclareMathOperator*{\argmin}{arg\min}
\newcommand{\sas}{{\cal S} \alpha {\cal S} }
\newtheorem{assumption}{\textbf{H}\hspace{-3pt}}
\Crefname{assumption}{\textbf{H}\hspace{-3pt}}{\textbf{H}\hspace{-3pt}}
\crefname{assumption}{\textbf{H}}{\textbf{H}}
\begin{document}

\title{Non-Asymptotic Analysis of Fractional Langevin Monte Carlo for \\ Non-Convex Optimization}
\author{Thanh Huy Nguyen, Umut \c{S}im\c{s}ekli, Ga\"{e}l Richard  \vspace{3pt} \\
{\small LTCI, T\'{e}l\'{e}com Paristech, Universit\'{e} Paris-Saclay, 75013, Paris, France }}
\date{}

\maketitle

\noindent
% \raggedright

\begin{abstract}
Recent studies on diffusion-based sampling methods have shown that Langevin Monte Carlo (LMC) algorithms can be beneficial for non-convex optimization, and rigorous theoretical guarantees have been proven for both asymptotic and finite-time regimes. Algorithmically, LMC-based algorithms resemble the well-known gradient descent (GD) algorithm, where the GD recursion is perturbed by an additive Gaussian noise whose variance has a particular form. Fractional Langevin Monte Carlo (FLMC) is a recently proposed extension of LMC, where the Gaussian noise is replaced by a heavy-tailed $\alpha$-stable noise. As opposed to its Gaussian counterpart, these heavy-tailed perturbations can incur large jumps and it has been empirically demonstrated that the choice of $\alpha$-stable noise can provide several advantages in modern machine learning problems, both in optimization and sampling contexts. However, as opposed to LMC, only asymptotic convergence properties of FLMC have been yet established. In this study, we analyze the non-asymptotic behavior of FLMC for non-convex optimization and prove finite-time bounds for its expected suboptimality. Our results show that the weak-error of FLMC increases faster than LMC, which suggests using smaller step-sizes in FLMC. We finally extend our results to the case where the exact gradients are replaced by stochastic gradients and show that similar results hold in this setting as well. %Our results can also be used for obtaining guarantees for Bayesian posterior sampling.

\end{abstract}

\newtheorem{corollary}{Corollary}
\newtheorem{theorem}{Theorem}

\tableofcontents

%!TEX root = ./flmc_icml19_arxiv.tex

\section{Introduction }

% \begin{itemize}
% \color{blue}
% \item Discuss diffusion-based Monte Carlo 
% \item Their connections with optimization
% \item Mention works on convex and non-convex optimization 
% \item Mention / motivate Levy-driven methods -- FLMC and FHMC
% \item Main motivation: there is asymptotic guarantees for sampling and strong experimental results showing the advantages but there is no finite-time guarantees
% \item Summary of our approach -- first bound the sampling error then extend to optimization
% \end{itemize}

Diffusion-based Markov Chain Monte Carlo (MCMC) algorithms have become increasingly popular in the recent years due to their nice scalability properties and theoretical guarantees. The main aim in these approaches is to generate samples from a distribution which is only accessible by its unnormalized density function.

One of the most popular approaches in this field is based on the so-called Langevin diffusion, which is described by the following stochastic differential equation (SDE):
\begin{align}
\mathrm{d}X(t)=- \nabla f(X(t))\mathrm{d}t + \sqrt{2/\beta}\>\mathrm{d} \mathrm{B}(t),\,\,\,t\geq0, \label{eqn:langevin_sde}
\end{align}
where $X(t) \in \mathbb{R}^d$, $f$ is a smooth function which is often non-convex, $\beta \in \mathbb{R}_+$ is called the `inverse temperature' parameter, and $\mathrm{B}(t)$ is the standard Brownian motion in $\mathbb{R}^d$.

Under some regularity conditions on $f$, one can show that the Markov process $(X_t)_{t \geq 0}$, i.e.\ the solution of the SDE \eqref{eqn:langevin_sde}, is ergodic with its unique invariant measure $\pi$, whose density is proportional to $\exp(-\beta f(x))$ \cite{Roberts03}. An important feature of this measure is that, when $\beta$ goes to infinity, its density concentrates around the global minimum $x^\star \triangleq \argmin_{x\in \mathbb{R}^d} f(x)$ \cite{hwang1980laplace,gelfand1991recursive}.
This property implies that, if we could simulate \eqref{eqn:langevin_sde} for large enough $\beta$ and $t$, the simulated state $X(t)$ would be close to $x^\star$.

This connection between diffusions and optimization, motivates simulating \eqref{eqn:langevin_sde} in discrete-time in order to obtain `almost global optimizers'. If we use a first-order Euler-Maruyama discretization, we obtain a `tempered' version of the well-known Unadjusted Langevin Algorithm (ULA) \cite{Roberts03}:
\begin{align}
W^{k+1}_{\text{ULA}} = W^{k}_{\text{ULA}} - \eta \nabla f(W^{k}_{\text{ULA}}) + \sqrt{\frac{2\eta}{\beta}} \Delta B_{k+1}, \label{eqn:ula}
\end{align}
where $k \in \mathbb{N}_+$ denotes the iterations, $\eta$ denotes the step-size, and $(\Delta B_{n})_n$ is a sequence of independent and identically-distributed (i.i.d.) standard Gaussian random variables. When $\beta =1$, we obtain the classical ULA, which is mainly used for Bayesian posterior sampling. Theoretical properties of the classical ULA have been extensively studied \cite{Roberts03,lamberton2003recursive,durmus2015non,durmus2016high,dalalyan2017theoretical}.

When $\beta \gg 1$, the algorithm is called tempered and becomes more suitable for optimization. Indeed, one can observe that the noise term $\Delta B_k$ in \eqref{eqn:ula} becomes less dominant, and the overall algorithm can be seen as a `perturbed' version of the gradient descent (GD) algorithm. The connection between ULA and GD has been recently established in \cite{dalalyan2017further} for strongly convex $f$. Moreover, \cite{raginsky17a} and \cite{xu2018global} proved non-asymptotic guarantees for this perturbed scheme\footnote{The results given in \cite{raginsky17a} are more general in the sense that they are proved for the Stochastic Gradient Langevin Dynamics (SGLD) algorithm \cite{WelTeh2011a}, which is obtained by replacing the gradients in \eqref{eqn:ula} with stochastic gradients.}. Their results showed that, even in non-convex settings, the algorithm is guaranteed to escape from local minima and converge near the global minimizer. These results were extended in \cite{zhang17b} and \cite{tzen2018local}, which showed that the iterates converge near a local minimum in polynomial time and stay there for an exponential time. Recently, the guarantees for ULA were further extended to second-order Langevin dynamics \cite{gao2018global,gao2018breaking}.

\begin{figure*}[h]
\centering
\includegraphics[width=0.49\columnwidth]{./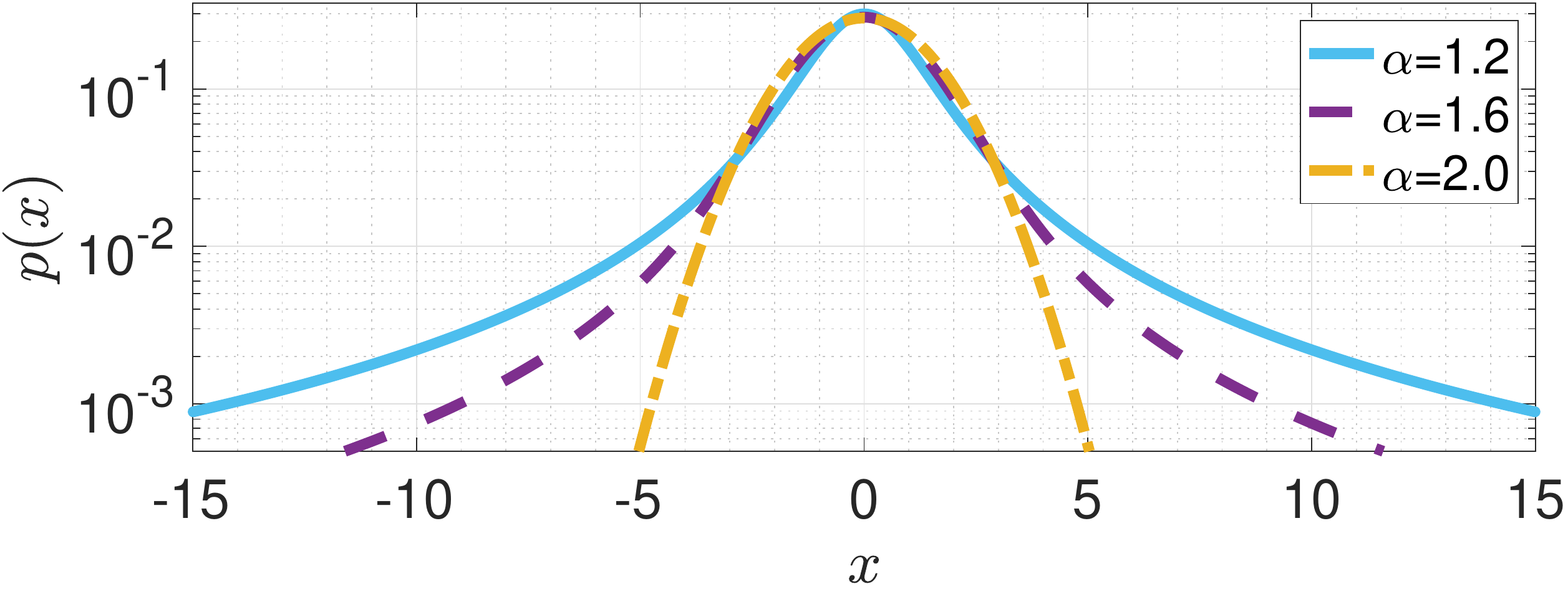} 
\hfill  \includegraphics[width=0.49\columnwidth]{./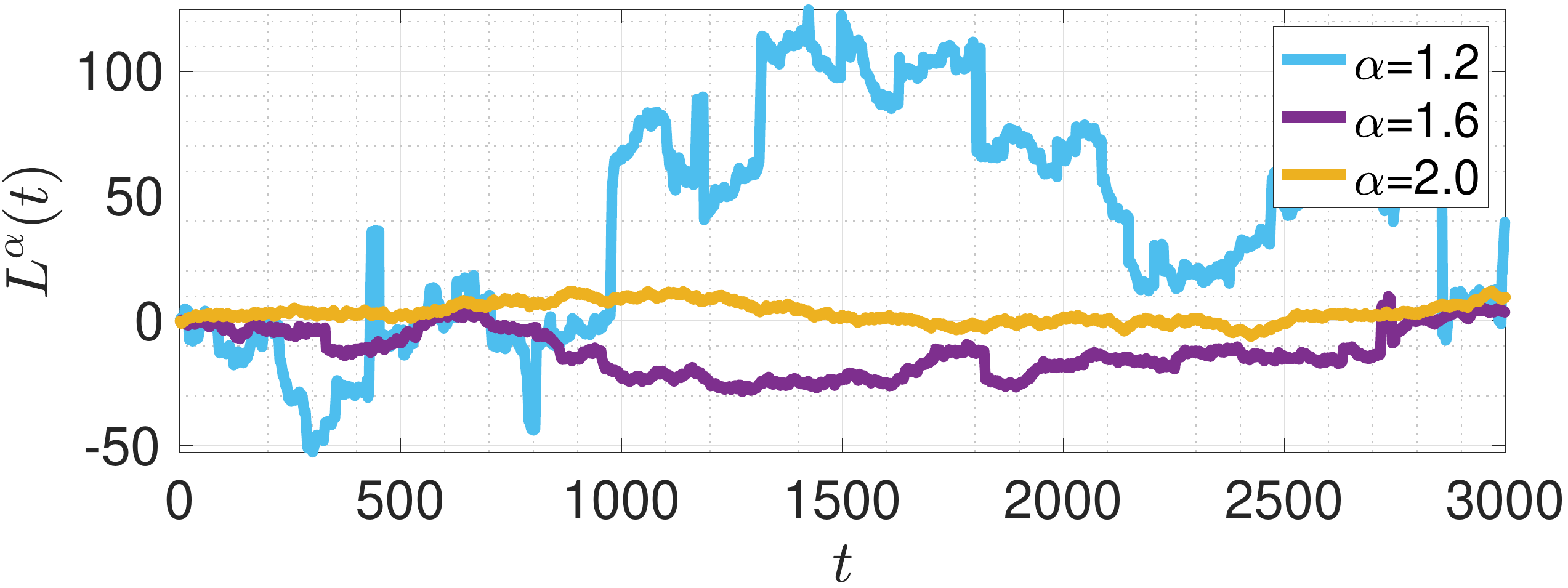}  
\vspace{-12pt}
\caption{Illustration of the density function of the symmetric $\alpha$-stable ($\sas$) distribution (left) and the $\alpha$-stable L\'{e}vy motion (right). As $\alpha$ gets smaller, $\sas$ becomes heavier-tailed and consequently, $L^\alpha(t)$ incurs larger jumps.}  %($\alpha=2$: Gaussian distribution) ($\alpha=2$: Brownian motion)
\vspace{-12pt}
\label{fig:stablepdf_motion}
\end{figure*}

Another line of research has extended Langevin Monte Carlo by replacing the Brownian motion with a motion which can incur `jumps' (i.e.\ discontinuities), such as the \emph{$\alpha$-stable L\'{e}vy Motion}  (see Figure~\ref{fig:stablepdf_motion}) \cite{simsekli17a,ye_sfhmc}. Coined under the name of Fractional Langevin Monte Carlo (FLMC) methods, these approaches are motivated by the statistical physics origins of the Langevin equation \eqref{eqn:langevin_sde}. In such a context, the Langevin equation aims to model the position of a small particle that is under the influence of a force, which has a deterministic and a stochastic part. If we assume that the stochastic part of this force is a sum of many i.i.d.\ random variables with finite variance, then by the central limit theorem (CLT), we can assume that their sum follows a Gaussian distribution, which justifies the Brownian motion in \eqref{eqn:langevin_sde}. 

The main idea in FLMC is to relax the finite variance assumption and allow the random pulses to have infinite variance. In such a case, the classical CLT will not hold; however, the \emph{extended} CLT \cite{paul1937theorie} will still be valid: the law of the sum of the pulses converges to an \emph{$\alpha$-stable} distribution, a family of `heavy-tailed' distributions that contains the Gaussian distribution as a special case. Then, by using a similar argument to the previous case, we can replace the Brownian motion with the $\alpha$-stable L\'{e}vy Motion \cite{yanovsky2000levy}, whose increments are $\alpha$-stable distributed. 

Based on an SDE driven by an $\alpha$-stable L\'{e}vy Motion, \cite{simsekli17a} proposed the following iterative scheme that is referred to as Fractional Langevin Algorithm (FLA):
\begin{align}
W^{k+1}_{\text{FLA}} = W^{k}_{\text{FLA}} - \eta c_\alpha \nabla f(W^{k}_{\text{FLA}}) + \Bigl(\frac{\eta}{\beta}\Bigr)^{\frac1{\alpha}} \Delta L^{\alpha}_{k+1}, \label{eqn:fla}
\end{align}
where $\alpha \in (1,2]$ is called the characteristic index, $c_\alpha$ is a known constant, and $\{\Delta L^{\alpha}_{k}\}_{k\in \mathbb{N}_+}$ is a sequence of $\alpha$-stable distributed random variables. As we will detail in Section~\ref{sec:prel}, FLA coincides with ULA when $\alpha =2$.
Recently, \cite{ye_sfhmc} extended FLA to Hamiltonian dynamics. The experimental results in \cite{simsekli17a} and \cite{ye_sfhmc} showed that the use of the heavy-tailed increments can provide advantages in multi-modal settings, robustness to algorithm parameters. \cite{ye_sfhmc} further illustrated that in an optimization context their algorithm achieves better generalization in deep neural networks.

Even though asymptotic convergence properties of FLMC were established for decreasing step-sizes in \cite{simsekli17a,panloup2008recursive}, these results do not explain the behavior of the algorithm for finite number of iterations. Besides, in practice, using a constant step-size often yields better performance \cite{baker2017sgmcmc}, a situation which cannot be handled by the existing theory.% provided in \cite{simsekli17a}. , a situation which is well-understood for ULA.

\subsection{Overview of the main result}

In this study, we analyze the non-asymptotic behavior of FLA for non-convex optimization. In particular, we analyze the expected suboptimality $\mathbb{E}[f(W^{k}_{\text{FLA}}) - f^\star]$, where $f^\star \triangleq f(x^\star)$. As we will describe in detail in Section~\ref{sec:proof_overview}, we decompose this suboptimality into four different terms, and we bound each of those terms one by one. Due to the choice of the $\alpha$-stable L\'{e}vy motion, the standard tools for analyzing SDEs driven by a Brownian motion are not available for our use, and therefore, we cannot use the proof strategies developed for ULA as they are (such as \cite{raginsky17a,xu2018global,erdogdu2018global}). Instead, we follow an alternative path, where we first relate the expected discrepancies to Wasserstein distance of fractional orders, and then, inspired by \cite{gairing2018transportation}, we prove a result that expresses the Wasserstein distance between the laws of two SDEs (driven by $\alpha$-stable L\'{e}vy motion) in terms of their drift functions. 

% \footnote{\citet{raginsky17a} follows a similar approach; however they are able to relate the expected discrepancies to Wasserstein-2 distance by using the results from \cite{polyanskiy2016wasserstein}.}

Informally, we show that the expected suboptimality $\mathbb{E}[f(W^{k}_{\text{FLA}}) - f^\star]$ is bounded by a sum of four terms, summarized as follows:
\begin{align*}
\mathbb{E}[f(W^{k}_{\text{FLA}}) - f^\star] &\leq \mathcal{A}_1 + \mathcal{A}_2 + \mathcal{A}_3 + \mathcal{A}_4, 
\end{align*}
where
\begin{align*}
\mathcal{A}_1 &= \mathcal{O} \Bigl( k^{1+\max\{\frac{1}{q},\gamma+\frac{\gamma}{q}\}}\eta^{\frac{1}{q}} \Bigr), \quad\mathcal{A}_2 = \mathcal{O} \Bigl( \frac{k^{1+\max\{\frac{1}{q},\gamma+\frac{\gamma}{q}\}}\eta^{\frac{1}{q}+\frac{\gamma}{\alpha q}}d}{\beta^{\frac{(q-1)\gamma}{\alpha q}}} \Bigr),\\
\mathcal{A}_3  &= \mathcal{O} \Bigl(\beta+d \Bigr) \exp\Bigl(- \frac{\lambda_* k\eta}{\beta}  \Bigr), \quad\mathcal{A}_4 = \mathcal{O} \Bigl(\frac1{\beta^{\gamma+1}} + \frac{d}{\beta} \log(\beta+1) \Bigr).
\end{align*}
Here $\gamma \in (0,1)$ is the H\"{o}lder exponent of the gradients of $f$, and $q \in (1,\alpha)$, $\lambda_*>0$ are some constants. This result has the following implications. For any $\varepsilon>0$,
\begin{enumerate}[itemsep = 1pt, topsep=0pt,leftmargin=*,align=left]
\item If $\frac{1}{q}>\gamma+\frac{\gamma}{q}$ and 
%\begin{align*}
$k \simeq \varepsilon^{-1} \quad \text{and} \quad \eta < \varepsilon^{2q+1}$, %\phantom{asd}
%\end{align*}
then ${\cal A}_1$ scales as $C\varepsilon$ and ${\cal A}_2$ scales as $\varepsilon\mathrm{Poly}(\beta,d)$.

\item If $\frac{1}{q}\leq\gamma+\frac{\gamma}{q}$ and
% \begin{align*}
$k \simeq \varepsilon^{-1} \quad \text{and} \quad \eta < \varepsilon^{2q+\gamma+\gamma q}$, %\phantom{asd}
% \end{align*}
then ${\cal A}_1$ scales as $C\varepsilon$ and ${\cal A}_2$ scales as $\varepsilon\mathrm{Poly}(\beta,d)$. 
%
% \item For any $\varepsilon>0$, if we choose 
% \begin{align*}
% K \simeq \varepsilon^{-1}\mathrm{Poly}(\beta,d) \quad \text{and} \quad \eta < \varepsilon^{2q+\gamma q+\gamma}\mathrm{Poly}(\beta,d), \phantom{a}
% \end{align*}
% then ${\cal A}_2$ scales as $\varepsilon\mathrm{Poly}(\beta,d)$.
% %
\item If we choose %\umut{this is a bit weird? $K$ decreases for small $\varepsilon$?}
% \begin{align*}
% K > (\log\varepsilon)^{2} \beta \quad \text{and} \quad \eta< -(\log\varepsilon)^{-1}\lambda_*^{-1}, \phantom{asdasasdd}
$k\eta > \frac{\beta}{\lambda_*} \log\Bigl(\frac1{\varepsilon}\Bigr)$,
% \end{align*}
then ${\cal A}_3$ scales as $\varepsilon\mathrm{Poly}(\beta,d)$. 
% \item The term $\mathcal{A}_4$ does not depend on $K$ or $\eta$, and can be made arbitrarily small by increasing $\beta$.
\end{enumerate}
% \umut{Comparison to Raginsky}
% \umut{Need to discuss $\lambda_*$}
% \umut{Discuss the extensions: guarantees for sampling, stochastic gradients.}
% Our results can be used for obtaining guarantees for Bayesian posterior sampling as well. In such a case, $\pi$ would be chosen as the posterior distribution of $X$ after observing some data points. We discuss this case in Section~\ref{sec:sampling}. 
% We finally extend our results to the stochastic gradients in Section~\ref{sec:stoch_grad}, which are useful in large scale applications where computing exact gradients of $f$ might be computationally infeasible. Our results show that, under certain regularity assumptions, the error induced by the stochastic gradients can be neglected and the guarantees for the algorithm with exact gradient will still hold. 
Here, $\mathrm{Poly}(\ldots)$ denotes a formal polynomial, i.e., an expression containing the real-ordered exponents of the variables, coefficients, and only the operations of addition, subtraction, and multiplication.

In Section~\ref{sec:extensions}, we extend our results in two directions: (i) obtaining guarantees for Bayesian posterior sampling and (ii) non-convex optimization where exact gradients are replaced with stochastic gradients. Our results imply that, in the context of global optimization, the error induced by FLA has a worse dependency on $k$ and $\eta$, as compared to ULA. This suggests that one should use smaller step-sizes in FLA.

\newtheorem{definition}{Definition}

\section{Technical Background and Preliminaries}

\label{sec:prel}

% \begin{itemize}
	% \color{blue}
% \item Notations and basic definitions (Wasserstein distance etc)
% \item $\alpha$-stable distributions
% \item Levy motion
% \item Riesz derivative
% \item FLMC
% \end{itemize}

\subsection{Notations and basic definitions}

In this section, we will define the basic quantities that will be used throughout the paper. 
%
% \umut{Define the inner product $<.,.>$, define the expectation notation etc
% $\mathrm{Poly}(k,\eta,\beta,d)$, norm}
We use $<\cdot,\cdot>$ to denote the inner product between two vectors, $\Vert\cdot\Vert$ denotes the Euclidean norm, $\mathbb{E}_{\omega}[\cdot]$ denotes the expectation with respect to the random variable $\omega$, and $\mathbb{E}[\cdot]$ denotes the expectation with respect to all the random sources. We will use the Wasserstein metric to quantify the distance between two probability measures.
\begin{definition}[Wasserstein distance] Let $\mu$ and $\nu$ be two probability measures. For $\lambda\geq1$, we define the $\lambda$-Wasserstein distance between $\mu$ and $\nu$ as follows:
\begin{align*}
\mathcal{W}_{\lambda}(\mu,\nu)\triangleq(\inf\{\mathbb{E}\Vert V-W\Vert^{\lambda}: V\sim\mu,W\sim\nu\})^{1/\lambda},
\end{align*}
where the infimum is taken over all the couplings of $\mu$ and $\nu$ (i.e.\ the joint probability distributions whose marginal distributions are $\mu$ and $\nu$). 
\end{definition}
From now on, we will denote $W^{k}_{\text{FLA}}$ as $W^{k}$ for notational simplicity.
All the proofs are given in the supplementary document.

\subsection{$\alpha$-Stable Distributions and $\alpha$-Stable L\'{e}vy Motion}\label{subsec:levyProcess}

\begin{definition}[Symmetric $\alpha$-stable random variables]\label{def:StableDistribution}
The $\alpha$-stable distribution appears as the limiting distribution in the generalized CLT \cite{samorodnitsky1994stable}. A scalar random variable $X \in \mathbb{R}$ is called symmetric $\alpha$-stable if its characteristic function has the following form:
\begin{align*}
\mathbb{E}[e^{i \omega X}]=\exp(-\sigma\vert \omega\vert^{\alpha}) %\Phi_X(\omega) \triangleq 
\end{align*}
where $\alpha \in (0,2]$ and $\sigma>0$. We denote $X\sim \sas(\sigma)$. 
% When $\beta=\gamma=0$, we call $X\sim S_{\alpha}(\sigma,0,0)$ symmetric $\alpha$-stable random variable.
\end{definition}

The parameter $\alpha$ is called the \emph{characteristic index} or the \emph{tail index}, since it determines the tail behavior of the distribution. Perhaps the most important special case of symmetric $\alpha$-stable distributions is the Gaussian distribution: $\sas(\sigma) = {\cal N}(0,2\sigma^2)$ when $\alpha =2$. As we decrease $\alpha$, the distribution becomes \emph{heavier-tailed}. Moreover, when $X \sim \sas(\sigma)$, the moment $\mathbb{E}[|X|^p]$ is finite if and only if $p < \alpha$. This implies that the distribution has infinite variance (i.e.\ the variance diverges) whenever $\alpha \neq 2$.

\begin{definition}[Symmetric $\alpha$-stable L\'evy motion]\label{def:symAlphaLevy}
A scalar symmetric $\alpha$-stable L\'evy motion $L^{\alpha}(t)$, with $0<\alpha\leq 2$, is a stochastic process satisfying the following properties:

\begin{enumerate}[label=(\roman*),noitemsep,topsep=0pt,leftmargin=*,align=left]
\item $L^{\alpha}(0) = 0$, almost surely.
\item Independent increments: for $0\leq t_1<\ldots<t_n$, the random variables  $L^{\alpha}(t_2)-L^{\alpha}(t_1)$,..., $L^{\alpha}(t_{n})-L^{\alpha}(t_{n-1})$ are independent.
\item Stationary increments: for all $0\leq s< t$, the random variables $L^{\alpha}(t)-L^{\alpha}(s)$ and $L^{\alpha}(t-s)$ have the same distribution as $\sas((t-s)^{1/\alpha})$.
\item Continuity in probability: for any $\delta >0$ and $s\geq 0$, $\mathbb{P}(\vert L^{\alpha}(s) - L^{\alpha}(t)\vert>\delta)\rightarrow 0$, as $t\rightarrow s$.
% \umut{write this properly}
\end{enumerate}
\end{definition}
%^{\alpha}
% \umut{define the d dimensional version}
We illustrate $\sas$ and $L^\alpha(t)$ in Figure~\ref{fig:stablepdf_motion}. In the rest of the paper, $L^{\alpha}(t)$ will denote a $d$-dimensional L\'evy process whose components are independent scalar symmetric $\alpha$-stable L\'evy motions as defined in \Cref{def:symAlphaLevy}.

\subsection{Fractional Langevin Monte Carlo}

% Outline:
% \begin{itemize}
% \item Define the SDE driven by a levy motion
% \item Then write down theorem 1 of \cite{simsekli17a} -- invariant measure
% \item If you use the definition of the Riesz derivative given by \cite{ortigueira2006riesz}, we can approximate the drift and obtain the numerical algorithm \eqref{eqn:fla} -- define what is $c_\alpha$
% \end{itemize}

The FLMC framework is based on a L\'{e}vy-driven SDE, that is defined as follows:
\begin{align}\label{eqn:fracLangevin}
\text{d}X(t)=\Psi(X(t-),\alpha)\text{d}t + (1/{\beta})^{1/\alpha}\text{d}L^{\alpha}(t) %,\,\,\,t\geq0,
\end{align}
where $X(t-)$ denotes the \emph{left limit} of the process at time $t$, $L^{\alpha}(t)$ denotes the $d$-dimensional L\'evy motion as described in \Cref{subsec:levyProcess}.
FLMC is built up on the following result:
\begin{theorem}[\cite{simsekli17a}]
\label{thm:flmc_org}
Consider the SDE \eqref{eqn:fracLangevin} in the case $d=1$, $\beta =1$, and $\alpha \in (1,2]$, where the drift $\Psi$ is defined as follows:
\begin{align}
\label{eqn:flmc_drift_old}\Psi(x,\alpha) \triangleq -\frac{\mathcal{D}^{\alpha-2}\Big(\phi(x)\frac{\partial f(x)}{\partial x}\Big)\Big)}{\phi(x)}.
\end{align}
where $\mathcal{D}$ denotes the fractional Riesz derivative and is defined as follows for a function $u$:
\begin{align*}
\mathcal{D}^\gamma u(x)\triangleq\mathcal{F}^{-1}\{\vert\omega\vert^\gamma\hat{u}(\omega)\},
\end{align*}
Here, $\mathcal{F}$ denotes the Fourier transform and $\hat{u}\triangleq\mathcal{F}(u)$.
Then, $\pi$ is an invariant measure of the Markov process $(X(t))_{t\geq0}$ that is a solution of the SDE given by \eqref{eqn:fracLangevin}. %Furthermore, if $\Psi$ is Lipschitz continuous, then $\pi$ is the unique invariant measure of the process $(X(t))_{t\geq0}$.
\end{theorem}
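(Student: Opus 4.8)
The plan is to establish invariance through the infinitesimal generator of the SDE \eqref{eqn:fracLangevin} rather than by any pathwise argument. Writing $\phi(x) = \exp(-f(x))$ for the (unnormalized) target density, so that $\pi$ has density $p=\phi/Z$, I would first recall that for a L\'evy-driven SDE of the form $\mathrm{d}X(t)=\Psi(X(t-),\alpha)\mathrm{d}t+\mathrm{d}L^{\alpha}(t)$ the generator acts on a smooth test function $g$ as
\begin{align*}
\mathcal{L}g(x) = \Psi(x,\alpha)\,g'(x) - \mathcal{D}^{\alpha}g(x),
\end{align*}
where the jump part of the generator is exactly the (negative) Riesz derivative $-\mathcal{D}^{\alpha}$, since the symmetric $\alpha$-stable motion has Fourier symbol $-|\omega|^{\alpha}$, matching the definition of $\mathcal{D}^{\alpha}$ in the statement. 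A measure $\pi$ is invariant precisely when $\int \mathcal{L}g\,\mathrm{d}\pi = 0$ for every $g$ in a core of the generator, so the theorem reduces to verifying this identity for $p=\phi/Z$.

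Next I would transfer the operators onto $\phi$ by duality. Integration by parts handles the drift, $\int \Psi g'\phi\,\mathrm{d}x = -\int g\,(\Psi\phi)'\,\mathrm{d}x$, while the Riesz derivative is self-adjoint because $|\omega|^{\alpha}$ is real and even, giving $\int (\mathcal{D}^{\alpha}g)\phi\,\mathrm{d}x = \int g\,(\mathcal{D}^{\alpha}\phi)\,\mathrm{d}x$ by Parseval. Hence
\begin{align*}
\int \mathcal{L}g\,\phi\,\mathrm{d}x = -\int g\,\bigl[(\Psi\phi)' + \mathcal{D}^{\alpha}\phi\bigr]\,\mathrm{d}x,
\end{align*}
and since $g$ is arbitrary it suffices to prove the pointwise identity $(\Psi\phi)'+\mathcal{D}^{\alpha}\phi=0$. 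Substituting the drift \eqref{eqn:flmc_drift_old} collapses the first term, $\Psi(x,\alpha)\phi(x) = -\mathcal{D}^{\alpha-2}(\phi f')$, and the key simplification is that $\phi=e^{-f}$ forces $\phi f' = -\phi'$, so that $\Psi\phi = \mathcal{D}^{\alpha-2}\phi'$.

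The remaining identity $\partial_x\mathcal{D}^{\alpha-2}\phi' = -\mathcal{D}^{\alpha}\phi$ is then a one-line Fourier computation: on the Fourier side $\widehat{\phi'} = i\omega\hat{\phi}$, applying $\mathcal{D}^{\alpha-2}$ multiplies by $|\omega|^{\alpha-2}$, and the outer derivative multiplies by $i\omega$ again, producing $(i\omega)^2|\omega|^{\alpha-2}\hat{\phi} = -|\omega|^{\alpha}\hat{\phi}$, which is exactly the symbol of $-\mathcal{D}^{\alpha}\phi$. Plugging back gives $(\Psi\phi)'+\mathcal{D}^{\alpha}\phi = -\mathcal{D}^{\alpha}\phi+\mathcal{D}^{\alpha}\phi = 0$, completing the formal argument.

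I expect the real work to lie in the analytic justification rather than in this computation. Three points need care: (i) identifying the jump generator with $-\mathcal{D}^{\alpha}$ and specifying the domain/core of $\mathcal{L}$ on which $\int\mathcal{L}g\,\mathrm{d}\pi=0$ characterizes invariance; (ii) the vanishing of boundary terms in the integration by parts and the validity of Parseval, which require decay and integrability assumptions on $f$, $\phi$, and their derivatives; and (iii) the most delicate issue, that $\mathcal{D}^{\alpha-2}$ has order $\alpha-2\in(-1,0]$ and is therefore a singular Riesz potential whose symbol $|\omega|^{\alpha-2}$ blows up at the origin. The saving grace, which I would make explicit, is that it is applied to $\phi'$, whose transform $i\omega\hat{\phi}$ vanishes at $\omega=0$ (equivalently $\int\phi'\,\mathrm{d}x=0$), so the product $|\omega|^{\alpha-2}\cdot i\omega\hat{\phi}$ behaves like $|\omega|^{\alpha-1}$ and stays integrable near the origin since $\alpha>1$; this is precisely why the drift is built from $\phi f'$ and why the restriction $\alpha\in(1,2]$ appears.
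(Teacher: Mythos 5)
Your proposal is correct and follows essentially the same route as the paper: the paper (which proves this via its $d$-dimensional, general-$\beta$ extension in Lemma~\ref{lemma:earlyLemma}) cites the fractional Fokker--Planck equation of \cite{schertzer2001fractional} and verifies that $\pi$ annihilates its right-hand side, which is exactly the stationarity identity $(\Psi\phi)'+\mathcal{D}^{\alpha}\phi=0$ you derive in weak form through the generator and adjointness. Your key algebraic steps --- $\phi f'=-\phi'$ and the symbol identity $(i\omega)^2\vert\omega\vert^{\alpha-2}=-\vert\omega\vert^{\alpha}$ --- coincide with the paper's use of $-\beta\,\partial_{x}f=\partial_{x}\pi/\pi$, $\mathcal{D}^{2}u=-\partial_{x}^{2}u$, and the semigroup property $\mathcal{D}^{a}\mathcal{D}^{b}=\mathcal{D}^{a+b}$, and your closing remarks on the integrability of $\vert\omega\vert^{\alpha-2}\,i\omega\hat{\phi}$ near the origin correctly pinpoint why $\alpha\in(1,2]$ is needed.
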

This theorem states that if the drift \eqref{eqn:flmc_drift_old} can be computed, then the sample paths of \eqref{eqn:fracLangevin} can be considered as samples drawn from $\pi$. However, computing \eqref{eqn:flmc_drift_old} is in general not tractable, therefore one needs to approximate it for computational purposes.
% \begin{definition}[Riesz derivative]
% The fractional Riesz derivative of :
% \begin{align*}
% \mathcal{D}^\gamma u(x)\triangleq\mathcal{F}^{-1}\{\vert\omega\vert^\gamma\hat{u}(\omega)\},
% \end{align*}
% where  %
% \end{definition}
If we use the alternative definition of the Riesz derivative given by \cite{ortigueira2006riesz}, we can approximate the drift as follows \cite{simsekli17a,ye_sfhmc}: 
\begin{align*}
-\frac{\mathcal{D}^{\alpha-2}\Big(\phi(x)\frac{\partial f(x)}{\partial x}\Big)\Big)}{\phi(x)}\approx -c_\alpha\frac{\partial f(x)}{\partial x},
\end{align*}
where $c_{\alpha}\triangleq\Gamma(\alpha-1)/\Gamma(\alpha/2)^2$ and $\Gamma$ denotes the Gamma function. With this choice of approximation, in the $d$-dimensional case we obtain FLA, as given in \eqref{eqn:fla}. We can observe that, when $\alpha=2$, \eqref{eqn:fracLangevin} becomes the Langevin equation \eqref{eqn:langevin_sde} and FLA becomes ULA. 
%!TEX root = ./flmc_icml19_arxiv.tex

\section{Assumptions and the Main Result}

% \begin{itemize}
% \color{blue}
% % \item State the assumptions
% % \item Provide the invariant measure result
% % \item Provide the bounds for each term
% \end{itemize}

% we  decompose the error $\mathbb{E}f(W^k)-f^*$ in more manageable terms 

We start by defining three different stochastic processes $X_1(t)$, $X_2(t)$, and $X_3(t)$, which will be the main constructs in our analysis. We first informally define these processes as follows: $X_2$ is a continuous-time process that interpolates $W^k$ in time and it will let us avoid dealing with the discrete-time process $W^k$ directly. $X_1$ is the limiting process of $X_2$ when the step-size goes to zero. Finally, $X_3$ is a process whose law converges to the Gibbs measure $\pi$. 
%

%instead of dealing with the discrete-time process $W^k$ directly, we will use the continuous-time process
In our approach, we will first relate $X_2$ to its limiting process $X_1$. Since it is more challenging to relate $X_1$ to $x^\star$, we will then relate $X_1$ to $X_3$, and $X_3$ to $\pi$. By following a similar approach to \cite{raginsky17a}, we will finally relate $\pi$ to $f^\star$.  
Formally, we decompose the expected suboptimality in the following manner:
\begin{align}\label{eqn:decompRiskFrac}
\nonumber\mathbb{E}f(W^k)-f^*=& \>\Big(\mathbb{E}f(X_2(k\eta)) - \mathbb{E}f(X_1(k\eta))\Big) + \Big(\mathbb{E}f(X_1(k\eta)) - \mathbb{E}f(X_3(k\eta))\Big) + \Big(\mathbb{E}f(X_3(k\eta)) - \mathbb{E}f(\hat{W})\Big)\\
& + \Big(\mathbb{E}f(\hat{W}) - f^*\Big), 
\end{align}
where $X_i(k\eta)$ with $i=1,2,3$ denotes the state reached by the three stochastic processes at time $k\eta$, and $\hat{W}$ is a random variable drawn from $\pi$. We will now formally define the processes $X_1$, $X_2$, and $X_3$.

% We start by defining three stochastic differential equations, which will be the main constructs in our analysis. 

The first SDE is the continuous-time limit of the FLA algorithm given in \eqref{eqn:fla} and defined as follows for $t\geq0$:
\begin{align}
\mathrm{d}X_1(t)=b_1(X_1(t-),\alpha)\mathrm{d}t + \beta^{-1/\alpha}\mathrm{d}L^{\alpha}(t),
\label{sde:cont_euler}
\end{align}
where the drift function has the following form:
\begin{align*}
b_1 (x,\alpha) &\triangleq - c_\alpha\nabla f(x).
\end{align*}
The second SDE is a \emph{linearly interpolated} version of the discrete-time process $\{W^k\}_{k\in \mathbb{N}_+}$, defined as follows: 
\begin{align}
\mathrm{d}X_2(t)=b_2(X_2,\alpha)\mathrm{d}t + \beta^{-1/\alpha}\mathrm{d}L^{\alpha}(t), 
\end{align}
where $X_2 \equiv \{X_2(t)\}_{t\geq 0}$ denotes the whole process and the drift function is chosen as follows:
\begin{align*}
b_2 (X_2,\alpha) &\triangleq -c_\alpha\sum_{k=0}^{\infty}\nabla f(X_2(j\eta))\mathbb{I}_{[j\eta,(j+1)\eta[}(t). 
\end{align*}
Here, $\mathbb{I}$ denotes the indicator function, i.e.\ $\mathbb{I}_{A}(x) = 1$ if $x \in A$ and $\mathbb{I}_{A}(x) = 0$ if $x \notin A$. It is easy to verify that $X_2(k\eta) = W^k$ for all $k \in \mathbb{N}_+$ \cite{dalalyan2017theoretical,raginsky17a}.

The last SDE is designed in such a way that its solution has the Gibbs distribution as the invariant distribution and is defined as follows: 
\begin{align}
\label{eqn:FracLangevinGibbs}
\mathrm{d}X_3(t)=b(X_3(t-),\alpha)\mathrm{d}t + \beta^{-1/\alpha}\mathrm{d}L^{\alpha}(t),
\end{align}
where the drift is a $d$-dimensional vector whose $i$-th component, $i=1,\ldots,d$, has the following form: %\umut{we need to write this in $d$ dimensions.}.
\begin{align}
\label{eqn:btrue}
(b(x,\alpha))_i &\triangleq -\frac{\mathcal{D}_{x_i}^{\alpha-2}\Big(\phi(x)\frac{\partial f(x)}{\partial x_i}\Big)\Big)}{\phi(x)}.
\end{align}
Here, $\mathcal{D}_{x_i}$ denotes the Riesz derivative along the direction $x_i$ \cite{ortigueira2014}.
With this definition for the drift, we have the following result for the invariant measure of $X_3$, which is an extension of Theorem~\ref{thm:flmc_org} to general $d$ and $\beta$. % in \cite{simsekli17a}:
\begin{lemma}\label{lemma:earlyLemma}
% Let us define $b$ as:
% \begin{align}\label{driftb}
% b(x,\alpha)\triangleq\frac{\mathcal{D}^{\alpha-2}f_{\pi}(x)}{\phi(x)},
% \end{align}
% where $f_\pi(x)\triangleq-\phi(x)\partial_xf(x)$. 
The SDE \eqref{eqn:FracLangevinGibbs} with drift $b$ defined by \eqref{eqn:btrue} admits $\pi$ as an invariant distribution of its solution $(X_3(t))_{t\geq0}$. %In addition, if $b$ is Lipschitz continuous, then $\pi$ is the unique invariant measure of $(X_3(t))_{t\geq0}$.
\end{lemma}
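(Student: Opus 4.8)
The plan is to verify that $\pi$ solves the stationary Fokker--Planck (forward Kolmogorov) equation associated with \eqref{eqn:FracLangevinGibbs}, i.e.\ that the formal adjoint of the infinitesimal generator of $X_3$ annihilates the density $p$ of $\pi$. Since the $d$ components of $L^{\alpha}$ are independent scalar symmetric $\alpha$-stable L\'evy motions, the generator of the driving (pure-jump) part acts coordinate-wise, and by the scaling property of stable processes the generator of $X_3$ takes the form
\[
\mathcal{A} g(x) = \langle b(x,\alpha), \nabla g(x)\rangle - \beta^{-1}\sum_{i=1}^d \mathcal{D}_{x_i}^{\alpha} g(x),
\]
where $\mathcal{D}_{x_i}^{\alpha}$ is the one-dimensional Riesz derivative in the $x_i$ variable, whose Fourier symbol is $|\omega_i|^{\alpha}$. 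I would take $\phi$ in \eqref{eqn:btrue} to be the (unnormalized) target density, $\phi(x)\propto\exp(-\beta f(x))\propto p(x)$, exactly as in Theorem~\ref{thm:flmc_org}; the content of the lemma is then to extend that $d=1$, $\beta=1$ computation to general $d$ and $\beta$.

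First I would reduce the claim to the identity $\mathcal{A}^{*} p = 0$, where $\mathcal{A}^{*}$ is the $L^2$-adjoint of $\mathcal{A}$, since $\int \mathcal{A} g \, \mathrm{d}\pi = 0$ for all test functions $g$ is equivalent to invariance of $\pi$. Computing $\mathcal{A}^{*}$ term by term: integration by parts turns the drift part into $-\nabla\cdot(b\,p)$, while each Riesz operator $\mathcal{D}_{x_i}^{\alpha}$ is self-adjoint because its symbol $|\omega_i|^{\alpha}$ is real and even, so the jump part is unchanged. Hence the stationary equation to be established reads
\[
\nabla\cdot\bigl(b(x,\alpha)\,p(x)\bigr) + \beta^{-1}\sum_{i=1}^d \mathcal{D}_{x_i}^{\alpha} p(x) = 0.
\]

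The heart of the computation is to show that the drift, which was reverse-engineered precisely for this purpose, makes the two terms cancel. Plugging $\phi = p$ into \eqref{eqn:btrue} gives $(b\,p)_i = -\mathcal{D}_{x_i}^{\alpha-2}\bigl(p\,\partial_{x_i} f\bigr)$, and using $\partial_{x_i} p = -\beta(\partial_{x_i} f)\,p$, i.e.\ $p\,\partial_{x_i} f = -\beta^{-1}\partial_{x_i} p$, this becomes $(b\,p)_i = \beta^{-1}\mathcal{D}_{x_i}^{\alpha-2}\partial_{x_i} p$. Taking the divergence and comparing Fourier symbols, $\mathcal{D}_{x_i}^{\alpha-2}\partial_{x_i}^2$ has symbol $|\omega_i|^{\alpha-2}(-\omega_i^2) = -|\omega_i|^{\alpha} = -(\text{symbol of }\mathcal{D}_{x_i}^{\alpha})$, so $\partial_{x_i}(b\,p)_i = -\beta^{-1}\mathcal{D}_{x_i}^{\alpha} p$. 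Summing over $i$ yields $\nabla\cdot(b\,p) = -\beta^{-1}\sum_{i} \mathcal{D}_{x_i}^{\alpha} p$, which is exactly the negative of the jump term, establishing the stationary equation and hence the invariance of $\pi$. Setting $\alpha=2$ recovers the familiar Gaussian identity $\nabla\cdot(p\,\nabla f) + \beta^{-1}\Delta p = 0$ as a sanity check.

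I expect the main obstacle to be rigor rather than algebra: the operators $\mathcal{D}_{x_i}^{\alpha}$ and $\mathcal{D}_{x_i}^{\alpha-2}$ are nonlocal, so the formal steps above---self-adjointness of the Riesz derivative, the integration by parts producing $-\nabla\cdot(b\,p)$ with no boundary contribution, and the commutation of $\partial_{x_i}$ with $\mathcal{D}_{x_i}^{\alpha-2}$ through their Fourier symbols---must be justified on a suitable class of test functions together with integrability and decay assumptions on $p$ and $\nabla f$ guaranteeing that all Fourier transforms and the stable generator are well defined. A secondary point requiring care is the precise normalization of the stable process (the relation between the scale of $\sas$, the $\beta^{-1/\alpha}$ prefactor, and the constant in the generator), but this only affects constant factors and not the cancellation. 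The coordinate-wise additive structure of the $d$-dimensional generator matches the coordinate-wise Riesz derivatives $\mathcal{D}_{x_i}$ in \eqref{eqn:btrue}, so the extension from Theorem~\ref{thm:flmc_org} to general $d$ reduces to summing $d$ identical one-dimensional identities, while the extension to general $\beta$ is carried by the $\beta^{-1}$ factor tracked throughout.
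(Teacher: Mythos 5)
Your proposal is correct and follows essentially the same route as the paper: both verify that $\pi$ annihilates the fractional Fokker--Planck (i.e.\ adjoint-generator) operator of \eqref{eqn:FracLangevinGibbs}, substituting $\phi\propto e^{-\beta f}$ so that $p\,\partial_{x_i} f=-\beta^{-1}\partial_{x_i}p$, and cancelling the jump term coordinate-wise. Your Fourier-symbol identity $|\omega_i|^{\alpha-2}(-\omega_i^{2})=-|\omega_i|^{\alpha}$ is exactly the paper's combination of $\mathcal{D}^{2}u=-\partial_{x}^{2}u$ with the semigroup property $\mathcal{D}^{a}\mathcal{D}^{b}u=\mathcal{D}^{a+b}u$, so the two arguments coincide step for step.
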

The process $\{X_3(t)\}_t$ will play an important role in our analysis, since it will enable us to relate $W^k$ to the Gibbs measure $\pi$, whose samples will be close to the global optimum $x^\star$ with high probability \cite{pavlyukevich2007cooling}.

% We will use these processes to relate $f(W^k)$ to the minimum $f^\star \triangleq f(x^\star)$.

We now state our assumptions that will imply our main result. 
\begin{assumption}\label{assump:boundedGradAtZero}
There exists a constant $B\geq 0$ such that 
\begin{align*}
c_\alpha\Vert\nabla f(0)\Vert\leq B.
\end{align*} %, where constant $c_{\alpha}$ is defined in \eqref{FracLangevin}.
\end{assumption}

\begin{assumption}\label{assump:HolderContinuity}
The gradient of $f$ is H\"{o}lder continuous with constants $M>0$, $0\leq\gamma<1$:
\begin{align*}
c_\alpha\Vert\nabla f(x) - \nabla f(y)\Vert\leq M\Vert x-y\Vert^{\gamma},&& \forall x,y\in\mathbb{R}^d.
\end{align*}
% where constant $c_{\alpha}$ is defined in \eqref{FracLangevin}.
\end{assumption}

\begin{assumption}\label{assump:dissipative}
For some $m>0$ and $b\geq 0$, $f$ is $(m,b,\gamma)$-dissipative:
\begin{align*}
c_\alpha\langle x,\nabla f(x)\rangle\geq m\Vert x\Vert^{1+\gamma}-b,&& \forall x\in\mathbb{R}^d.
\end{align*}
% where constant $c_{\alpha}$ is defined in \eqref{FracLangevin}.
\end{assumption}

The assumptions \Cref{assump:boundedGradAtZero}-\Cref{assump:dissipative} are mild and when $\gamma = 1$, they become the standard Lipschitz and dissipativity conditions that are often considered in diffusion-based non-convex optimization algorithms \cite{raginsky17a,xu2018global,erdogdu2018global}. However, due to the choice of the $\alpha$-stable L\'{e}vy motion with $\alpha \in (1,2)$, we need to consider a `fractional' version of those assumptions and exclude the case where $\gamma=1$.

In our analysis, we will make a repeated use of the H\"{o}lder and Minkowski inequalities, which require the following condition to hold:
\begin{assumption}\label{assump:conditionsOnParameters}
There exist positive real numbers $p,q,p_1,q_1$ such that
\begin{gather*}
\frac{1}{p} + \frac{1}{q} = \frac{1}{p_1} + \frac{1}{q_1} = 1, \quad q<\alpha, \quad \gamma p<1, \quad \gamma q_1<1 \quad\text{and}\quad  (q-1)p_1<1.
\end{gather*}
\end{assumption}
Even though this assumption looks rather technical, when combined with \Cref{assump:HolderContinuity} and \Cref{assump:dissipative}, it will in fact impose smoothness constraints on $f$. We will discuss this observation in more detail in Section~\ref{sec:discuss}.

Next, we require an ergodicity condition on $X_3$.
\begin{assumption}\label{assump:ergodicity}
The distribution of $X_3(t)$ \eqref{eqn:FracLangevinGibbs} exponentially converges to its unique invariant distribution of \eqref{eqn:FracLangevinGibbs} in Wasserstein metric, i.e., for any $\lambda\geq 0$ such that $\lambda<\alpha$, there exist constants $C$ and $\lambda_*$ such that
\begin{align*}
\mathcal{W}_{\lambda}(\mu_{3t},\pi)\leq C \beta e^{- \lambda_* t /\beta},
\end{align*}
where $\mu_{3t}$ denotes the probability density of $X_3(t)$.
\end{assumption}
This assumption is also very common in SDE based MCMC algorithms \cite{chen2015convergence,simsekli17a}. Recently, \cite{xie2017ergodicity} has shown that the H\"{o}lder continuity and the fractional-dissipativity assumption with $\gamma=1$ on $b$ would be sufficient for proving geometric ergodicity. We believe that their results can be extended for the case where $\gamma<1$ and we leave it as a future work. In the unadjusted Langevin algorithm ($\alpha=2$), the constant $\lambda_*$ turns out to be the \emph{uniform spectral gap} associated with the Gibbs measure $\pi$ and it has shown to scale exponentially with respect to the dimension $d$ in the worst case \cite{raginsky17a}. We believe that a similar property holds in our case as well.  

% More information about the ergodicity of L\'{e}vy-driven SDEs can be found in \cite{masuda2007ergodicity}. 

Our next assumption is on the approximation quality of the function $b$ by $b_1$.
\begin{assumption}\label{assump:uniformlyBounded}
There exists a constant $L> 0$ such that $L<m$ and 
\begin{align*}
\sup_{x \in \mathbb{R}^d} \Vert c_{\alpha}\nabla f(x)+b(x,\alpha)\Vert\leq L,
\end{align*}
where the function $b$ is defined in \eqref{eqn:btrue}.
\end{assumption}
In Corollary 2 of \cite{simsekli17a}, it has been shown that \Cref{assump:uniformlyBounded} holds if the tails of $\pi$ vanish sufficiently quickly. On the other hand, the gap between $b$ and $b_1$ can be diminished even more if we consider a more sophisticated numerical approximation scheme, such as the one given in \cite{ccelik2012crank} (cf.\ Theorem 2 of \cite{simsekli17a}). %In our study, our focus is not on the numerical approximation of $b$, therefore we this gap as 

In our final condition, we assume that the fractional moments of $\pi$ is uniformly bounded. % \cite{raginsky17a}. 
\begin{assumption}\label{assump:momentsOfInvariant} There exists a constant $C>0$ such that 
\begin{align*}
\int_{\mathbb{R}^d}  \Vert x\Vert^r \pi(\mathrm{d}x) \leq C\frac{b+d/\beta}{m}
\end{align*}
for all $0\leq r\leq 2$. 
% \umut{Is this 2 or $\alpha$? Yes, in fact, we also need 2 involved for the proof of Lemma 5. Indeed, in the proof of unused lemma, this is finite for any $r\geq0$ but I don't want to involve the term $f^*\beta$ so I use the assumption instead of the proof}
\end{assumption}

% \subsection{Expected risk between $W$ and the minimum}

Now, we are ready to state our main result. 

\begin{theorem}\label{thm:ultimateTheorem} 
Under conditions \Cref{assump:boundedGradAtZero}-\Cref{assump:momentsOfInvariant} and for $0<\eta<\frac{m}{M^2}$, there exists a positive constant $C$ independent of $k$ and $\eta$ such that the following bound holds:
 % $\mathbb{E}[f(W^k)] - f^*\leq$
\begin{align*}
\mathbb{E}[f(W^k)] - f^*\leq& C\Bigg\{k^{1+\max\{\frac{1}{q},\gamma+\frac{\gamma}{q}\}}\eta^{\frac{1}{q}}  +  \frac{k^{1+\max\{\frac{1}{q},\gamma+\frac{\gamma}{q}\}}\eta^{\frac{1}{q}+\frac{\gamma}{\alpha q}}d}{\beta^{\frac{(q-1)\gamma}{\alpha q}}} + \frac{\beta b+d}{m}\exp(- \frac{\lambda_* k\eta} {\beta}) \Bigg\} + \frac{ Mc_\alpha^{-1}}{\beta^{\gamma+1}(1+\gamma)}\\
& + \frac1{\beta}\log\frac{(2e(b+\frac{d}{\beta}))^{\frac{d}{2}}\Gamma(\frac{d}{2}+1)\beta^d}{(dm)^{\frac{d}{2}}}.
\end{align*}
\end{theorem}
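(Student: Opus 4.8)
The plan is to bound each of the four brackets in the decomposition \eqref{eqn:decompRiskFrac} separately, matching them to $\mathcal{A}_1,\dots,\mathcal{A}_4$. A single reduction feeds all of them: every discrepancy of the form $\mathbb{E}f(U)-\mathbb{E}f(V)$ is converted into a \emph{fractional} Wasserstein distance. Using \Cref{assump:boundedGradAtZero} and \Cref{assump:HolderContinuity}, the fundamental theorem of calculus gives a pointwise bound $|f(x)-f(y)|\leq (B + M(\|x\|^\gamma+\|y\|^\gamma))c_\alpha^{-1}\|x-y\| + \tfrac{M c_\alpha^{-1}}{1+\gamma}\|x-y\|^{1+\gamma}$. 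Taking expectations under an optimal coupling and applying the H\"older and Minkowski inequalities with the conjugate exponents of \Cref{assump:conditionsOnParameters} bounds $\mathbb{E}f(U)-\mathbb{E}f(V)$ by a uniform fractional-moment factor times a power of $\mathcal{W}_q(\mathrm{law}(U),\mathrm{law}(V))$ with $q<\alpha$; the constraints $\gamma p<1$ and $\gamma q_1<1$ are precisely what keep the moment factor finite, and the needed moment bounds come from \Cref{assump:dissipative} (for $X_1,X_2$) and \Cref{assump:momentsOfInvariant} (for $\pi$).

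The core of the argument is a drift-comparison lemma for two SDEs driven by the \emph{same} $\alpha$-stable L\'evy motion (synchronous coupling), in the spirit of \cite{gairing2018transportation}. If $U$ and $V$ solve $\mathrm{d}U=b^U(U(t-))\mathrm{d}t+\beta^{-1/\alpha}\mathrm{d}L^\alpha$ and $\mathrm{d}V=b^V(V(t-))\mathrm{d}t+\beta^{-1/\alpha}\mathrm{d}L^\alpha$ with identical $L^\alpha$, then the difference $Z=U-V$ has \emph{no jumps}: the driving terms cancel and $Z$ obeys the pathwise ODE $\dot Z=b^U(U)-b^V(V)$. This is the decisive structural gain over the Brownian setting, since it lets me work entirely with fractional moments $\mathbb{E}\|Z\|^q$, $q<\alpha$, even though $L^\alpha$ has infinite variance. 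Differentiating $\|Z\|^q$ and splitting $b^U(U)-b^V(V)=[b^U(U)-b^U(V)]+[b^U(V)-b^V(V)]$, the first bracket is absorbed by the H\"older modulus of \Cref{assump:HolderContinuity} and the second is the \emph{drift gap}; a Gr\"onwall argument then bounds $\mathbb{E}\|Z(t)\|^q$ by a time integral of the drift-gap moments, with a polynomial-in-$t$ prefactor produced by the superlinear exponent $q-1+\gamma$.

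For the first bracket I take $b^U=b_1$, $b^V=b_2$; the drift gap on $[j\eta,(j+1)\eta)$ is $c_\alpha\|\nabla f(X_2(t))-\nabla f(X_2(j\eta))\|\leq M\|X_2(t)-X_2(j\eta)\|^\gamma$, and the one-step increment splits into a drift part of size $\mathcal{O}(\eta)$ (free of $d$ and $\beta$) and a noise part $\beta^{-1/\alpha}\Delta L^\alpha$ whose $q$-th moment carries the dimension and the factor $\beta^{-q/\alpha}$. Pushing both through the lemma and summing over the $k$ steps yields $\mathcal{A}_1$ from the drift part and $\mathcal{A}_2$ from the noise part, the exponent $\beta^{-(q-1)\gamma/(\alpha q)}$ reflecting the noise scaling. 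For the second bracket ($b^U=b_1$, $b^V=b$) the drift gap is uniformly bounded by $L<m$ via \Cref{assump:uniformlyBounded}, so the same lemma applies with a controlled Gr\"onwall constant. The third bracket is handled by the reduction above applied to $\mathcal{W}_\lambda(\mu_{3,k\eta},\pi)$, which \Cref{assump:ergodicity} bounds by $C\beta e^{-\lambda_* k\eta/\beta}$; multiplying the ergodicity factor $C\beta$ by the moment factor $(b+d/\beta)/m$ of \Cref{assump:momentsOfInvariant} gives the $(\beta b+d)/m$ prefactor of $\mathcal{A}_3$. Finally, the fourth bracket $\mathbb{E}f(\hat W)-f^*$ is the concentration of $\pi\propto e^{-\beta f}$ around its minimizer \`a la \cite{raginsky17a}: the smoothness bound $f(x)-f^*\leq \tfrac{Mc_\alpha^{-1}}{1+\gamma}\|x-x^\star\|^{1+\gamma}$ from \Cref{assump:HolderContinuity} supplies the $\beta^{-(\gamma+1)}$ correction (which reduces to the familiar $\beta^{-2}$ when $\gamma=1$), while a lower bound on the partition function obtained from \Cref{assump:dissipative} produces the logarithmic entropy term of $\mathcal{A}_4$.

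The main obstacle is the drift-comparison lemma itself. Unlike the diffusion case, It\^o's formula with finite second moments is unavailable, so the whole estimate must be carried out with fractional moments $q<\alpha$, and the H\"older (rather than Lipschitz) drift forces the exponent $q-1+\gamma$ into the Gr\"onwall step. Keeping each intermediate H\"older and Minkowski application finite is exactly what dictates the technical balance $\tfrac1p+\tfrac1q=\tfrac1{p_1}+\tfrac1{q_1}=1$, $q<\alpha$, $\gamma p<1$, $\gamma q_1<1$, $(q-1)p_1<1$ of \Cref{assump:conditionsOnParameters}, and it is this step that generates the $k^{1+\max\{1/q,\gamma+\gamma/q\}}$ prefactors, hence the worse dependence on $k$ and $\eta$ relative to ULA.
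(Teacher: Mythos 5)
Your overall architecture coincides with the paper's: the same four-term decomposition \eqref{eqn:decompRiskFrac}, the same fractional Wasserstein continuity reduction (Lemma~\ref{lemma:DifferenceExpectation}), the same synchronous-coupling observation that the difference of two SDEs driven by the \emph{same} $L^\alpha$ is jump-free and obeys a pathwise ODE (this is exactly Lemma~\ref{lemma:WassersteinFormula}, likewise credited to \cite{gairing2018transportation}), and the same treatment of the third and fourth brackets. Where you genuinely depart is the estimation of $\mathcal{W}_q(\mu_{1t},\mu_{2t})$ and $\mathcal{W}_q(\mu_{1t},\mu_{3t})$: you propose to exploit the closeness of the coupled processes via the splitting $b^U(U)-b^V(V)=[b^U(U)-b^U(V)]+[b^U(V)-b^V(V)]$ and a Gr\"onwall step, whereas the paper (Theorems~\ref{thm:boundOfWasserstein} and \ref{thm:WassersteinOfX1X3}) uses no contraction at all: it crudely bounds $\Vert X_1(s)-X_2(s)\Vert^{(q-1)p_1}\leq\Vert X_1(s)\Vert^{(q-1)p_1}+\Vert X_2(s)\Vert^{(q-1)p_1}$ and invokes the per-process fractional moment estimates (Lemmas~\ref{lemma:expectationBoundOfX1}, \ref{lemma:expecataionBound}, \ref{lemma:expectationBoundOfX3}), which grow linearly in $t$; the polynomial-in-$k$ prefactors come purely from summing these moments over steps.

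The gap is in your Gr\"onwall step, and it is a step that would fail as stated. Since the drift is only $\gamma$-H\"older with $\gamma<1$, setting $v(t)=(\mathbb{E}\Vert Z(t)\Vert^{q})^{1/q}$ your splitting yields (after Jensen and H\"older, using \Cref{assump:HolderContinuity}) the differential inequality $v'\leq M v^{\gamma}+g(t)$ with $v(0)=0$, where $g$ collects the drift-gap moments. Because $\int_{0^+}v^{-\gamma}\,\mathrm{d}v<\infty$, Osgood's uniqueness criterion fails: the homogeneous comparison ODE $v'=Mv^{\gamma}$, $v(0)=0$, has the nonzero maximal solution $((1-\gamma)Mt)^{1/(1-\gamma)}$, so no Gr\"onwall--Bihari argument can deliver your claimed conclusion that $\mathbb{E}\Vert Z(t)\Vert^{q}$ is bounded by a time integral of the drift-gap moments times a polynomial prefactor; any correct bound carries an \emph{additive, gap- and step-size-independent} term of order $(Mt)^{1/(1-\gamma)}$, which in particular does not vanish as $\eta\to0$ at fixed $t$. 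Your route is repairable: since \Cref{assump:conditionsOnParameters} forces $\gamma(1+q)<1$ (cf.\ the discussion in Section~\ref{sec:discuss}), one has $1/(1-\gamma)<1+1/q$, hence $(k\eta)^{1/(1-\gamma)}\leq C\,k^{1+1/q}\eta^{1/q}$ for $\eta$ bounded, and the spurious term fits inside the theorem's envelope --- but you must notice the term exists and verify this, which your sketch does not. Two smaller points: the $\frac{Mc_\alpha^{-1}}{1+\gamma}\Vert x-y\Vert^{1+\gamma}$ term in your pointwise bound is unusable in the third bracket (there is no synchronous coupling with $\pi$, and $1+\gamma\geq\alpha$ is possible, so $\mathcal{W}_{1+\gamma}$ need not be controlled by \Cref{assump:ergodicity}); the paper's Lemma~\ref{lemma:DifferenceExpectation} deliberately avoids any such term. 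And in the fourth bracket the log-partition lower bound comes from the H\"older smoothness around $x^\star$ (Lemmas~\ref{lemma:smoothnessInequality}, \ref{lemma:lowerBoundForNormalizedInvariant}), not from dissipativity, which enters only through the second moment of $\pi$ in the entropy comparison.
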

More explicit constants can be found in the supplementary document. 
Similar to ULA \cite{raginsky17a}, our bound grows with the number of iterations $k$. We note that this result sheds light on the explicit dependency of the error with respect to the algorithm parameters (e.g.\ step-size) for a fixed number of iterations, rather than explaining the asymptotic behavior when $k$ goes to infinity.  
In the next sections, we will provide an overview of the proof of this theorem along with some remarks and comparisons to ULA. 
%

% \umut{Remarks. Discuss $\lambda_*$.}

% \umut{Put the corollary here: iteration complexity}

%!TEX root = ./flmc_icml19_arxiv.tex

\section{Proof Overview}
\label{sec:proof_overview}

% In our proof strategy, we will follow a path where we decompose the error $\mathbb{E}f(W^k)-f^*$ in more manageable terms. First, instead of dealing with the discrete-time process $W^k$ directly, we will consider the continuous-time linear interpolation process $X_2$. Then, we will relate this process to $X_1$. Since it is more challenging to relate $X_1$ to $x^\star$, we will first relate $X_1$ to $X_3$, whose law converges to the Gibbs measure $\pi$. Finally, by following a similar approach to \cite{raginsky17a}, we will relate $X_3$ to $\pi$ and then $\pi$ to $f^\star$.  

% Formally, we decompose the expected suboptimality in the following manner:
% \begin{align}
% \nonumber\mathbb{E}f(W^k)-f^*=& \>\mathbb{E}f(X_2(k\eta)) - \mathbb{E}f(X_1(k\eta)) \\
% \nonumber & + \mathbb{E}f(X_1(k\eta)) - \mathbb{E}f(X_3(k\eta)) \\
% \nonumber & + \mathbb{E}f(X_3(k\eta)) - \mathbb{E}f(\hat{W}) \\
% & + \mathbb{E}f(\hat{W}) - f^*, \label{eqn:decompRiskFrac}
% \end{align}
% where $X_i(k\eta)$ with $i=1,2,3$ denotes the state reached by the three stochastic processes at time $k\eta$, and $\hat{W}$ is a random variable drawn from $\pi$. 

Our proof strategy consists of bounding each of the four terms in \eqref{eqn:decompRiskFrac} separately. 
Before bounding these terms, we first start by relating the expected discrepancies to the Wasserstein distance between two random processes. The result is formally presented in the following lemma and it extends the 2-Wasserstein continuity result given in \cite{polyanskiy2016wasserstein} to Wasserstein distance with fractional orders. 
% \begin{align*}
% \Big\vert\int g\text{d}\mu - \int g\text{d}\nu\Big\vert\leq& C \mathcal{W}_{q}(\mu, \nu),
% \end{align*}
% where $q<2$. 
%
% \subsection{Expected risk between two random variables} 
%
\begin{lemma} \label{lemma:DifferenceExpectation}Let $V$ and $W$ be two random variables on $\mathbb{R}^d$ which have $\mu$ and $\nu$ as the probability measures and let $g$ be a function in $C^1(\mathbb{R}^d,\mathbb{R})$. Assume that for some $c_1>0,c_2\geq 0$ and $0\leq\gamma<1$,
\begin{align*}
\Vert \nabla g(x)\Vert\leq c_1\Vert x\Vert^{\gamma} + c_2,&&\forall x\in\mathbb{R}^d
\end{align*}
and $\max\Big\{\Big(\mathbb{E}\Vert W\Vert^{\gamma p}\Big)^{\frac{1}{p}},\Big(\mathbb{E}\Vert V\Vert^{\gamma p}\Big)^{\frac{1}{p}}\Big\}<\infty$. Then, the following bound holds:
\begin{align*}
\Big\vert\int g\text{d}\mu - \int g\text{d}\nu\Big\vert\leq& C \> \mathcal{W}_{q}(\mu, \nu),
\end{align*}
for some $C>0$.
%\Big(c_1\Big(\mathbb{E}_{\mathbf{P}}\Vert W\Vert^{\gamma p}\Big)^{\frac{1}{p}}+c_1\Big(\mathbb{E}_{\mathbf{P}}\Vert V\Vert^{\gamma p}\Big)^{\frac{1}{p}}\\
% &+c_2\Big)
\end{lemma}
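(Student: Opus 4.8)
The plan is to prove the bound for one arbitrary coupling of $\mu$ and $\nu$ and then optimize over couplings at the very end. First I would fix any coupling $(V,W)$ with $V\sim\mu$ and $W\sim\nu$, and express the pointwise difference via the fundamental theorem of calculus along the segment joining $W$ to $V$. Since $g\in C^1(\mathbb{R}^d,\mathbb{R})$, the map $t\mapsto g(W+t(V-W))$ is continuously differentiable on $[0,1]$, so pathwise
\[
g(V) - g(W) = \int_0^1 \langle \nabla g(W + t(V-W)),\, V - W \rangle\, \mathrm{d}t .
\]

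Next I would control the integrand uniformly in $t$. Applying Cauchy--Schwarz, the growth hypothesis $\Vert\nabla g(x)\Vert \leq c_1\Vert x\Vert^{\gamma}+c_2$, the bound $\Vert W+t(V-W)\Vert \leq (1-t)\Vert W\Vert + t\Vert V\Vert \leq \Vert W\Vert+\Vert V\Vert$, and the subadditivity $(a+b)^{\gamma}\leq a^{\gamma}+b^{\gamma}$ valid for $0\leq\gamma<1$, the factor $\Vert\nabla g(W+t(V-W))\Vert$ becomes independent of $t$ and I obtain
\[
\vert g(V)-g(W)\vert \leq \big( c_1(\Vert V\Vert^{\gamma}+\Vert W\Vert^{\gamma}) + c_2 \big)\,\Vert V-W\Vert .
\]
Taking expectations and splitting the product with H\"{o}lder's inequality for the conjugate pair $(p,q)$ supplied by \Cref{assump:conditionsOnParameters} yields
\[
\big\vert \mathbb{E}[g(V)-g(W)] \big\vert \leq \big\Vert c_1(\Vert V\Vert^{\gamma}+\Vert W\Vert^{\gamma})+c_2 \big\Vert_{L^p}\cdot \big(\mathbb{E}\Vert V-W\Vert^{q}\big)^{1/q}.
\]
By Minkowski's inequality the first factor is at most $c_1(\mathbb{E}\Vert V\Vert^{\gamma p})^{1/p} + c_1(\mathbb{E}\Vert W\Vert^{\gamma p})^{1/p} + c_2$, which is finite by the moment hypothesis; I would denote this quantity by $C$.

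The last step is to pass to the infimum over couplings. The crucial observation is that the left-hand side $\vert\int g\,\mathrm{d}\mu-\int g\,\mathrm{d}\nu\vert$ does not depend on the coupling, and that the constant $C$ is built only from the marginal moments $\mathbb{E}\Vert V\Vert^{\gamma p}$ and $\mathbb{E}\Vert W\Vert^{\gamma p}$, hence is also coupling-independent. Therefore only $(\mathbb{E}\Vert V-W\Vert^{q})^{1/q}$ varies, and taking the infimum over all couplings gives exactly $\mathcal{W}_{q}(\mu,\nu)$, establishing the claim.

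I expect the subtlety requiring the most care to be precisely this coupling-independence of the prefactor: the argument is legitimate only because $C$ depends on the marginals alone, so that the infimum may be moved inside. I would also flag that $g$ itself need not be $\mu$- or $\nu$-integrable (the growth on $\nabla g$ only controls $\Vert x\Vert^{1+\gamma}$-type moments, which the hypotheses do not assume), so the proof must genuinely operate on the coupled difference $g(V)-g(W)$ rather than on the two integrals separately. Finally, the role of \Cref{assump:conditionsOnParameters} is exactly to make these manipulations consistent: $\gamma p<1$ keeps the $L^p$ factor finite under the stated fractional-moment assumption, while $q=p/(p-1)<\alpha$ ensures $\mathcal{W}_q$ is the appropriate (finite) metric for the $\alpha$-stable setting; if $\mathcal{W}_q(\mu,\nu)=\infty$ the inequality holds trivially.
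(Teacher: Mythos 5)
Your proof is correct and follows essentially the same route as the paper's: the fundamental theorem of calculus along the segment joining the coupled variables, Cauchy--Schwarz, subadditivity of $x\mapsto x^{\gamma}$ for $0\leq\gamma<1$, then H\"{o}lder and Minkowski with the conjugate pair $(p,q)$. The only cosmetic difference is that you work with an arbitrary coupling and infimize at the end, whereas the paper invokes a coupling attaining $\mathcal{W}_{q}(\mu,\nu)$; your variant is marginally cleaner since it sidesteps the existence of an optimal coupling, and the coupling-independence of the prefactor that you flag (it is built from marginal moments only) is precisely what makes both versions legitimate.
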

Lemma~\ref{lemma:DifferenceExpectation} lets us upperbound the first three terms of the right hand side of \eqref{eqn:decompRiskFrac} by the Wasserstein distance between the appropriate stochastic processes, respectively $\mathcal{W}_{q}(\mu_{1t}, \mu_{2t})$, $\mathcal{W}_{q}(\mu_{1t}, \mu_{3t})$, and $\mathcal{W}_{q}(\mu_{3t}, \pi)$, where $\mu_{it}$ denotes the law of $X_i(t)$. 

The term $\mathcal{W}_{q}(\mu_{3t}, \pi)$ is related to the ergodicity of the process \eqref{eqn:FracLangevinGibbs}  and it has been shown that this distance diminishes exponentially for a considerably large class of L\'{e}vy diffusions \cite{masuda2007ergodicity,xie2017ergodicity}. On the other hand, the term $\mathcal{W}_{q}(\mu_{1t}, \mu_{3t})$ is related to the numerical approximation of the Riesz derivatives, which is analyzed in \cite{simsekli17a}. Therefore, in this study, we use the assumptions \Cref{assump:ergodicity} and \Cref{assump:uniformlyBounded} for dealing with these terms, and focus on the term $\mathcal{W}_{q}(\mu_{1t}, \mu_{2t})$, which is related to the so-called `weak-error' of the Euler scheme for the SDE \eqref{sde:cont_euler}. The existing estimates for such weak-errors are typically of order $C\eta^a$, where $a<1$ and $C$ is a constant that grows exponentially with $t$ \cite{mikulevivcius2011rate}. The exponential growth with $t$ is prohibitive in our case and one of our main technical contributions is that, in the sequel, we will prove a bound that grows \emph{polynomially} with $t$, which substantially improves over the one with exponential growth.

% \subsection{Wasserstein distance between two SDEs driven by L\'{e}vy Motion}

We start by bounding $\mathcal{W}_{q}(\mu_{1t}, \mu_{2t})$ and $\mathcal{W}_{q}(\mu_{1t}, \mu_{3t})$. In order to do so, we prove the following lemma, which will be the key for our analysis.
\begin{lemma}\label{lemma:WassersteinFormula}
For $\lambda\in(1,\infty)$, $i,j\in\{1,2,3\}$ and $i\neq j$, we have the following identity:
\begin{align*}
\mathcal{W}_{\lambda}(\mu_{it}, \mu_{jt})= \inf\Big\{\Big(\mathbb{E}\Big[\int_0^t \lambda\,\Vert \Delta X_{ij}(s)\Vert^{\lambda -2} \langle \Delta X_{ij}(s) , \Delta b_{ij}(s-)\rangle\text{d}s\Big]\Big)^{1/\lambda}\Big\},
\end{align*}
where the infimum is taken over the couplings whose marginals are $\mu_{it}$ and $\mu_{jt}$ and 
\begin{align*}
\Delta X_{ij}(s) &\triangleq X_i(s) -X_j(s), \quad\Delta b_{ij}(s-) \triangleq b_i (X_i(s-),\alpha)-b_j (X_j(s-),\alpha).
\end{align*}
\end{lemma}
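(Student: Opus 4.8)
The plan is to represent the two laws through a \emph{synchronous coupling} of the driving noise: drive both SDEs by one and the same $\alpha$-stable Lévy motion $L^\alpha$ and couple the initial conditions so that $\Delta X_{ij}(0)=0$. The point of doing so is that the stochastic term $\beta^{-1/\alpha}\mathrm{d}L^\alpha(t)$ is then \emph{identical} for $X_i$ and $X_j$, so subtracting the two SDEs cancels it exactly: by linearity of the stochastic integral against the common $L^\alpha$, the difference obeys the \emph{noise-free} equation $\mathrm{d}\Delta X_{ij}(s)=\Delta b_{ij}(s-)\,\mathrm{d}s$. In particular $\Delta X_{ij}$ is a continuous process of finite variation — it inherits neither a jump part nor a martingale part from $L^\alpha$ — which is precisely what makes the subsequent calculus elementary.

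Given this, I would apply the change-of-variables formula to $x\mapsto\|x\|^\lambda$ along $\Delta X_{ij}$. Since $\Delta X_{ij}$ is absolutely continuous, the Itô formula collapses to the ordinary chain rule (no second-order or jump-correction terms survive), and for $\lambda\in(1,\infty)$ the map $\|\cdot\|^\lambda$ is $C^1$ with $\nabla\|x\|^\lambda=\lambda\|x\|^{\lambda-2}x$, the gradient extending continuously by $0$ to the origin exactly because $\lambda>1$. Integrating on $[0,t]$ and using $\Delta X_{ij}(0)=0$ yields the pathwise identity $\|\Delta X_{ij}(t)\|^\lambda=\int_0^t \lambda\,\|\Delta X_{ij}(s)\|^{\lambda-2}\langle\Delta X_{ij}(s),\Delta b_{ij}(s-)\rangle\,\mathrm{d}s$; taking expectations identifies $\mathbb{E}\|X_i(t)-X_j(t)\|^\lambda$ with the integral appearing in the statement.

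It remains to connect this to the Wasserstein distance. For every admissible coupling the endpoint pair $(X_i(t),X_j(t))$ is a coupling of $\mu_{it}$ and $\mu_{jt}$, so by the definition of $\mathcal{W}_\lambda$ we have $\mathcal{W}_\lambda(\mu_{it},\mu_{jt})^\lambda\le\mathbb{E}\|X_i(t)-X_j(t)\|^\lambda$, and substituting the integral identity gives $\mathcal{W}_\lambda(\mu_{it},\mu_{jt})\le(\mathbb{E}[\int_0^t\cdots\mathrm{d}s])^{1/\lambda}$ for each such coupling; taking the infimum over admissible couplings then produces the stated formula. Conversely, restricting the infimum defining $\mathcal{W}_\lambda$ to those couplings that are realizable at the process level reproduces the right-hand side, which is how I would argue the reverse inequality and hence equality.

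The step I expect to be the main obstacle is making the change-of-variables rigorous in the regime $\lambda\in(1,2)$, where $\|\cdot\|^\lambda$ has a singular Hessian at the origin: the usual $C^2$ Itô formula does not apply, and one must instead invoke that $\Delta X_{ij}$ is of finite variation so that only $C^1$ regularity and a Stieltjes/absolutely-continuous chain rule are needed, while verifying a.e.\ differentiability of $s\mapsto\|\Delta X_{ij}(s)\|^\lambda$ and controlling the set $\{\Delta X_{ij}(s)=0\}$. A secondary subtlety is the identity-versus-inequality issue above: the genuinely nontrivial direction is that the synchronous, common-noise couplings suffice to attain the optimal transport cost, although for the later use of this lemma only the upper bound $\mathcal{W}_\lambda(\mu_{it},\mu_{jt})^\lambda\le\mathbb{E}[\int_0^t\cdots\mathrm{d}s]$ for a single such coupling is actually required. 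Passing the expectation through the time integral (Fubini) and the integrability of the integrand are minor points to be settled later with the moment and dissipativity assumptions.
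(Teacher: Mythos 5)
Your proposal is correct and takes essentially the same route as the paper: the paper's proof likewise (implicitly) drives both SDEs by a common L\'{e}vy motion so the noise cancels in the difference $Y(t)=X_i(t)-X_j(t)$, applies the ordinary chain rule to the $C^1$ function $F_\lambda(y)=\Vert y\Vert^{\lambda}$ along the resulting finite-variation path, integrates over $[0,t]$ using $Y(0)=0$, and then invokes the definition of $\mathcal{W}_\lambda$ as an infimum over couplings. The two subtleties you flag --- justifying the chain rule with only $C^1$ regularity when $\lambda\in(1,2)$, and the fact that equality (rather than just the upper bound actually used downstream) requires process-level couplings to suffice --- are precisely the points the paper's proof passes over without comment, so your account is, if anything, more careful than the original.
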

This result extends the recent study \cite{gairing2018transportation} and lets us relate the Wasserstein distance between the distributions of the random processes to their drift functions.

% \subsection{Expected discrepancy between $X_1$ and $X_2$}
By using Lemma~\ref{lemma:WassersteinFormula}, we start by bounding the Wasserstein distance between $\mu_{1t}$ and $\mu_{2t}$. The result is summarized in the following theorem. % \umut{mention the weak error results.}
\begin{theorem}\label{thm:boundOfWassersteinShort}
Assume that the following condition holds: $0<\eta\leq\frac{m}{M^2}$. 
% \begin{align*}
%
% $(q-1)p_1<1$ and $\gamma q_1<1$,
% \end{align*}
Then, we have
\begin{align*}
\mathcal{W}_{q}^{q}(\mu_{1t}, \mu_{2t}) \leq C q \> \mathrm{Poly}(k,\eta,\beta,d),
% q\eta\Big(\frac{k(k+1)}{2}[P_1(\eta)P_2(\eta) + P_1(\eta)Q_2 + P_2(\eta)Q_1(\eta)] + kQ_1(\eta)Q_2\Big).
\end{align*}
for some $C>0$. 
% Note that, on the right hand side of this inequality, $P_1(\eta)P_2(\eta) + P_1(\eta)Q_2(\eta) + P_2(\eta)Q_1(\eta) = \mathcal{O}(\eta^{\frac{(q-1)\gamma}{2}})$ and $Q_1(\eta)Q_2(\eta)=\mathcal{O}(1)$.
\end{theorem}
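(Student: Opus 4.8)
The plan is to apply Lemma~\ref{lemma:WassersteinFormula} with $i=1$, $j=2$, $\lambda=q$, and to upper-bound the infimum by evaluating it at the \emph{synchronous coupling}, in which $X_1$ and $X_2$ are driven by the same $\alpha$-stable L\'evy motion $L^\alpha$ and started from the same point, so that $\Delta X_{12}(0)=0$. Since both SDEs carry the identical, state-independent noise coefficient $\beta^{-1/\alpha}$, the driving noise cancels in the difference, $\mathrm{d}\,\Delta X_{12}(s)=\Delta b_{12}(s-)\,\mathrm{d}s$, so $\Delta X_{12}$ is absolutely continuous in $s$ and
\begin{align*}
\mathcal{W}_q^q(\mu_{1t},\mu_{2t}) \le \mathbb{E}\Big[\int_0^t q\,\Vert \Delta X_{12}(s)\Vert^{q-2}\langle \Delta X_{12}(s),\Delta b_{12}(s-)\rangle\,\mathrm{d}s\Big].
\end{align*}
Writing $u(s)\triangleq\mathbb{E}\Vert \Delta X_{12}(s)\Vert^q$ and $j=\lfloor s/\eta\rfloor$, the drift gap is $\Delta b_{12}(s-)=-c_\alpha\nabla f(X_1(s))+c_\alpha\nabla f(X_2(j\eta))$, and the whole problem reduces to controlling this single integral; note that the factor $q$ in the claimed bound is inherited directly from the prefactor $q\Vert\cdot\Vert^{q-2}$ here.

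Next I would bound the integrand pointwise. Since $c_\alpha\nabla f$ is $\gamma$-H\"older (\Cref{assump:HolderContinuity}), Cauchy--Schwarz together with $\Vert X_1(s)-X_2(j\eta)\Vert\le\Vert\Delta X_{12}(s)\Vert+\Vert X_2(s)-X_2(j\eta)\Vert$ and subadditivity of $r\mapsto r^\gamma$ give
\begin{align*}
\langle \Delta X_{12}(s),\Delta b_{12}(s-)\rangle \le M\Vert\Delta X_{12}(s)\Vert^{1+\gamma} + M\Vert\Delta X_{12}(s)\Vert\,\Vert X_2(s)-X_2(j\eta)\Vert^{\gamma}.
\end{align*}
Multiplying by $q\Vert\Delta X_{12}(s)\Vert^{q-2}$, taking expectations, applying Jensen to the first term (exponent $(q-1+\gamma)/q<1$), and applying Young's inequality with the conjugate pair $(p_1,q_1)$ to the second, I split the second term into factors $\Vert\Delta X_{12}\Vert^{(q-1)p_1}$ and $\Vert X_2(s)-X_2(j\eta)\Vert^{\gamma q_1}$. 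Here \Cref{assump:conditionsOnParameters} is essential: it guarantees $(q-1)p_1<1$ and $\gamma q_1<1$, so a second Jensen step turns the first factor into $u(s)^{(q-1)p_1/q}$ with exponent $<1$, while the second factor becomes a genuine additive source. Collecting constants yields
\begin{align*}
u(t) \le S(t) + A\int_0^t\big(u(s)^{\theta_1}+u(s)^{\theta_2}\big)\,\mathrm{d}s,\qquad \theta_1=\tfrac{q-1+\gamma}{q},\ \ \theta_2=\tfrac{(q-1)p_1}{q},
\end{align*}
with $\theta_1,\theta_2\in(0,1)$, $A=\mathcal{O}(qM)$, and $S(t)=\mathcal{O}(q)\int_0^t\mathbb{E}\Vert X_2(s)-X_2(\lfloor s/\eta\rfloor\eta)\Vert^{\gamma q_1}\,\mathrm{d}s$.

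Then I would bound the source $S(t)$. On each interval $[j\eta,(j+1)\eta)$ the drift of $X_2$ is the constant $-c_\alpha\nabla f(W^j)$, so $X_2(s)-X_2(j\eta)$ decomposes into a drift part of norm $\le\eta\,(M\Vert W^j\Vert^{\gamma}+B)$ (via \Cref{assump:boundedGradAtZero} and \Cref{assump:HolderContinuity}) and a stable part $\beta^{-1/\alpha}(L^\alpha(s)-L^\alpha(j\eta))$. The $\gamma q_1$-th moment of the stable part is finite because $\gamma q_1<1<\alpha$, and by the stationarity and scaling of $L^\alpha$ it is $\mathcal{O}(\eta^{\gamma q_1/\alpha}\beta^{-\gamma q_1/\alpha}\mathrm{Poly}(d))$; the drift part requires only a uniform low-order moment bound $\sup_{j\le k}\mathbb{E}\Vert W^j\Vert^{r}<\infty$, which follows from the dissipativity assumption \Cref{assump:dissipative} by a standard iterate-moment argument. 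Summing over the $k$ intervals gives $S(t)\le Cq\,\mathrm{Poly}(k,\eta,\beta,d)$.

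The hard part is the final step, which is the whole point of the argument. Bounding $u^{\theta_i}\le 1+u^{\theta}$ with $\theta=\max(\theta_1,\theta_2)<1$ puts the inequality into Bihari form $u(t)\le\tilde S(t)+A'\int_0^t u(s)^{\theta}\,\mathrm{d}s$ with $\tilde S$ nondecreasing and polynomial in $t=k\eta$, and Bihari's inequality then gives $u(t)\le[\tilde S(t)^{1-\theta}+(1-\theta)A't]^{1/(1-\theta)}$, which grows only \emph{polynomially} in $t$. A naive Cauchy--Schwarz/Gr\"onwall treatment would instead leave a term linear in $u$ and produce a constant growing \emph{exponentially} in $t$, precisely the estimate we must avoid; it is the strict inequality $\gamma<1$ (forcing $\theta_1<1$) that makes the nonlinearity sublinear and upgrades exponential to polynomial growth, which is also why the Lipschitz/ULA regime $\gamma=1$ is excluded. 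Combining the Bihari bound with the source estimate gives $\mathcal{W}_q^q(\mu_{1t},\mu_{2t})\le u(t)\le Cq\,\mathrm{Poly}(k,\eta,\beta,d)$, as claimed.
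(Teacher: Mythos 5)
Your proposal is correct in substance, but it closes the estimate by a genuinely different mechanism than the paper. Both arguments start from Lemma~\ref{lemma:WassersteinFormula} under the synchronous coupling, split $[0,t]$ into the $k$ intervals $[j\eta,(j+1)\eta)$, and use Cauchy--Schwarz, the H\"older continuity of \Cref{assump:HolderContinuity}, and the exponent conditions $(q-1)p_1<1$, $\gamma q_1<1$ of \Cref{assump:conditionsOnParameters}. The divergence is in what happens next: you keep the \emph{difference} process, set $u(s)=\mathbb{E}\Vert X_1(s)-X_2(s)\Vert^q$, and close a sublinear integral inequality $u(t)\le \tilde S(t)+A'\int_0^t u(s)^{\theta}\,\mathrm{d}s$ with $\theta<1$, converting sublinearity into polynomial growth via Bihari. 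The paper never closes an inequality in the difference at all: after the H\"older split it simply majorizes $\mathbb{E}\Vert X_1(s)-X_2(s)\Vert^{(q-1)p_1}$ by $\mathbb{E}\Vert X_1(s)\Vert^{(q-1)p_1}+\mathbb{E}\Vert X_2(s)\Vert^{(q-1)p_1}$ (legitimate since $(q-1)p_1<1$), does the same for the gradient factor, and then feeds in the dissipativity-based a priori moment bounds of Lemma~\ref{lemma:expectationBoundOfX1} and Lemma~\ref{lemma:expecataionBound}, which grow only \emph{linearly} in $s$ (respectively in $j$, using $\eta\le m/M^2$); summing over $j$ yields the fully explicit bound $q\eta\bigl(k^2P_1(\eta)P_2(\eta)+k^{1+1/p_1}P_1(\eta)Q_2+k^{1+1/q_1}P_2(\eta)Q_1(\eta)+kQ_1(\eta)Q_2\bigr)$. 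So in the paper the polynomial growth comes from polynomial a priori moments of each process separately, not from any Gr\"onwall-type argument, and the payoff is explicit constants and the transparent $k^2\eta$ leading term used in Corollary~\ref{cor:BoundWasserstein}; your route is structurally cleaner and isolates \emph{why} exponential growth is avoided (the strict sublinearity forced by $\gamma<1$ and $(q-1)p_1<1$), and it is potentially sharper because the source is the discretization gap $\Vert X_2(s)-X_2(j\eta)\Vert$ rather than the raw sizes of the processes. Three caveats on your version: first, Bihari carries the usual zero-data floor, so even as $\tilde S\to 0$ your bound retains the term $\bigl((1-\theta)A't\bigr)^{1/(1-\theta)}$, meaning it does not actually beat the paper's bound as $\eta\to0$ at fixed $t=k\eta$ (both remain $\mathrm{Poly}$, which is all the theorem claims); second, your appeal to a uniform iterate-moment bound $\sup_{j\le k}\mathbb{E}\Vert W^j\Vert^{r}<\infty$ should be replaced by the linearly-growing-in-$j$ bound that Lemma~\ref{lemma:expecataionBound} actually provides --- a genuinely uniform-in-$j$ fractional-moment bound is neither proved in the paper nor needed for $\mathrm{Poly}(k)$; third, Bihari requires $u$ to be a priori finite and locally bounded, which you should note follows from $q<\alpha$ together with $\gamma q<\gamma p<1$ and the moment lemmas.
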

The full statement of the proof and the explicit constants are provided in the supplementary document. By only considering the leading terms of the bound provided in Theorem~\ref{thm:boundOfWassersteinShort}, we obtain the following corollary.
\begin{corollary}\label{cor:BoundWasserstein} 
Suppose that $0<\eta<\min\big\{1,\frac{m}{M^2}\big\}$. Then,
% , $(q-1)p_1<1$ and $\gamma q_1<1$
 the bound for the Wasserstein distance between the laws of $X_1(t)$ and $X_2(t)$ can be written as follows:
\begin{align*}
\mathcal{W}^q_{q}(\mu_{1t}, \mu_{2t})\leq &C(k^2\eta + k^2\eta^{1+\gamma/\alpha}\beta^{-(q-1)\gamma/\alpha}d).
\end{align*}
% If, on the other hand, $q-1\leq \gamma$, the bound becomes:
% \begin{align*}
% \mathcal{W}^q_{q}(\mu_{1t}, \mu_{2t}) \leq&C(k^2\eta + k^2\eta^{1+(q-1)/\alpha}\beta^{-(q-1)^2/\alpha}d).
% \end{align*}
% where the quantities $O_1$, $O_2$, $O_3$ and $O_4$ satisfy the following properties: $O_1=\mathcal{O}(k^2\eta^{1+p_{1,min}})$, $O_2=\mathcal{O}(k^{1+1/p_1}\eta^{1+p_{2,min}})$, $O_3=\mathcal{O}(k^{1+1/q_1}\eta^{1+p_{3,min}})$ and $O_4=\mathcal{O}(k\eta)$.
\end{corollary}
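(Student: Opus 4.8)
The plan is to obtain Corollary~\ref{cor:BoundWasserstein} purely as a simplification of the explicit polynomial bound established in Theorem~\ref{thm:boundOfWassersteinShort}: once that bound is written out as a finite sum of monomials of the form $C_i\, k^{a_i}\eta^{b_i}\beta^{-c_i}d^{e_i}$, the only genuinely new ingredient is the strengthened step-size restriction $0<\eta<\min\{1,\frac{m}{M^2}\}$, which is precisely what lets us compare the monomials and discard all but the two leading ones. First I would reinstate the full statement of Theorem~\ref{thm:boundOfWassersteinShort} from the supplement, keeping track of every exponent $(a_i,b_i,c_i,e_i)$.

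The second step exploits $\eta<1$: for any two exponents $b\ge b'$ one has $\eta^{b}\le\eta^{b'}$, so within each family of monomials sharing the same power of $k$ the dominant contribution is the one carrying the \emph{smallest} power of $\eta$. Tracing the iterated-integral structure used in the proof of Theorem~\ref{thm:boundOfWassersteinShort} --- the time integral over $[0,k\eta]$ produced by Lemma~\ref{lemma:WassersteinFormula} composed with the accumulation of the per-step discretization error over the $k$ subintervals --- shows that the largest power of $k$ appearing is $k^{2}$. Restricting to these $k^{2}$-monomials and selecting, as just argued, the smallest admissible $\eta$-exponents leaves exactly two survivors: the term $k^{2}\eta$, coming from the drift part of the interpolation increment $X_2(s)-X_2(\lfloor s/\eta\rfloor\eta)$, and the term $k^{2}\eta^{1+\gamma/\alpha}\beta^{-(q-1)\gamma/\alpha}d$, coming from its $\alpha$-stable noise part, which is where the factors $\beta^{-1/\alpha}$ and the dimension $d$ (through the fractional moment of the L\'evy increment) enter. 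Every remaining monomial either carries a strictly smaller power of $k$ or, at equal power of $k$, a strictly larger power of $\eta$, and is therefore dominated; I would absorb all of them, together with the prefactor $q$, into the constant $C$.

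The main obstacle is the exponent bookkeeping: one has to verify, across all four variables $k,\eta,\beta,d$ simultaneously, that the two retained monomials really do dominate the rest. This is where \Cref{assump:conditionsOnParameters} is used --- the constraints $q<\alpha$, $\gamma p<1$, $\gamma q_1<1$ and $(q-1)p_1<1$ guarantee that the fractional moments invoked in Theorem~\ref{thm:boundOfWassersteinShort} are finite and pin down the admissible $\eta$- and $\beta$-exponents, so that the comparison goes in the claimed direction. One must also be careful that ``leading'' is meant in the small-step-size regime $\eta\to0$ (with $k,\beta,d$ in their relevant ranges); a subleading monomial with a larger $\eta$-exponent could otherwise appear larger for a badly chosen combination of the other parameters, so the dominance has to be stated relative to $\eta<1$ exactly as in the hypothesis.
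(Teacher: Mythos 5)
Your proposal is correct and follows essentially the same route as the paper: the paper's proof of Corollary~\ref{cor:BoundWasserstein} likewise takes the explicit monomial bound of Theorem~\ref{thm:boundOfWassersteinShort} (given in full in the supplement via the quantities $P_1,P_2,Q_1,Q_2$) and, using $\eta<1$ together with $k\geq1$, $\beta\geq1$, retains the maximal power of $k$, the minimal power of $\eta$, the minimal power of $\beta^{-1}$, and the maximal power of $d$, absorbing all dominated monomials into $C$. The one bookkeeping step you leave implicit is made explicit there: the surviving $\eta$-exponent among the $\beta$-containing terms is a priori $1+\min\{\gamma,q-1\}/\alpha$, and \Cref{assump:conditionsOnParameters} gives $\gamma<1/p=(q-1)/q<q-1$, hence $\min\{\gamma,q-1\}=\gamma$, which yields exactly your two retained monomials.
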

By combining Corollary~\ref{cor:BoundWasserstein} with Lemma~\ref{lemma:DifferenceExpectation}, we obtain the following result, which provides an upperbound for the first term of the right hand side of \eqref{eqn:decompRiskFrac}.
\begin{corollary}\label{cor:distanceFromX1ToX2_2} For $0<\eta<\frac{m}{M^2}$, there exists a constant $C>0$ such that the following bound holds: 
\begin{align*}
\big\vert\mathbb{E}[f(X_1(k\eta))] - \mathbb{E}[f(X_2(k\eta))]\big\vert \leq C\Big(k^{1+\frac{1}{q}}\eta^{\frac{1}{q}}  +  k^{1+\frac{1}{q}}\eta^{\frac{1}{q}+\frac{\gamma}{\alpha q}}\beta^{-\frac{(q-1)\gamma}{\alpha q}}d\Big).
\end{align*}
% %
% If $q-1\leq\gamma$ we have
% \begin{align*}
% \big\vert\mathbb{E}[f(X_1(k\eta))] - \mathbb{E}[f(X_2(k\eta))]\big\vert \phantom{asdasasdasdasdasd} \\
% \leq C\Big(k^{1+\frac{1}{q}}\eta^{\frac{1}{q}}  +  k^{1+\frac{1}{q}}\eta^{\frac{1}{q}+\frac{q-1}{\alpha q}}\beta^{-\frac{(q-1)^2}{\alpha q}}d\Big).
% \end{align*}
\end{corollary}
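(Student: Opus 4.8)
The plan is to combine the two ingredients highlighted just before the statement: Lemma~\ref{lemma:DifferenceExpectation}, which trades the gap between the two expectations for a $q$-Wasserstein distance, and Corollary~\ref{cor:BoundWasserstein}, which controls that distance. Throughout I fix $t=k\eta$ and write $\mu_{1t},\mu_{2t}$ for the laws of $X_1(k\eta)$ and $X_2(k\eta)=W^k$; the exponents $p,q$ are the conjugate pair from \Cref{assump:conditionsOnParameters}.

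First I would verify the hypotheses of Lemma~\ref{lemma:DifferenceExpectation} with $g=f$. The required gradient-growth bound follows by combining \Cref{assump:boundedGradAtZero} and \Cref{assump:HolderContinuity}: setting $y=0$ in the H\"older condition gives $c_\alpha\|\nabla f(x)\|\le c_\alpha\|\nabla f(0)\|+M\|x\|^\gamma\le B+M\|x\|^\gamma$, whence $\|\nabla f(x)\|\le (M/c_\alpha)\|x\|^\gamma+(B/c_\alpha)$. This is precisely the hypothesis of Lemma~\ref{lemma:DifferenceExpectation} with $c_1=M/c_\alpha$, $c_2=B/c_\alpha$, and the same $\gamma\in[0,1)$.

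Next I must confirm the moment condition $\max\{(\mathbb{E}\|X_1(k\eta)\|^{\gamma p})^{1/p},(\mathbb{E}\|X_2(k\eta)\|^{\gamma p})^{1/p}\}<\infty$. Here \Cref{assump:conditionsOnParameters} imposes $\gamma p<1<\alpha$, so the exponent $\gamma p$ stays strictly below the integrability threshold $\alpha$ of the driving $\alpha$-stable L\'evy motion, while the dissipativity \Cref{assump:dissipative} prevents the drift from destabilizing the two processes; together these render the fractional moments finite and, crucially, bounded uniformly in $k$ and in $\eta$ over the admissible range $0<\eta<m/M^2$. I expect this uniform moment bound to be the main obstacle, since it is exactly what allows the constant emitted by Lemma~\ref{lemma:DifferenceExpectation} to be chosen independent of $k$ and $\eta$. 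I would isolate it as an auxiliary lemma, proved by a standard drift/dissipativity estimate combined with the fact that $\mathbb{E}|X|^p<\infty$ iff $p<\alpha$ for an $\alpha$-stable $X$.

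With both hypotheses in place, Lemma~\ref{lemma:DifferenceExpectation} gives $|\mathbb{E}f(X_1(k\eta))-\mathbb{E}f(X_2(k\eta))|\le C\,\mathcal{W}_q(\mu_{1t},\mu_{2t})$ for a $k$- and $\eta$-independent $C$. Writing $\mathcal{W}_q=(\mathcal{W}_q^q)^{1/q}$ and feeding in Corollary~\ref{cor:BoundWasserstein} at $t=k\eta$ yields
\[
\mathcal{W}_q(\mu_{1t},\mu_{2t})\le C\bigl(k^2\eta+k^2\eta^{1+\gamma/\alpha}\beta^{-(q-1)\gamma/\alpha}d\bigr)^{1/q}.
\]
Since $q>1$, the map $u\mapsto u^{1/q}$ is subadditive, so $(a+b)^{1/q}\le a^{1/q}+b^{1/q}$ splits the right-hand side into $k^{2/q}\eta^{1/q}$ and $k^{2/q}\eta^{1/q+\gamma/(\alpha q)}\beta^{-(q-1)\gamma/(\alpha q)}d^{1/q}$, matching the $\eta$- and $\beta$-exponents of the target exactly. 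Finally, $q\ge 1$ gives $2/q\le 1+1/q$, hence $k^{2/q}\le k^{1+1/q}$ for $k\ge 1$, and $d^{1/q}\le d$ for $d\ge 1$; performing these two harmless relaxations produces exactly the claimed bound and completes the argument.
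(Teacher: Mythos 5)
Your overall architecture (Lemma~\ref{lemma:DifferenceExpectation} composed with Corollary~\ref{cor:BoundWasserstein}, then elementary exponent bookkeeping) is exactly the paper's route, which passes through the explicit technical Corollary~\ref{cor:distanceFromX1toX2} in the appendix. But there is a genuine gap at the step you yourself flag as the main obstacle: you assert that $\mathbb{E}\Vert X_1(k\eta)\Vert^{\gamma p}$ and $\mathbb{E}\Vert X_2(k\eta)\Vert^{\gamma p}$ are bounded \emph{uniformly} in $k$ and $\eta$, to be proved by ``a standard drift/dissipativity estimate.'' That estimate does not deliver uniformity here. Because the dissipativity in \Cref{assump:dissipative} is only fractional ($\gamma<1$, so the restoring force is sublinear at infinity) and the noise is heavy-tailed, the Lyapunov computation with $h(x)=(1+\Vert x\Vert^2)^{1/2}$ gives $\frac{d}{dt}\mathbb{E}h \leq -\tfrac{m}{2}\mathbb{E}[h^\gamma]+K$, and one cannot convert $\mathbb{E}[h^\gamma]$ into a lower bound in terms of $(\mathbb{E}h)^\gamma$ (Jensen goes the wrong way for the concave power). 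Accordingly, the paper's own moment bounds, Lemma~\ref{lemma:expectationBoundOfX1} and Lemma~\ref{lemma:expecataionBound}, grow \emph{linearly} in $t=k\eta$ and in the iteration index $j$, respectively; no uniform-in-time bound is available or used anywhere in the paper.

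The gap is repairable, and the repair explains a misreading at the end of your argument. Substituting the paper's growing moment bounds into Lemma~\ref{lemma:DifferenceExpectation}, the prefactor is not a constant but of the form $Q_3 + k^{1/p}P_3(\eta)$ (this is the content of Corollary~\ref{cor:distanceFromX1toX2}). Multiplying by $\mathcal{W}_q \lesssim k^{2/q}\eta^{1/q}(1+\cdots)$ and using $\tfrac1p + \tfrac2q = 1+\tfrac1q$ (since $p,q$ are conjugate), the dominant $k$-power is $k^{1/p}\cdot k^{2/q}=k^{1+1/q}$, while the extra $\eta$-powers hiding in $P_3(\eta)$ are nonnegative and get discarded by the ``max power of $k$, min power of $\eta$'' simplification. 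So the exponent $1+\tfrac1q$ in the statement is not, as you suggest, a harmless relaxation of $k^{2/q}$ via $2/q \le 1+1/q$; it is genuinely forced by the $k^{1/p}$ growth of the moment prefactor. If your uniform moment bound were actually provable, you would have improved the corollary to $k^{2/q}$, which should itself have been a red flag. With the moment step replaced as above, the rest of your proposal (verification of the gradient-growth hypothesis via \Cref{assump:boundedGradAtZero} and \Cref{assump:HolderContinuity}, subadditivity of $u\mapsto u^{1/q}$, and $d^{1/q}\le d$) is correct and matches the paper.
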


% \umut{How does this compare to Raginsky?}

\begin{remark}
For any $\varepsilon>0$, if we choose $k \simeq \varepsilon^{-1}\mathrm{Poly}(\beta,d)$ and $\eta < \varepsilon^{2q+1}\mathrm{Poly}(\beta,d)$, then the bound in Corollary~\ref{cor:distanceFromX1ToX2_2} scales as $\varepsilon\mathrm{Poly}(\beta,d)$.
\end{remark}

% \begin{lemma}\label{lemma:BoundWasserstein} Let $p_{1,min}\triangleq\frac{q-1}{2}+\frac{\gamma}{2}$, $p_{2,min}\triangleq\frac{q-1}{2}$ and $p_{3,min}\triangleq\frac{\gamma}{2}$. Suppose that $0<\eta<\min\big\{1,\frac{m}{M^2}\big\}$, then the bound for the Wasserstein distance between $X_1(t)$ and $X_2(t)$ can be written as follows:
% \begin{align*}
% \mathcal{W}_{q}(\mu_{1t}, \mu_{2t})=&\mathcal{O}(k^2\eta^{1+p_{1,min}}+ k^{1+1/p_1}\eta^{1+p_{2,min}}\\
% &+ k^{1+1/q_1}\eta^{1+p_{3,min}} + k\eta).
% \end{align*}
% % as $k\eta=t\rightarrow\infty$.
% \end{lemma}

% \begin{corollary}\label{cor:distanceFromX1ToX2_2} For $0<\eta<\frac{m}{M^2}$, we have
% \begin{align*}
% &\big\vert\mathbb{E}[f(X_1(k\eta))] - \mathbb{E}[f(X_2(k\eta))]\big\vert=\\
% &\mathcal{O}\Big(k^{1+\frac{1}{q}}\eta^{\frac{(1+\gamma)(1+q)}{2q}}  +  k^{1+\frac{1}{qp_1}}\eta^{\frac{1+q+\gamma q}{2q}}  +  k^{1+\frac{1}{qq_1}}\eta^{\frac{2+\gamma+\gamma q}{2q}}  \\&
% +  k\eta^{\frac{2+\gamma q}{2q}}  +  k^{\frac{2}{q}}\eta^{\frac{1+q+\gamma}{2q}}  +  k^{\frac{1}{q}+\frac{1}{qp_1}}\eta^{\frac{1+q}{2q}}  +  k^{\frac{1}{q}+\frac{1}{qq_1}}\eta^{\frac{2+\gamma}{2q}}\\
% &  +  k^{\frac{1}{q}}\eta^{\frac{1}{q}}\Big).
% \end{align*}
% % as $k\eta=t\rightarrow\infty$.
% \end{corollary}

% \subsection{Expected discrepancy between $X_1$ and $X_3$}

Next, by using a similar approach, we bound the distance between $\mu_{1t}$ and $\mu_{3t}$. In the next theorem, we show that the error grows polynomially with the parameters.
\begin{theorem}\label{thm:WassersteinOfX1X3}
We have the following estimate:
\begin{align*}
\mathcal{W}_{q}^{q}(\mu_{1t}, \mu_{3t})\leq& C q \mathrm{Poly}(k,\eta,\beta,d)
% \frac{qM}{q+\gamma}(c^{q-1}+c_b^{q-1})(c^{\gamma}+c_b^{\gamma})\Big(\frac{t}{\beta^{1/\alpha}}+1\Big)^{q+\gamma} + L(c^{q-1}+c_b^{q-1})\Big(\frac{t}{\beta^{1/\alpha}}+1\Big)^{q}\\
% & - \Big(\frac{qM}{q+\gamma}(c^{q-1}+c_b^{q-1})(c^{\gamma}+c_b^{\gamma}) + L(c^{q-1}+c_b^{q-1})\Big).
% q\eta\Big(\frac{k(k+1)}{2}[P_1(\eta)P_2(\eta) + P_1(\eta)Q_2 + P_2(\eta)Q_1(\eta)] + kQ_1(\eta)Q_2\Big).
\end{align*}
% Note that, on the right hand side of this inequality, $P_1(\eta)P_2(\eta) + P_1(\eta)Q_2(\eta) + P_2(\eta)Q_1(\eta) = \mathcal{O}(\eta^{\frac{(q-1)\gamma}{2}})$ and $Q_1(\eta)Q_2(\eta)=\mathcal{O}(1)$.
\end{theorem}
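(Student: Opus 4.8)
The plan is to apply Lemma~\ref{lemma:WassersteinFormula} with $i=1$, $j=3$, $\lambda=q$, and to bound the infimum over couplings by evaluating the expression at the \emph{synchronous coupling}: I take $X_1$ and $X_3$ with the common initialization shared by the processes in \eqref{eqn:decompRiskFrac} and driven by the same realization of $L^\alpha$. Because both SDEs \eqref{sde:cont_euler} and \eqref{eqn:FracLangevinGibbs} carry the identical diffusion coefficient $\beta^{-1/\alpha}$, the stable increments cancel in $\Delta X_{13}\triangleq X_1-X_3$, so $\Delta X_{13}$ is absolutely continuous with $\Delta X_{13}(0)=0$ and $\frac{\mathrm{d}}{\mathrm{d}s}\Delta X_{13}(s)=\Delta b_{13}(s)$ for a.e.\ $s$ (the left limits agree with the values off the Lebesgue-null set of jump times). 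Writing $\psi(s)\triangleq\mathbb{E}\Vert\Delta X_{13}(s)\Vert^q$, the pathwise chain rule underlying Lemma~\ref{lemma:WassersteinFormula} then supplies both an upper bound and an exact identity:
\[
\mathcal{W}_q^q(\mu_{1t},\mu_{3t}) \le \psi(t) = \mathbb{E}\Big[\int_0^t q\,\Vert\Delta X_{13}(s)\Vert^{q-2}\langle \Delta X_{13}(s),\Delta b_{13}(s)\rangle\,\mathrm{d}s\Big].
\]

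Next I would split $\Delta b_{13}=\big(b_1(X_1)-b_1(X_3)\big)+\big(b_1(X_3)-b(X_3)\big)$ and bound the integrand pathwise. Cauchy--Schwarz together with the H\"older continuity of \Cref{assump:HolderContinuity} controls the first bracket by $M\Vert\Delta X_{13}\Vert^{1+\gamma}$, while Cauchy--Schwarz with the uniform drift-approximation bound of \Cref{assump:uniformlyBounded} controls the second by $L\Vert\Delta X_{13}\Vert$; hence the integrand is at most $qM\Vert\Delta X_{13}\Vert^{q-1+\gamma}+qL\Vert\Delta X_{13}\Vert^{q-1}$. Taking expectations and applying Jensen's inequality to the nonnegative variables $\Vert\Delta X_{13}\Vert^{q-1+\gamma}$ and $\Vert\Delta X_{13}\Vert^{q-1}$ — legitimate because $q>1$ (from $q\in(1,\alpha)$) and $\gamma<1$ force both exponents $\tfrac{q-1+\gamma}{q}$ and $\tfrac{q-1}{q}$ into $(0,1)$, with the parameter compatibility supplied by \Cref{assump:conditionsOnParameters} — converts the moment terms into powers of $\psi$, yielding the closed integral inequality
\[
\psi(t)\le\int_0^t\Big(qM\,\psi(s)^{\frac{q-1+\gamma}{q}}+qL\,\psi(s)^{\frac{q-1}{q}}\Big)\,\mathrm{d}s,\qquad\psi(0)=0.
\]

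The decisive step is to extract a \emph{polynomial-in-$t$} bound from this nonlinear Gr\"onwall/Bihari inequality, and this is precisely where the argument departs from the classical weak-error estimates that grow exponentially in $t$ \cite{mikulevivcius2011rate}. The point is that \emph{both} right-hand exponents are strictly sublinear — a direct consequence of excluding $\gamma=1$ in \Cref{assump:HolderContinuity} — so a comparison with the scalar ODE $\dot v=qMv^{(q-1+\gamma)/q}+qLv^{(q-1)/q}$, $v(0)=0$, majorizes $\psi$ by the maximal solution, which grows like $t^{q/(1-\gamma)}+t^{q}$ rather than exponentially. Substituting $t=k\eta$ then gives $\mathcal{W}_q^q(\mu_{1t},\mu_{3t})\le Cq\,\mathrm{Poly}(k,\eta,\beta,d)$, the $(k,\eta)$-dependence coming from the sublinear ODE and the $(\beta,d)$-dependence being inherited through the constants in the assumptions (in particular $M$ and $L$, the latter tied to the tail decay of $\pi$; cf.\ the discussion following \Cref{assump:uniformlyBounded}). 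I expect the Bihari comparison to be the main obstacle: one must deal with the non-uniqueness of the comparison ODE at $v=0$ (so that the maximal solution is the correct majorant) and keep track of which of the two sublinear terms dominates, in order to read off the precise exponents of $k$ and $\eta$ in the final polynomial.
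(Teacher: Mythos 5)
Your proof is correct, but it takes a genuinely different route from the paper's. The paper never exploits the cancellation of the noise under a synchronous coupling: after invoking Lemma~\ref{lemma:WassersteinFormula}, Cauchy--Schwarz, and H\"older with the exponents $p_1,q_1$ of \Cref{assump:conditionsOnParameters}, it bounds the difference \emph{crudely} via $\mathbb{E}\Vert X_1(s)-X_3(s)\Vert^{(q-1)p_1}\leq\mathbb{E}\Vert X_1(s)\Vert^{(q-1)p_1}+\mathbb{E}\Vert X_3(s)\Vert^{(q-1)p_1}$ (Lemma~\ref{lemma:anUsefulIneq}; this is why $(q-1)p_1<1$ is needed there) and then feeds in the a priori moment estimates of Lemmas~\ref{lemma:expectationBoundOfX1} and~\ref{lemma:expectationBoundOfX3}, $\mathbb{E}(\Vert X_i(t)\Vert^{\lambda})^{1/\lambda}\leq c\,(t(d\beta^{-1/\alpha}+1)+1)$, which rest on the dissipativity \Cref{assump:dissipative} and the L\'evy-measure computation of Lemma~\ref{lemma:levyMeasure}. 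In the paper the polynomial growth in $t$ thus comes from the linear-in-$t$ growth of the fractional moments of each process \emph{separately}, giving a bound of order $qt\,(t(d\beta^{-1/\alpha}+1)+1)^{q-1+\gamma}$, i.e.\ $t^{q+\gamma}$ with explicit $d$ and $\beta$ factors, which is exactly what propagates into Corollary~\ref{cor:WassersteinOfX1X3}. Your contraction-plus-Bihari argument instead yields $C(t^{q/(1-\gamma)}+t^{q})$, entirely free of $d$ and $\beta$ since the stable increments cancel, and it only uses \Cref{assump:HolderContinuity} and \Cref{assump:uniformlyBounded}: you need no dissipativity, and (a small misattribution on your side) the Jensen step needs nothing from \Cref{assump:conditionsOnParameters} either, only $q>1$ and $\gamma<1$. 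The trade-off: since $q/(1-\gamma)-(q+\gamma)=\gamma(q+\gamma-1)/(1-\gamma)>0$, your exponent in $t$ is strictly worse whenever $\gamma>0$, whereas the paper's smaller exponent carries polynomial $d,\beta^{-1/\alpha}$ factors; both deliver the stated $Cq\,\mathrm{Poly}(k,\eta,\beta,d)$ after setting $t=k\eta$, and your dimension-free constant is an appealing feature the paper does not obtain. Your anticipated obstacles are indeed benign: $F_q(y)=\Vert y\Vert^{q}$ is $C^1$ for $q>1$ (and the paper's own proof of Lemma~\ref{lemma:WassersteinFormula} already implicitly works under the synchronous coupling, since only the drift appears in its chain rule), $\psi$ is finite because $\Vert\Delta X_{13}\Vert$ obeys the deterministic differential inequality $\frac{\mathrm{d}}{\mathrm{d}s}\Vert\Delta X_{13}\Vert\leq M\Vert\Delta X_{13}\Vert^{\gamma}+L$ pathwise, and since $g(v)=qMv^{(q-1+\gamma)/q}+qLv^{(q-1)/q}$ satisfies $\int_0^{\varepsilon}\mathrm{d}v/g(v)<\infty$, the maximal solution through $0$ is finite and explicit, so the Bihari comparison goes through despite the non-uniqueness at the origin.
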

By considering the leading terms of the bound in Theorem~\ref{thm:WassersteinOfX1X3} and combining it with Lemma~\ref{lemma:DifferenceExpectation}, we obtain the following corollaries.
\begin{corollary}\label{cor:WassersteinOfX1X3}
There exists a constant $C\geq0$ such that the following bound holds:
\begin{align*}
\mathcal{W}_{q}^{q}(\mu_{1t}, \mu_{3t})\leq& C(k^{q+\gamma}\eta + k^{q+\gamma}\eta^{q}\beta^{-\frac{q-1}{\alpha}}d)
\end{align*}
\end{corollary}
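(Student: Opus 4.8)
The plan is to read off Corollary~\ref{cor:WassersteinOfX1X3} directly from Theorem~\ref{thm:WassersteinOfX1X3}: the theorem already delivers a bound of the form $Cq\,\mathrm{Poly}(k,\eta,\beta,d)$, so the only remaining task is to extract the dominant monomials of that polynomial. First I would invoke the standing step-size restriction $0<\eta<1$ (exactly as in Corollary~\ref{cor:BoundWasserstein}) so that higher powers of $\eta$ are dominated by lower ones whenever the accompanying $k$-, $\beta$-, and $d$-powers agree. Grouping the monomials by their $k$-power and keeping only the maximal group should leave exactly two surviving terms, $k^{q+\gamma}\eta$ and $k^{q+\gamma}\eta^{q}\beta^{-(q-1)/\alpha}d$; every other monomial, together with the prefactor $q$ (a fixed constant once $q$ and $\alpha$ are chosen), is then folded into $C$.

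To anticipate why these are the terms that remain, it helps to recall the structure behind the polynomial. By Lemma~\ref{lemma:WassersteinFormula}, taking the synchronous coupling makes the common noise $\beta^{-1/\alpha}L^{\alpha}$ cancel, so that $\Delta X_{13}$ is absolutely continuous and $\mathcal{W}_q^q(\mu_{1t},\mu_{3t})\le\mathbb{E}\int_0^t q\Vert\Delta X_{13}(s)\Vert^{q-2}\langle\Delta X_{13}(s),\Delta b_{13}(s-)\rangle\,\mathrm{d}s$. Decomposing $\Delta b_{13}=c_\alpha(\nabla f(X_3)-\nabla f(X_1))-(c_\alpha\nabla f(X_3)+b(X_3,\alpha))$ isolates a H\"older piece bounded by $M\Vert\Delta X_{13}\Vert^{\gamma}$ through \Cref{assump:HolderContinuity} and a uniformly bounded approximation piece bounded by $L$ through \Cref{assump:uniformlyBounded}. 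Because $\gamma<1$, the induced inequality for $(\mathbb{E}\Vert\Delta X_{13}(s)\Vert^q)^{1/q}$ is \emph{sublinear}, which is exactly what turns the generic exponential-in-$t$ weak-error estimate into a polynomial one; substituting $t=k\eta$ and feeding in the uniform moment bounds on $X_1$ and $X_3$ (which carry the noise scale $\beta^{-(q-1)/\alpha}$ and the dimension $d$) produces the two monomials above.

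The hard part is bookkeeping rather than analysis: the monomials cannot be ranked by their $\eta$-powers alone, since they also carry different powers of the free parameters $\beta$ and $d$. The correct reduction is to retain the maximal $k$-power, here $k^{q+\gamma}$, and then to keep every monomial in that group that is not dominated once $\beta$ and $d$ are treated as independent variables — which is precisely why the two surviving terms carry different $\eta$-powers. I would carry out this comparison termwise against the explicit constants recorded in the supplement, using $\eta<1$ only to discard monomials that share the $k$-, $\beta$-, and $d$-powers of a retained term but carry a strictly larger power of $\eta$, mirroring the reduction that takes Theorem~\ref{thm:boundOfWassersteinShort} to Corollary~\ref{cor:BoundWasserstein}.
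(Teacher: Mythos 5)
Your route is the paper's own: substitute $t=k\eta$ into the explicit bound of Theorem~\ref{thm:WassersteinOfX1X3} and extract the dominant monomials exactly as in the reduction from Theorem~\ref{thm:boundOfWassersteinShort} to Corollary~\ref{cor:BoundWasserstein}. However, your bookkeeping skips the one step where this corollary genuinely differs from Corollary~\ref{cor:BoundWasserstein}. Performed mechanically (max power of $k$, min power of $\eta$, min power of $\beta^{-1}$, max power of $d$, using $k\geq1$, $\eta<1$, $\beta\geq1$, $d\geq1$), the extraction applied to the theorem's bound
\begin{align*}
qt\Big(M(c^{q-1}+c_b^{q-1})(c^{\gamma}+c_b^{\gamma})\big(t(d\beta^{-1/\alpha}+1)+1\big)^{q-1+\gamma}+L(c^{q-1}+c_b^{q-1})\big(t(d\beta^{-1/\alpha}+1)+1\big)^{q-1}\Big)
\end{align*}
yields $C\big(k^{q+\gamma}\eta + k^{q+\gamma}\eta^{q}\beta^{-(q-1)/\alpha}d^{\,q-1+\gamma}\big)$, i.e.\ with dimension exponent $q-1+\gamma$, not $1$. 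The paper closes this with a separate argument: by \Cref{assump:conditionsOnParameters}, $q-1<1/p_1$ and $\gamma<1/q_1$, hence $q-1+\gamma<1/p_1+1/q_1=1$ and so $d^{\,q-1+\gamma}<d$ since $d\geq1$. Your proposal asserts that the surviving $\beta$-containing term carries $d^{1}$ outright, so your claim that the grouping ``leaves exactly two surviving terms'' with that $d$-power is unjustified as written; you need this explicit invocation of \Cref{assump:conditionsOnParameters} to land on the stated bound.

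A secondary, less consequential point: your heuristic for why the estimate is polynomial in $t$ mischaracterizes the paper's mechanism. The proof of Theorem~\ref{thm:WassersteinOfX1X3} does not run a sublinear Gr\"{o}nwall-type recursion on $\Delta X_{13}$; it bounds $\Vert X_1(s)-X_3(s)\Vert^{(q-1)p_1}$ crudely by $\Vert X_1(s)\Vert^{(q-1)p_1}+\Vert X_3(s)\Vert^{(q-1)p_1}$ (subadditivity of $x\mapsto x^{(q-1)p_1}$, since $(q-1)p_1<1$) and then invokes the a priori moment estimates of Lemma~\ref{lemma:expectationBoundOfX1} and Lemma~\ref{lemma:expectationBoundOfX3}, which grow linearly in $t$. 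The polynomial growth thus comes from these one-sided moment bounds on each process separately, not from any cancellation or contraction in the coupled difference. This does not affect the corollary's derivation from the theorem, but it matters if your sketch was meant to stand in for the theorem's proof.
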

\begin{corollary}\label{cor:distanceFromX1toX3} There exists a constant $C\geq0$ such that the following inequality holds:
\begin{align*}
\vert\mathbb{E}[f(X_1(k\eta))] - \mathbb{E}[f(X_3(k\eta))]\vert \leq C\Big(k^{\gamma+\frac{\gamma+q}{q}}\eta^{\gamma+\frac{1}{q}}\beta^{-\frac{\gamma}{\alpha}}d + k^{\gamma+\frac{\gamma + q}{q}}\eta^{\frac{1}{q}}\Big).
% \Big(\frac{t}{\beta^{1/\alpha}}+1\Big)\Big(\big(Mc^{\gamma} + Mc_b^{\gamma}\big)\Big(\frac{t}{\beta^{1/\alpha}}+1\Big)^{\gamma} + B\Big)\Bigg(\Big(\frac{qM}{q+\gamma}\Big)^{\frac{1}{q}}(c^{q-1}+c_b^{q-1})^{\frac{1}{q}}(c^{\gamma}\\
% &+c_b^{\gamma})^{\frac{1}{q}}\Big(\frac{t}{\beta^{1/\alpha}}+1\Big)^{\frac{\gamma}{q}} + L^{\frac{1}{q}}(c^{q-1}+c_b^{q-1})^{\frac{1}{q}}\Bigg)\\
% \leq&C\Big(t^{\gamma+\frac{q+\gamma}{q}}\beta^{-\frac{\gamma}{q\alpha}} + 1\Big).
\end{align*}
\end{corollary}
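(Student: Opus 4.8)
The quantity to be controlled is the second bracket of the decomposition \eqref{eqn:decompRiskFrac}, namely $\mathbb{E}[f(X_1(k\eta))]-\mathbb{E}[f(X_3(k\eta))]$, and the plan is to reduce it to a Wasserstein distance that has already been estimated and then invoke the continuity result of Lemma~\ref{lemma:DifferenceExpectation}. Concretely, I would apply Lemma~\ref{lemma:DifferenceExpectation} with $g=f$, $V=X_1(k\eta)$ and $W=X_3(k\eta)$, so that $\mu$ and $\nu$ are the laws $\mu_{1,k\eta}$ and $\mu_{3,k\eta}$. The first hypothesis of that lemma, the growth bound $\Vert\nabla f(x)\Vert\leq c_1\Vert x\Vert^{\gamma}+c_2$, comes for free from the standing assumptions: setting $y=0$ in \Cref{assump:HolderContinuity} gives $c_\alpha\Vert\nabla f(x)-\nabla f(0)\Vert\leq M\Vert x\Vert^{\gamma}$, and combining this with \Cref{assump:boundedGradAtZero} yields $\Vert\nabla f(x)\Vert\leq (M/c_\alpha)\Vert x\Vert^{\gamma}+B/c_\alpha$, i.e.\ $c_1=M/c_\alpha$ and $c_2=B/c_\alpha$. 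The second hypothesis, finiteness of the fractional moments $(\mathbb{E}\Vert X_1(k\eta)\Vert^{\gamma p})^{1/p}$ and $(\mathbb{E}\Vert X_3(k\eta)\Vert^{\gamma p})^{1/p}$, is exactly where \Cref{assump:conditionsOnParameters} enters: since $\gamma p<1<\alpha$, the required moment sits strictly below the stability index, so it is finite for these $\alpha$-stable–driven SDEs, and dissipativity (\Cref{assump:dissipative}), together with \Cref{assump:momentsOfInvariant} for $X_3$ whose law tends to $\pi$, keeps it bounded along the trajectory.

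With the hypotheses verified, Lemma~\ref{lemma:DifferenceExpectation} delivers
\[
\big\vert\mathbb{E}[f(X_1(k\eta))]-\mathbb{E}[f(X_3(k\eta))]\big\vert\leq C\,\mathcal{W}_q(\mu_{1,k\eta},\mu_{3,k\eta}),
\]
where the constant $C$ collects $c_1,c_2$ and the two fractional-moment factors. I would then substitute the Wasserstein estimate of Corollary~\ref{cor:WassersteinOfX1X3} (equivalently, the leading terms of Theorem~\ref{thm:WassersteinOfX1X3}), which controls $\mathcal{W}_q^q(\mu_{1,k\eta},\mu_{3,k\eta})$. Taking the $q$-th root and using that $t\mapsto t^{1/q}$ is concave for $q>1$, hence subadditive ($(a+b)^{1/q}\leq a^{1/q}+b^{1/q}$), splits the bound into the two summands of the claim: the root of the $d$-carrying Wasserstein term produces the first summand with $\eta^{\gamma+1/q}$ and $\beta^{-\gamma/\alpha}$, while the root of the remaining term produces the second summand with $\eta^{1/q}$, and both inherit the common power $k^{\gamma+(\gamma+q)/q}$.

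The hard part is not the algebra of the $q$-th root but the control of the constant $C$, in particular the fractional-moment factors hidden in it. Because $\alpha$-stable dynamics possess moments only of order below $\alpha$, the usual second-moment Lyapunov arguments used for ULA are unavailable, and the moment estimate has to be carried out at the fractional order $\gamma p$, precisely the order that \Cref{assump:conditionsOnParameters} was designed to keep admissible. I would need to check that these moment factors are controlled — uniformly in $k$ and $\eta$ where possible, otherwise with their polynomial growth tracked explicitly — so that, as required for the downstream use in Theorem~\ref{thm:ultimateTheorem}, no hidden $k$- or $\eta$-dependence is swept into $C$; at the same time their dependence on $d$ and $\beta$ must be retained, since that is what ultimately yields the explicit $d$- and $\beta$-factors in the final bound. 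A secondary, purely book-keeping difficulty is to keep the exponents consistent through the root-and-subadditivity step, so that the two-term structure — later responsible for the $\max\{1/q,\gamma+\gamma/q\}$ that surfaces in the main theorem — comes out with the stated powers.
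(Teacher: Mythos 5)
Your high-level route is the paper's route: apply Lemma~\ref{lemma:DifferenceExpectation} with $g=f$, $V=X_1(k\eta)$, $W=X_3(k\eta)$ (the growth bound on $\nabla f$ from \Cref{assump:boundedGradAtZero}--\Cref{assump:HolderContinuity}, i.e.\ Lemma~\ref{lemma:gradientBound}), multiply by the Wasserstein estimate of Theorem~\ref{thm:WassersteinOfX1X3}, take $q$-th roots via the subadditivity inequality (Lemma~\ref{lemma:anUsefulIneq}), and do the exponent bookkeeping. However, there is a genuine gap at exactly the step you flag as ``the hard part'' and then assert rather than prove: you claim the fractional-moment factors $(\mathbb{E}\Vert X_1(k\eta)\Vert^{\gamma p})^{1/p}$ and $(\mathbb{E}\Vert X_3(k\eta)\Vert^{\gamma p})^{1/p}$ are ``bounded along the trajectory'' by dissipativity and \Cref{assump:momentsOfInvariant}, so that they can be absorbed into a constant $C$. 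For $X_3$ a uniform-in-$t$ bound is indeed available (the paper uses it in the proof of Lemma~\ref{lemma:distanceFromX3ToInvariant2}, via \Cref{assump:ergodicity}, \Cref{assump:momentsOfInvariant} and Theorem 7.12 of \cite{villani2003topics}), but for $X_1$ no such bound exists in the paper's toolkit: \Cref{assump:ergodicity} concerns only $X_3$, and the only available estimate, Lemma~\ref{lemma:expectationBoundOfX1}, gives
\begin{align*}
\big(\mathbb{E}\Vert X_1(t)\Vert^{\gamma p}\big)^{\frac{1}{p}}\leq \Big(c\big(t(d\beta^{-1/\alpha}+1)+1\big)\Big)^{\gamma},
\end{align*}
which grows like $t^{\gamma}$ and carries $d^{\gamma}\beta^{-\gamma/\alpha}$. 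The paper's proof keeps this growing prefactor, and it is precisely the source of the stated exponents: the extra $k^{\gamma}\eta^{\gamma}$ inflating both summands, the factor $\beta^{-\gamma/\alpha}$ (which is the smallest $\beta^{-1}$ power since $\gamma<1/p=(q-1)/q$ implies $\gamma/\alpha<(q-1)/(q\alpha)$), and the $d$-power $\gamma+\frac{q-1+\gamma}{q}$, tamed to $\leq 1/p+1/q=1$ using \Cref{assump:conditionsOnParameters}.

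This also means your exponent accounting is internally inconsistent: the $q$-th root of the $d$-carrying term in Corollary~\ref{cor:WassersteinOfX1X3} is $k^{1+\gamma/q}\,\eta\,\beta^{-(q-1)/(q\alpha)}d^{1/q}$, not $k^{\gamma+\frac{\gamma+q}{q}}\eta^{\gamma+\frac{1}{q}}\beta^{-\gamma/\alpha}d$, so with a genuinely constant prefactor you could not ``produce the first summand'' as described --- you would in fact land on a \emph{different} (and, after the crude comparisons $\eta\leq\eta^{\gamma+1/q}$ for $\eta\leq1$, $\beta^{-(q-1)/(q\alpha)}\leq\beta^{-\gamma/\alpha}$ for $\beta\geq1$, $d^{1/q}\leq d$, formally stronger) bound, but only on the strength of the unproven uniform moment bound for $X_1$. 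To close the argument within the paper's assumptions you must replace the boundedness claim by the polynomially growing estimates of Lemma~\ref{lemma:expectationBoundOfX1} and Lemma~\ref{lemma:expectationBoundOfX3} and propagate the $t^{\gamma}$, $d^{\gamma}$, $\beta^{-\gamma/\alpha}$ factors through the bookkeeping, as the paper does.
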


\begin{remark}
For any $\varepsilon>0$, if we choose $k\simeq \varepsilon^{-1}\mathrm{Poly}(\beta,d)$ and $\eta < \varepsilon^{2q+\gamma q+\gamma}\mathrm{Poly}(\beta,d)$, then the bound in Corollary~\ref{cor:distanceFromX1toX3} scales as $\varepsilon\mathrm{Poly}(\beta,d)$.
\end{remark}

% $\mathbf{Remark:}$ 

% \subsection{Expected discrepancy between $X_3$ and the Gibbs sample}

We now pass to the term $\mathbb{E}f(X_3(k\eta)) - \mathbb{E}f(\hat{W})$ of \eqref{eqn:decompRiskFrac}. Since we already assumed that $\mu_{3t}$ exponentially converges to $\pi$ in Wasserstein distance (cf.\ \Cref{assump:ergodicity}), as a direct application of Lemma~\ref{lemma:DifferenceExpectation}, we obtain the following result.
\begin{lemma}\label{lemma:distanceFromX3ToInvariant2} Let $\hat{W}$ be a random variable drawn from the invariant measure $\pi\propto\exp(-\beta f)$ of \eqref{eqn:FracLangevinGibbs}. There exists a constant $C\geq0$ such that the following bound holds:
\begin{align*}
\vert\mathbb{E}[f(X_3(t))] - \mathbb{E}[f(\hat{W})]\vert\leq C\frac{b\beta+d}{m}\exp(- \lambda_*\beta^{-1}t).
\end{align*}
% as $t\rightarrow\infty$
\end{lemma}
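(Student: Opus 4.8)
The plan is to apply Lemma~\ref{lemma:DifferenceExpectation} with $g=f$, $\mu=\mu_{3t}$ (the law of $X_3(t)$) and $\nu=\pi$, so that the expectation gap is controlled by $\mathcal{W}_q(\mu_{3t},\pi)$, and then to invoke the ergodicity hypothesis \Cref{assump:ergodicity} to turn this Wasserstein distance into an exponentially decaying quantity. Concretely, once the hypotheses of Lemma~\ref{lemma:DifferenceExpectation} are verified, it yields
\begin{align*}
\big\vert\mathbb{E}[f(X_3(t))] - \mathbb{E}[f(\hat{W})]\big\vert \leq C' \, \mathcal{W}_q(\mu_{3t},\pi),
\end{align*}
and choosing $\lambda=q$ in \Cref{assump:ergodicity} (admissible since $q<\alpha$ by \Cref{assump:conditionsOnParameters}) gives $\mathcal{W}_q(\mu_{3t},\pi)\leq C\beta e^{-\lambda_* t/\beta}$. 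Substituting produces both the exponential factor $\exp(-\lambda_*\beta^{-1}t)$ and the $\beta$ prefactor of the claimed bound.

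To legitimately invoke Lemma~\ref{lemma:DifferenceExpectation} with $g=f$, I first check its gradient-growth hypothesis. Combining \Cref{assump:boundedGradAtZero} and \Cref{assump:HolderContinuity} through the triangle inequality gives, for all $x$,
\begin{align*}
c_\alpha\Vert\nabla f(x)\Vert \leq c_\alpha\Vert\nabla f(x)-\nabla f(0)\Vert + c_\alpha\Vert\nabla f(0)\Vert \leq M\Vert x\Vert^{\gamma} + B,
\end{align*}
i.e.\ $\Vert\nabla f(x)\Vert\leq (M/c_\alpha)\Vert x\Vert^{\gamma} + B/c_\alpha$, which is exactly the required form with $c_1=M/c_\alpha$ and $c_2=B/c_\alpha$. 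The remaining hypothesis is finiteness of the $\gamma p$-moments of both measures. For $\pi$ this is immediate from \Cref{assump:momentsOfInvariant}: since $\gamma p<1\leq2$ by \Cref{assump:conditionsOnParameters}, the moment $\int\Vert x\Vert^{\gamma p}\pi(\mathrm{d}x)\leq C(b+d/\beta)/m$ is finite. For $\mu_{3t}$ I use that $q<\alpha\leq2$ together with \Cref{assump:momentsOfInvariant} to bound the $q$-moment of $\pi$, and then the triangle inequality for $\mathcal{W}_q$, namely $(\mathbb{E}\Vert X_3(t)\Vert^{q})^{1/q}\leq \mathcal{W}_q(\mu_{3t},\pi) + (\int\Vert x\Vert^q\pi(\mathrm{d}x))^{1/q}$, which is finite for each $t$; since $\gamma p<1<q$, Jensen's inequality upgrades this to finiteness of the $\gamma p$-moment of $\mu_{3t}$.

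The remaining work is to recover the explicit prefactor $(b\beta+d)/m$, since Lemma~\ref{lemma:DifferenceExpectation} supplies only an unspecified constant $C'$. Re-examining the proof of that lemma, $C'$ depends on the two measures only through their $\gamma p$-moments (together with $c_1,c_2$), and by \Cref{assump:momentsOfInvariant} these are controlled by the scale $(b+d/\beta)/m$, uniformly in $t$. Multiplying this moment scale by the factor $\beta$ furnished by \Cref{assump:ergodicity} gives $\beta\cdot(b+d/\beta)/m = (b\beta+d)/m$, the announced leading dependence; any fractional powers of the moment scale arising from the H\"older step inside Lemma~\ref{lemma:DifferenceExpectation} are absorbed into $C$. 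I expect this constant-tracking — in particular certifying that the $\gamma p$-moment of the time-$t$ law $\mu_{3t}$ stays bounded uniformly in $t$ by the invariant-measure moment scale, rather than growing — to be the only delicate point, whereas the reduction itself is a direct composition of the two previously established results.
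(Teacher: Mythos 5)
Your proposal is correct and follows essentially the same route as the paper's proof: apply Lemma~\ref{lemma:DifferenceExpectation} with $g=f$ (the gradient-growth hypothesis verified from \Cref{assump:boundedGradAtZero} and \Cref{assump:HolderContinuity} exactly as in the paper's Lemma~\ref{lemma:gradientBound}), bound the $\gamma p$-moment of $\pi$ via \Cref{assump:momentsOfInvariant}, and take $\lambda=q<\alpha$ in \Cref{assump:ergodicity} to produce both the exponential factor and the $\beta$ prefactor. The one place you genuinely diverge is the uniform-in-$t$ control of $\mathbb{E}\Vert X_3(t)\Vert^{\gamma p}$: the paper deduces $\lim_{t\rightarrow\infty}\mathbb{E}\Vert X_3(t)\Vert^{\gamma p}=\mathbb{E}\Vert \hat{W}\Vert^{\gamma p}$ from \Cref{assump:ergodicity} via Theorem 7.12 of \cite{villani2003topics} (Wasserstein convergence implies convergence of moments) and then asserts boundedness in $t$, whereas you use the metric triangle inequality $\mathcal{W}_q(\mu_{3t},\delta_0)\leq\mathcal{W}_q(\mu_{3t},\pi)+\mathcal{W}_q(\pi,\delta_0)$ followed by Jensen. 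Your variant is more elementary and more quantitative, giving an explicit bound at every finite $t$ where the paper's limit argument strictly controls only large $t$; the trade-off is that it injects the $C\beta$ of \Cref{assump:ergodicity} into the moment bound, so fully tracked constants would acquire extra $\beta^{\gamma}$-type factors in the prefactor. Since the paper itself absorbs the analogous fractional powers of $(b+d/\beta)/m$ into $C$ (its intermediate bracket carries $(\cdot)^{1/p}$ powers of the moments, yet the stated prefactor is $(b\beta+d)/m$), your explicit flagging of this absorption matches, and does not fall below, the paper's own level of rigor.
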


\begin{remark}
For any $\varepsilon>0$, if we take $k\eta > \frac{\beta}{\lambda_*} \log\Bigl(\frac1{\varepsilon}\Bigr)$, then the bound in \Cref{lemma:distanceFromX3ToInvariant2} can be scaled as $\varepsilon\mathrm{Poly}(\beta,d)$.
\end{remark}

% $\mathbf{Remark:}$ 

% \subsection{Expected discrepancy between the Gibbs sample and the minimum}

% \begin{lemma}\label{cor:distanceInvariantToMinimumInExpectation} For $\beta>0$, we have
% \begin{align*}
% \mathbb{E}[f(\hat{W})] - f^\star\leq& \beta^{-1}\log\Big(\frac{(2ec_r)^{d/2}\Gamma(d/2+1)\beta^d}{d^{d/2}}\Big) \\
% &+  \frac{\beta^{-\gamma-1} M}{c_\alpha(1+\gamma)},
% \end{align*}
% where $c_r$ is a constant depending on $d,b,m,\alpha,\gamma,\beta$.
% \end{lemma}

We finally bound the term $\mathbb{E}f(\hat{W}) - f^*$, which is the expected suboptimality of a sample from $\pi$. By following a similar proof technique presented in \cite{raginsky17a}, we obtain the following result.
\begin{lemma}\label{lem:distanceInvariantToMinimumInExpectation} For $\beta>0$, we have
\begin{align*}
\mathbb{E}[f(\hat{W})] - f^\star \leq& \beta^{-1}\log\Big(\frac{(2e(b+\frac{d}{\beta}))^{d/2}\Gamma(\frac{d}{2}+1)\beta^d}{(dm)^{d/2}}\Big) + \frac{\beta^{-\gamma-1} Mc_\alpha^{-1}}{1+\gamma}.
\end{align*}
\end{lemma}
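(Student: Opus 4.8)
The plan is to bound the expected suboptimality under the Gibbs measure by splitting it into a differential-entropy term and a normalizing-constant term, following the strategy of \cite{raginsky17a}. First I would center the objective by setting $\tilde f \triangleq f - f^\star \geq 0$ and writing $\tilde Z \triangleq \int_{\mathbb{R}^d} e^{-\beta \tilde f(x)}\,\mathrm{d}x$, so that $\pi(x) = e^{-\beta\tilde f(x)}/\tilde Z$ is the same measure. Since $\log\pi(x) = -\beta\tilde f(x) - \log\tilde Z$, integrating against $\pi$ yields the exact identity
\begin{align*}
\mathbb{E}[f(\hat W)] - f^\star = \mathbb{E}_\pi[\tilde f] = \frac1\beta\big(H(\pi) - \log\tilde Z\big),
\end{align*}
where $H(\pi) \triangleq -\int \pi\log\pi$ is the differential entropy of $\pi$. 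It then remains to upper bound $H(\pi)$ and lower bound $\tilde Z$.

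For the entropy term, I would invoke the maximum-entropy principle: among all densities with a fixed covariance $\Sigma$, the Gaussian maximizes $H$, giving $H(\pi)\le\frac12\log\big((2\pi e)^d\det\Sigma\big)$. Bounding $\det\Sigma\le(\mathrm{tr}\,\Sigma/d)^d$ by the AM--GM inequality and $\mathrm{tr}\,\Sigma = \mathbb{E}_\pi\|X-\mathbb{E}_\pi X\|^2 \le \mathbb{E}_\pi\|X\|^2$, I obtain $H(\pi)\le\frac d2\log\big(\frac{2\pi e}{d}\mathbb{E}_\pi\|X\|^2\big)$. The second moment is then controlled by \Cref{assump:momentsOfInvariant} with $r=2$, i.e.\ $\mathbb{E}_\pi\|X\|^2\le (b+d/\beta)/m$.

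For the normalizing constant I would lower bound $\tilde Z$ by integrating only over a Euclidean ball $B(x^\star,\rho)$. Integrating the Hölder bound of \Cref{assump:HolderContinuity} outward from $x^\star$ (where $\nabla f(x^\star)=0$) gives $\tilde f(x) \le \frac{Mc_\alpha^{-1}}{1+\gamma}\|x-x^\star\|^{1+\gamma}\le \frac{Mc_\alpha^{-1}}{1+\gamma}\rho^{1+\gamma}$ on this ball, so that
\begin{align*}
\tilde Z \ge e^{-\beta\frac{Mc_\alpha^{-1}}{1+\gamma}\rho^{1+\gamma}}\,\mathrm{Vol}\big(B(x^\star,\rho)\big) = e^{-\beta\frac{Mc_\alpha^{-1}}{1+\gamma}\rho^{1+\gamma}}\,\frac{\pi^{d/2}\rho^d}{\Gamma(d/2+1)},
\end{align*}
and hence $-\frac1\beta\log\tilde Z \le \frac{Mc_\alpha^{-1}}{1+\gamma}\rho^{1+\gamma} + \frac1\beta\log\frac{\Gamma(d/2+1)}{\pi^{d/2}\rho^d}$.

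Finally I would take $\rho = 1/\beta$ and add the two pieces. This radius choice produces exactly the stated remainder $\frac{Mc_\alpha^{-1}}{1+\gamma}\beta^{-(\gamma+1)}$, while the two logarithmic contributions merge: the factors $\pi^{d/2}$ cancel (turning the entropy constant $2\pi e$ into $2e$), and collecting $\big(\frac{2\pi e}{d}\mathbb{E}_\pi\|X\|^2\big)^{d/2}$, $\Gamma(d/2+1)$, $\beta^d$, and $\pi^{-d/2}$ reproduces $\frac1\beta\log\frac{(2e(b+d/\beta))^{d/2}\Gamma(d/2+1)\beta^d}{(dm)^{d/2}}$ after substituting the moment bound. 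I expect the only real obstacle to be aligning the constants — specifically justifying the maximum-entropy inequality and checking that the $\pi^{d/2}$ from the ball volume cancels the $\pi^{d/2}$ from the Gaussian entropy — rather than any delicate estimate; one could alternatively keep $\rho$ free and optimize, but $\rho=1/\beta$ already gives the clean closed form.
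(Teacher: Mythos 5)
Your proposal is correct and follows essentially the same route as the paper's proof: the identical entropy-plus-log-partition decomposition, the Gaussian maximum-entropy bound combined with the second-moment bound of \Cref{assump:momentsOfInvariant}, and the lower bound on the normalizing constant via the H\"{o}lder smoothness inequality on a ball of radius $\rho=1/\beta$ around $x^\star$ (the paper's Lemma on the normalizing factor). The only differences are cosmetic: you center $f$ at $f^\star$ before decomposing, and you spell out the maximum-entropy step via covariance, AM--GM, and the trace bound, which the paper invokes without detail.
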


% $\mathbf{Remark:}$ The bound in the above lemma does not depend on $k$ and $\eta$. It only scales as a function of $\beta$ and $d$.

Combining Corollary~\ref{cor:distanceFromX1ToX2_2}, Corollary~\ref{cor:distanceFromX1toX3}, Lemma~\ref{lemma:distanceFromX3ToInvariant2}, and Lemma~\ref{lem:distanceInvariantToMinimumInExpectation} proves Theorem~\ref{thm:ultimateTheorem}.

%!TEX root = ./flmc_icml19_arxiv.tex

\section{Additional Remarks}
\label{sec:discuss}

\subsection{Comparison with ULA}

Let us compare this result with those for ULA presented in \cite{raginsky17a}, since they use a similar decomposition (as opposed to \cite{xu2018global}). The last two terms of the right hand side of the bound in Theorem~\ref{thm:ultimateTheorem} have less importance as they can be made arbitrarily small by increasing $\beta$. Besides, for $\beta$ large enough, the first two terms in our bound can be combined in a single term that scales in the order of $k^{1+\max\{\frac{1}{q},\gamma+\frac{\gamma}{q}\}}\eta^{\frac{1}{q}}$. The corresponding term for ULA is given as follows: $k \eta^{5/4}$, cf.\ Section 3.1 of \cite{raginsky17a}. This observation shows that FLA has a worse dependency both on $k$ and $\eta$, which is not surprising and indeed in-line with the existing literature \cite{mikulevivcius2011rate}. 

\subsection{Discussion on smoothness assumptions}

% \umut{this needs to be cleaned.}

% \begin{remark}\textit{How large is $\gamma$?}
%
In this section we will discuss Assumption \Cref{assump:conditionsOnParameters} and provide more intuition on its implications. Let us recall the four constraints given in \Cref{assump:conditionsOnParameters}:%\footnote{The constraint $q<\alpha$ does not play a key role.} %\umut{there is one more? $q<\alpha$}
\begin{gather*}
(1/{p} + 1/{q}) = (1/{p_1} + 1/{q_1}) = 1,\quad \gamma p<1, \quad  \gamma q_1<1, \quad (q-1)p_1<1.
\end{gather*}
We will refer to these conditions as the \emph{first}, \emph{second}, \emph{third}, and \emph{fourth} conditions, respectively.
Our aim is to find a condition on $\gamma$ (more precisely, the maximum value of $\gamma$) such that there exist $p,q,p_1,q_1>0$ satisfying these four conditions.

First, suppose that $p>q_1$. Then, the maximum value of $\gamma$ is decided by the second constraint. Since we want $\gamma$ to be as large as possible, it is natural to choose a smaller $p$. We can observe that, as we decrease $p$, due to the first and the fourth constraints, the value of $q_1$ needs to be increased. If we continue decreasing $p$, then $q_1$ continues to be increased and soon becomes strictly greater than $p$. At this moment, the maximum value of $\gamma$ is decided by the third constraint, not by the second constraint anymore, and from this point on, it is more plausible to decrease $q_1$. % to make this maximum value as large as possible, etc.

By this intuition, it is reasonable to choose $p$ to be equal to $q_1$, which implies that $p_1=q$. Accordingly, the fourth constraint becomes: $(q-1)q<1$.
% \begin{align*}
% (q-1)q<1
% \end{align*}
%
By noting that $q>1$, solving this constraint gives $1<q<(1+\sqrt{5})/2$. Then by the first constraint, we have $p>(3+\sqrt{5})/2$, and the second constraint gives $\gamma<1/p<(3-\sqrt{5})/2$.

This upper bound for $\gamma$ is a number between $0.38$ and $0.39$ and tells us that there exist $p,q,p_1,q_1$ satisfying the four constraints if and only if $0\leq\gamma<(3-\sqrt{5})/2$.

Let us take a closer look at \Cref{thm:ultimateTheorem}. Since $\gamma(q+1)<(3-\sqrt{5})(3+\sqrt{5})/4=1$, we have $\gamma+\gamma/q=\gamma(q+1)/q<1/q$ Hence,
\begin{align*}
1+\max\{1/q,\gamma+\gamma/q\}=1+1/q.
\end{align*}
% \begin{gather*}
% p_{1,\text{min}}\triangleq\min\big\{1, \> \frac{1}{p_1}+\frac{\gamma}{2}, \> \frac{q-1}{2}+\frac{1}{q_1},  \> \frac{q-1}{2}+\frac{\gamma}{2}\big\} \\
% p_{2,\text{min}}\triangleq\min\big\{\frac{1}{p_1},\frac{q-1}{2}\big\}, \quad p_{3,\text{min}}\triangleq\min\big\{\frac{1}{q_1},\frac{\gamma}{2}\big\}.
% \end{gather*}
% By the four constraints, we have $q-1<1/p_1$, $\gamma<1/q_1$. Then, the terms $p_{1,min}$, $p_{2,min}$ and $p_{3,min}$ can be calculated as follows:
% \begin{align*}
% p_{1,\text{min}}=\frac{q-1}{2}+\frac{\gamma}{2}, && p_{2,\text{min}}=\frac{q-1}{2}, && p_{3,\text{min}}=\frac{\gamma}{2}.
% \end{align*}
Let $\varepsilon_1$ and $\varepsilon_2$ be positive numbers such that
\begin{align*}
&1/q-\varepsilon_1 = 2/(1+\sqrt{5})=(\sqrt{5}-1)/2,\\
&\gamma + \varepsilon_2 = (3-\sqrt{5})/2.
\end{align*}
then, if $q=p_1$ is approximately equal to $(1+\sqrt{5})/2$ and $\gamma$ is approximately equal to $(3-\sqrt{5})/2$, we imply that $\varepsilon_1$ and $\varepsilon_2$ become very small and
\begin{align*}
&1/q\approx(\sqrt{5}-1)/2,\\
&1/q+\gamma/(\alpha q)\approx(\sqrt{5}-1)/2+(\sqrt{5}-2)/\alpha,\\
&(q-1)\gamma/(q\alpha)\approx(7-3\sqrt{5})/(2\alpha).
\end{align*}
% \begin{align*}
% q-1=1/p_1-\epsilon_1=1-1/q_1-\epsilon_1=1-\gamma-\epsilon_2-\epsilon_1.
% \end{align*}
% The terms $p_{1,min}$ and $p_{1,min}$ now become:
% \begin{align*}
% p_{1,min}=\frac{1-\epsilon_1-\epsilon_2}{2}, && p_{2,min}=\frac{1-\gamma-\epsilon_2-\epsilon_1}{2}.
% \end{align*}
% %
% Note that, if $q=p_1$ is approximately $(1+\sqrt{5})/2$ and $\gamma$ is approximately $(3-\sqrt{5})/2$ then $\epsilon_1$ and $\epsilon_2$ become very small. 

As a final remark on this smoothness condition, we note that similar constraints are imposed on L\'{e}vy-driven SDEs in other studies as well \cite{panloup2008recursive,simsekli17a}. This is due to the fact that such SDEs often require better-behaved drifts in order to be able to compensate the jumps incurred by the L\'{e}vy motion.

% \end{remark}

%!TEX root = ./flmc_icml19_arxiv.tex

\section{Extensions}
\label{sec:extensions}

\subsection{Guarantees for Posterior Sampling}
\label{sec:sampling}

In this section, we will discuss the implications of our results in the classical Monte Carlo sampling context. If our aim is only to draw samples from the distribution $\pi$, then, for a fixed $k$, we can bound the Wasserstein distance between the law of $W^k$ and $\pi$. The result is stated as follows:
\begin{corollary}\label{thm:WassersteinForSampling}
For $0<\eta\leq\frac{m}{M^2}$, the following bound holds:
\begin{align*}
\mathcal{W}_{q}(\mu_{2t}, \pi)\leq& C\Big(k^{\frac{\max\{2,q+\gamma\}}{q}}\eta^{\frac{1}{q}} + k^{\frac{\max\{2,q+\gamma\}}{q}}\eta^{\frac{1}{q}+\frac{\gamma}{q\alpha}}\beta^{-\frac{\gamma(q-1)}{q\alpha}}d^{\frac{1}{q}} + \beta e^{- \lambda_* \frac{k\eta}{\beta}}\Big).
\end{align*}
\end{corollary}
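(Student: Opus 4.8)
The plan is to exploit the fact that $\mathcal{W}_q$ is a genuine metric and to insert the two intermediate processes $X_1$ and $X_3$ via the triangle inequality:
\begin{align*}
\mathcal{W}_q(\mu_{2t},\pi) \leq \mathcal{W}_q(\mu_{2t},\mu_{1t}) + \mathcal{W}_q(\mu_{1t},\mu_{3t}) + \mathcal{W}_q(\mu_{3t},\pi).
\end{align*}
Each of the three pieces is already controlled earlier in the paper: the first by Corollary~\ref{cor:BoundWasserstein}, the second by Corollary~\ref{cor:WassersteinOfX1X3}, and the third by the ergodicity hypothesis \Cref{assump:ergodicity}, which applies here because $q<\alpha$ and hence yields $\mathcal{W}_q(\mu_{3t},\pi)\leq C\beta e^{-\lambda_* t/\beta}$. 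Since the two corollaries bound the $q$-th power $\mathcal{W}_q^q$ rather than $\mathcal{W}_q$ itself, the first step of the calculation is simply to take $q$-th roots of those two estimates.

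Next I would use the subadditivity of $s\mapsto s^{1/q}$, valid because $q>1$ makes the map concave and vanishing at the origin, to split each rooted bound into a sum of monomials. From Corollary~\ref{cor:BoundWasserstein} this produces $k^{2/q}\eta^{1/q}$ together with $k^{2/q}\eta^{1/q+\gamma/(\alpha q)}\beta^{-\gamma(q-1)/(\alpha q)}d^{1/q}$, and from Corollary~\ref{cor:WassersteinOfX1X3} it produces $k^{(q+\gamma)/q}\eta^{1/q}$ together with $k^{(q+\gamma)/q}\eta\,\beta^{-(q-1)/(\alpha q)}d^{1/q}$. The two $d$-free monomials share the factor $\eta^{1/q}$ and differ only in their power of $k$; collecting them yields the common exponent $\max\{2,q+\gamma\}/q$, which is exactly the first term of the claimed bound. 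The same maximal $k$-exponent then dominates the $d$-carrying monomials and produces the second term, while the unchanged exponential factor $\beta e^{-\lambda_* k\eta/\beta}$ (using $t=k\eta$) supplies the third term.

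The routine but slightly delicate point is verifying that the $d$-carrying monomial coming from $\mathcal{W}_q(\mu_{1t},\mu_{3t})$, which carries $\eta^{1}\beta^{-(q-1)/(\alpha q)}$, is absorbed by the retained term $\eta^{1/q+\gamma/(\alpha q)}\beta^{-\gamma(q-1)/(\alpha q)}d^{1/q}$ under the standing range $0<\eta\leq m/M^2$ with $\eta<1$, so that smaller powers of $\eta$ dominate. This comparison of the $\eta$- and $\beta$-exponents is where I expect the main bookkeeping obstacle: one must confirm the correct ordering so that the two $d$-terms merge into a single monomial rather than remaining separate. Once that ordering is settled, absorbing all multiplicative constants into a single $C$ finishes the proof.
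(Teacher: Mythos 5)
Your proposal is correct and follows essentially the same route as the paper's proof: triangle inequality through $\mu_{1t}$ and $\mu_{3t}$, $q$-th roots of Corollary~\ref{cor:BoundWasserstein} and Corollary~\ref{cor:WassersteinOfX1X3}, subadditivity of $s\mapsto s^{1/q}$ (Lemma~\ref{lemma:anUsefulIneq}), then collecting the maximal $k$-power and minimal $\eta$- and $\beta^{-1}$-powers, with \Cref{assump:ergodicity} supplying the exponential term. The ordering you flag does hold: $\tfrac{1}{q}+\tfrac{\gamma}{\alpha q}<1$ since $\gamma<\tfrac{q-1}{q}$ and $\alpha>1$, and $\tfrac{\gamma(q-1)}{\alpha q}<\tfrac{q-1}{\alpha q}$ since $\gamma<1$, so with $\eta<1$ and $\beta\geq1$ the two $d$-carrying monomials indeed merge as claimed.
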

As a typical use case, we can consider Bayesian posterior sampling, where we choose $\beta =1$ and 
\begin{align*}
f(X) = -(\log \mathrm{P}(Y|X) + \log \mathrm{P}(X)).
\end{align*}
Here, $Y$ denotes a dataset, $\mathrm{P}(Y|X)$ is the likelihood, $\mathrm{P}(X)$ denotes the prior density, and the target distribution $\pi$ becomes the posterior distribution with density $\mathrm{P}(X|Y)$.

\subsection{Extension to Stochastic Gradients}
\label{sec:stoch_grad}
% \umut{Can we quickly give a modified Corollary~\ref{cor:distanceFromX1ToX2_2}?}
In many machine learning problems, the function $f$ to be minimized has the following form:
\begin{align*}
f(x)\triangleq\frac{1}{n}\sum_{i=1}^{n}f^{(i)}(x),
\end{align*}
where $i$ denotes different data points and $n$ is the total number of data points. In large-scale applications, $n$ can be very large, which renders the gradient computation infeasible. Therefore, at iteration $k$, we often approximate $\nabla f$ by its stochastic version that is defined as follows:
\begin{align*}
\nabla f_k(x)\triangleq\frac{1}{n_s}\sum_{i\in \Omega_k } \nabla f^{(i)}(x),
\end{align*}
where $\Omega_k$ is a random subset of $\{1,\ldots,n\}$ with $\vert \Omega_k\vert=n_s\ll n$. The quantity $\nabla f_k(x)$ is often referred to as the `stochastic gradient'. If the stochastic gradients satisfy a moment condition, then we have the following results:
\begin{theorem}\label{thm:StochGradForX1X2}
Assume that for each $i$, the function $x\mapsto f^{(i)}(x)$ satisfies the conditions \Cref{assump:boundedGradAtZero}-\Cref{assump:momentsOfInvariant}. Let us replace $\nabla f$ by $\nabla f_k$ in \eqref{eqn:fla}. If, in addition, there exists $\delta\in[0,1)$ for any $k$, such that
\begin{align*}
\mathbb{E}_{\Omega_k}\Vert c_\alpha(\nabla f(x) - \nabla f_k(x))\Vert^{q_1}\leq&\delta^{q_1}M^{q_1}\Vert x\Vert^{\gamma q_1}, 
\end{align*}
for $x\in\mathbb{R}^d$, then we have the following bound:
\begin{align*}
\mathcal{W}_{q}^{q}(\mu_{1t}, \mu_{2t})\leq& C(1+\delta)(k^2\eta + k^2\eta^{1+\gamma/\alpha}\beta^{-\gamma(q-1)/\alpha}d).
\end{align*}
\end{theorem}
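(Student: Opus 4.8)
The plan is to follow the same structure used to prove Theorem~\ref{thm:boundOfWassersteinShort} (the exact-gradient bound on $\mathcal{W}_q^q(\mu_{1t},\mu_{2t})$), and to track how the stochastic-gradient error term enters each inequality. First I would invoke Lemma~\ref{lemma:WassersteinFormula} with $\lambda=q$, $i=1$, $j=2$, which expresses $\mathcal{W}_q^q(\mu_{1t},\mu_{2t})$ as an infimum over couplings of
\begin{align*}
\mathbb{E}\Big[\int_0^t q\,\Vert \Delta X_{12}(s)\Vert^{q-2}\langle \Delta X_{12}(s),\Delta b_{12}(s-)\rangle\,\mathrm{d}s\Big].
\end{align*}
With stochastic gradients, the drift $b_2$ now uses $\nabla f_k$ rather than $\nabla f$, so $\Delta b_{12}(s-)$ splits into the \emph{exact-gradient} discrepancy $-c_\alpha(\nabla f(X_1(s-))-\nabla f(X_2(j\eta)))$ plus a \emph{stochastic-gradient} perturbation $-c_\alpha(\nabla f(X_2(j\eta))-\nabla f_k(X_2(j\eta)))$ on each interval $[j\eta,(j+1)\eta[$.

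The key step is to handle the inner product by Cauchy--Schwarz, bounding $\langle \Delta X_{12}(s),\Delta b_{12}(s-)\rangle \leq \Vert\Delta X_{12}(s)\Vert\,\Vert\Delta b_{12}(s-)\Vert$, and then splitting $\Vert\Delta b_{12}\Vert$ via the triangle inequality into the two pieces above. The first piece is controlled exactly as in the proof of Theorem~\ref{thm:boundOfWassersteinShort}, using the Hölder continuity in \Cref{assump:HolderContinuity} together with \Cref{assump:conditionsOnParameters} to apply Hölder's and Minkowski's inequalities, and using the moment bounds on the processes. For the second piece, I would take expectations and apply the stochastic-gradient moment hypothesis $\mathbb{E}_{\Omega_k}\Vert c_\alpha(\nabla f(x)-\nabla f_k(x))\Vert^{q_1}\leq \delta^{q_1}M^{q_1}\Vert x\Vert^{\gamma q_1}$; the exponent $q_1$ is chosen precisely so that this matches the Hölder conjugate structure already imposed by \Cref{assump:conditionsOnParameters} (note $\gamma q_1<1$ is available for the moment estimates). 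Because the stochastic term carries exactly the same $\Vert x\Vert^{\gamma}$-type growth as the exact-gradient term, it reproduces the same $k$- and $\eta$-dependence, and the factor $\delta$ survives as a multiplicative constant, yielding the additive $(1+\delta)$ prefactor.

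Concretely, I would collect the resulting bound into a Grönwall-type or direct summation estimate over the $k$ subintervals: each interval contributes a term controlled by the $\eta$-displacement of the Lévy-driven process and the Hölder modulus, and summing over $j=0,\dots,k-1$ produces the $k^2\eta$ and $k^2\eta^{1+\gamma/\alpha}\beta^{-\gamma(q-1)/\alpha}d$ leading terms. The $\alpha$-stable increment moments, finite for orders below $\alpha$, give the $\eta^{\gamma/\alpha}$ and $\beta^{-\gamma(q-1)/\alpha}d$ factors just as in the exact-gradient case. The main obstacle I anticipate is bookkeeping the interplay between the two Hölder pairs $(p,q)$ and $(p_1,q_1)$: one must ensure that applying Hölder to separate $\Vert\Delta X_{12}\Vert^{q-1}$ from the stochastic-gradient factor leaves both resulting moments finite, which is exactly where the constraints $\gamma q_1<1$ and $(q-1)p_1<1$ are needed, and where the conditioning on $\Omega_k$ must be carefully interchanged with the time integral and the expectation over the Lévy path. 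Once the moment finiteness is secured under \Cref{assump:conditionsOnParameters}, the stochastic-gradient contribution is absorbed into the existing bound with only the $(1+\delta)$ scaling, completing the proof.
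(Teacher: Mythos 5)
Your proposal is correct and follows essentially the same route as the paper's proof: apply Lemma~\ref{lemma:WassersteinFormula}, use Cauchy--Schwarz and H\"{o}lder with the pair $(p_1,q_1)$, split the drift discrepancy via Minkowski into the exact-gradient part (handled as in Theorem~\ref{thm:boundOfWassersteinShort}) and the stochastic-gradient part (handled by the $\delta^{q_1}M^{q_1}\Vert x\Vert^{\gamma q_1}$ hypothesis plus the moment estimates of Lemma~\ref{lemma:expecataionBound}), then sum over the $k$ subintervals and keep leading terms to get the $(1+\delta)$ prefactor. The only step you leave implicit, which the paper states explicitly, is the quick check that $f_k$ inherits assumptions \Cref{assump:boundedGradAtZero}--\Cref{assump:momentsOfInvariant} with the same constants, so that the moment bounds on $X_2$ remain valid under the stochastic-gradient dynamics.
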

Similar to our previous bounds, we can use Theorem~\ref{thm:StochGradForX1X2} for obtaining a bound for the expected discrepancy, given as follows:
\begin{corollary}\label{cor:StochGradForX1X2}
Under the same assumptions as in \Cref{thm:StochGradForX1X2}, we have the following bound:
\begin{align*}
\big\vert\mathbb{E}[f(X_1 (k&\eta))] - \mathbb{E}[f(X_2(k\eta))]\big\vert\leq C(1+\delta)\Big(k^{1+\frac{1}{q}}\eta^{\frac{1}{q}}  +  k^{1+\frac{1}{q}}\eta^{\frac{1}{q}+\frac{\gamma}{\alpha q}}\beta^{-\frac{(q-1)\gamma}{\alpha q}}d\Big).
\end{align*}
\end{corollary}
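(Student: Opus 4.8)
The plan is to obtain this bound by combining the stochastic-gradient Wasserstein estimate of Theorem~\ref{thm:StochGradForX1X2} with the transfer inequality of Lemma~\ref{lemma:DifferenceExpectation}, following verbatim the route by which Corollary~\ref{cor:distanceFromX1ToX2_2} was derived from Corollary~\ref{cor:BoundWasserstein} in the exact-gradient case. The only structural change is that Theorem~\ref{thm:StochGradForX1X2} now plays the role of Corollary~\ref{cor:BoundWasserstein}, carrying along the extra factor $(1+\delta)$.

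First I would check that $g=f$ meets the growth hypothesis of Lemma~\ref{lemma:DifferenceExpectation}. By the triangle inequality together with \Cref{assump:boundedGradAtZero} and \Cref{assump:HolderContinuity},
\[
c_\alpha\Vert\nabla f(x)\Vert \leq c_\alpha\Vert\nabla f(0)\Vert + c_\alpha\Vert\nabla f(x)-\nabla f(0)\Vert \leq B + M\Vert x\Vert^{\gamma},
\]
so $\Vert\nabla f(x)\Vert \leq c_1\Vert x\Vert^{\gamma} + c_2$ with $c_1 = M/c_\alpha$ and $c_2 = B/c_\alpha$, as required. I would then invoke the fractional-moment estimates of order $\gamma p$ on $X_1(k\eta)$ and $X_2(k\eta)$, which guarantee $\max\{(\mathbb{E}\Vert X_1(k\eta)\Vert^{\gamma p})^{1/p},(\mathbb{E}\Vert X_2(k\eta)\Vert^{\gamma p})^{1/p}\}<\infty$; in the stochastic-gradient regime these follow from \Cref{assump:dissipative} combined with the moment condition on $\nabla f_k$ assumed in Theorem~\ref{thm:StochGradForX1X2}, exactly as in the deterministic setting. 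With both hypotheses verified, Lemma~\ref{lemma:DifferenceExpectation} applied with $\mu=\mu_{1t}$, $\nu=\mu_{2t}$ at $t=k\eta$ gives
\[
\big\vert\mathbb{E}[f(X_1(k\eta))] - \mathbb{E}[f(X_2(k\eta))]\big\vert \leq C\,\mathcal{W}_{q}(\mu_{1t},\mu_{2t}).
\]

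Next I would substitute the bound of Theorem~\ref{thm:StochGradForX1X2} for $\mathcal{W}_{q}^{q}(\mu_{1t},\mu_{2t})$ and take its $1/q$-th power. Using subadditivity $(a+b)^{1/q}\leq a^{1/q}+b^{1/q}$ (valid since $1/q<1$), the two summands separate, and the roots compute to $(k^{2}\eta)^{1/q}=k^{2/q}\eta^{1/q}$ and $(k^{2}\eta^{1+\gamma/\alpha}\beta^{-\gamma(q-1)/\alpha}d)^{1/q}=k^{2/q}\eta^{1/q+\gamma/(\alpha q)}\beta^{-(q-1)\gamma/(\alpha q)}d^{1/q}$. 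Since $q>1$ we have $2/q\leq 1+1/q$, hence $k^{2/q}\leq k^{1+1/q}$ for $k\geq 1$, and likewise $d^{1/q}\leq d$ for $d\geq 1$; these crude bounds bring the exponents into the stated form. Finally the prefactor $(C(1+\delta))^{1/q}$ is controlled by $C^{1/q}(1+\delta)$, because $(1+\delta)^{1/q}\leq 1+\delta$ when $1+\delta\geq 1$ and $1/q\leq 1$, and the remaining numerical constants are absorbed into a single $C$, yielding the claim.

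I do not anticipate a genuine obstacle: the argument is a repetition of the exact-gradient derivation with Theorem~\ref{thm:StochGradForX1X2} in place of Corollary~\ref{cor:BoundWasserstein}. The one point deserving care is the finiteness of the fractional moments of $X_2(k\eta)$ under stochastic gradients, which is what makes Lemma~\ref{lemma:DifferenceExpectation} applicable; this is precisely where the moment condition on $\nabla f_k$ enters, and since it is the same estimate that already underlies the proof of Theorem~\ref{thm:StochGradForX1X2}, it transfers without modification.
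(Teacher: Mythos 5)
Your overall route---apply Lemma~\ref{lemma:DifferenceExpectation}, substitute the Wasserstein bound of Theorem~\ref{thm:StochGradForX1X2}, take the $1/q$-th power, and carry the $(1+\delta)$ factor through via $(1+\delta)^{1/q}\leq 1+\delta$---is the paper's route. But there is one genuine gap: you absorb the prefactor produced by Lemma~\ref{lemma:DifferenceExpectation}, namely $M\big(\mathbb{E}\Vert X_1(k\eta)\Vert^{\gamma p}\big)^{1/p}+M\big(\mathbb{E}\Vert X_2(k\eta)\Vert^{\gamma p}\big)^{1/p}+B$, into a constant $C$ on the sole ground that these fractional moments are finite. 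Finiteness at each fixed time is not enough: the corollary's $C$ must be independent of $k$ and $\eta$, whereas Lemma~\ref{lemma:expectationBoundOfX1} and Lemma~\ref{lemma:expecataionBound} show these moments grow with $k$ --- in the notation of \Cref{cor:distanceFromX1toX2}, the prefactor is $Q_3+k^{1/p}P_3(\eta)$, not a constant. So your displayed inequality $\big\vert\mathbb{E}[f(X_1(k\eta))]-\mathbb{E}[f(X_2(k\eta))]\big\vert\leq C\,\mathcal{W}_q(\mu_{1t},\mu_{2t})$ with $k$-independent $C$ is unjustified as written, and the step where you "crudely" inflate $k^{2/q}$ to $k^{1+1/q}$ is not harmless slack you are free to spend: it is precisely the room the growing prefactor must occupy.

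The conclusion survives, and for a structural reason you should make explicit: since $1/p+1/q=1$, the growing part $k^{1/p}P_3(\eta)$ of the prefactor multiplies the $k^{2/q}$ coming from Theorem~\ref{thm:StochGradForX1X2} to give $k^{1/p+2/q}=k^{1+1/q}$ --- the very exponent you reached by inflation --- while the extra factors inside $P_3(\eta)$ (a $d^{1/p}$, positive powers of $\eta$, negative powers of $\beta$) combine with the $d^{1/q}$, $\eta^{1/q+\gamma/(\alpha q)}$ and $\beta^{-(q-1)\gamma/(\alpha q)}$ already present so that, for $\eta\leq 1$, $\beta\geq 1$, $d\geq 1$, every cross term is dominated by the claimed right-hand side (e.g.\ $d^{1/p}\,d^{1/q}=d$, matching the stated linear dependence on $d$). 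This bookkeeping is exactly what the paper's proof does: it keeps the full moment-dependent prefactor and repeats the derivations of \Cref{cor:distanceFromX1toX2} and \Cref{cor:BoundWasserstein}, with Theorem~\ref{thm:StochGradForX1X2} substituted for the exact-gradient Wasserstein estimate, mirroring \Cref{cor:distanceFromX1ToX2_2}. Patch your write-up by replacing "constant $C$" with the $k$-dependent prefactor $Q_3+k^{1/p}P_3(\eta)$ and running this accounting; everything else in your proposal, including the verification of the gradient-growth hypothesis via Lemma~\ref{lemma:gradientBound} and the remark that the moment estimates for $X_2$ transfer to the stochastic-gradient setting, is sound.
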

These results show that the guarantees for FLA will still hold even under the presence of stochastic gradients. 

\section{Conclusion}
% Recent studies on diffusion-based sampling methods have shown that Langevin Monte Carlo (LMC) algorithms can be beneficial in non-convex optimization, and rigorous theoretical guarantees have been proven for both asymptotic and finite-time regimes. Algorithmically, LMC-based algorithms  whose variance has a particular form. 
%
In this study, we focused on FLA, which is a recent extension of ULA, and can be seen as a perturbed version of the gradient descent algorithm with heavy-tailed $\alpha$-stable noise. We analyzed the non-asymptotic behavior of FLMC for non-convex optimization and proved finite-time bounds for its expected suboptimality. Our results agreed with the existing related work, and showed that the weak-error of FLA increases faster than ULA, which suggests using smaller step-sizes in FLA. We finally extended our results to the case where exact gradients are replaced by stochastic gradients and showed that similar results hold in this setting as well. A clear future direction implied by our results is the investigation of the local behavior of FLA.

%which can incur large jumps. % and therefore gives the algorithm a particular behavior. 
%
% The $\alpha$-stable distribution appears as the limiting distribution in the generalized central limit theorem and contain the Gaussian distribution as a special case. 
%
% it has been empirically demonstrated that the choice of $\alpha$-stable noise provides several advantages in modern machine learning problems, both in optimization and sampling contexts. 
% As opposed to ULA, only asymptotic convergence properties of FLMC have been yet established. In this study,   

\section*{Acknowledgments}

This work is partly supported by the French National Research Agency (ANR) as a part of the FBIMATRIX (ANR-16-CE23-0014) and KAMoulox (ANR-15-CE38-0003-01) projects, and by the industrial chair Machine Learning for Big Data from T\'{e}l\'{e}com ParisTech.

\clearpage
\newpage

\bibliography{./figures/levy,./figures/hamcmc}
\bibliographystyle{unsrt}

\clearpage
\newpage
\section{Appendix}
%!TEX root = ./flmc_icml19_arxiv.tex

% \setcounter{equation}{0}
% \setcounter{figure}{0}
% \setcounter{table}{0}
% \setcounter{page}{1}
% %\makeatletter
 \renewcommand{\theequation}{S\arabic{equation}}
 \renewcommand{\thefigure}{S\arabic{figure}}
 \renewcommand{\thelemma}{S\arabic{lemma}}
 \renewcommand{\thecorollary}{S\arabic{corollary}}
 \renewcommand{\thetheorem}{S\arabic{theorem}}
 \renewcommand{\thesection}{S\arabic{section}}

\subsection{Proof of Lemma~\ref{lemma:earlyLemma} }

\begin{proof}
% We provide the proof for the scalar case where $X \in \mathbb{R}$ for clarity. The multidimensional proof is straightforward and will be similar to proof of Theorem 1 of \cite{ye_sfhmc}. 
Let $q(X,t)$ be the probability density of $X(t)$. By Proposition 1 in \cite{schertzer2001fractional} (see also Section 7 of the same study), the fractional Fokker-Planck equation associated with \eqref{eqn:FracLangevinGibbs} is given as follows:

\begin{align*}
% \partial_tq(X,t)=-\sum_{i=1}^d\frac{\partial[(b(X,\alpha))_i q(X,t)]}{\partial X_i}- \beta^{-1}\mathcal{D}^\alpha q(X,t).
\partial_tq(X,t)=-\sum_{i=1}^d\frac{\partial[(b(X,\alpha))_i q(X,t)]}{\partial X_i}- \beta^{-1} \sum_{i=1}^d \mathcal{D}_{X_i}^\alpha q(X,t).
% \\-\partial_X[b(X,\alpha)q(X,t)] - \beta^{-1}\mathcal{D}^\alpha q(X,t).
\end{align*}
Using definition \eqref{eqn:btrue} of $b$, we have

\begin{align*}
\partial_tq(X,t)
% =&-\partial_X[\frac{\beta^{-1}\mathcal{D}^{\alpha-2}(-\beta\phi(X)\partial_Xf(X))}{\phi(X)}q(X,t)] - \beta^{-1}\mathcal{D}^\alpha q(X,t)
% \\&=-\partial_X[\frac{\beta^{-1}\mathcal{D}^{\alpha-2}(-\beta\pi(X)\partial_Xf(X))}{\pi(X)}q(X,t)] - \beta^{-1}\mathcal{D}^\alpha q(X,t)
% \\&=-\partial_X[\frac{\beta^{-1}\mathcal{D}^{\alpha-2}(\partial_X\pi(X))}{\pi(X)}q(X,t)] - \beta^{-1}\mathcal{D}^\alpha q(X,t)\\
=& -\sum_{i=1}^d \frac{\partial}{\partial_{X_i}}[\frac{\beta^{-1}\mathcal{D}_{X_i}^{\alpha-2}(-\beta\phi(X)\frac{\partial f(X)}{\partial_{X_i}})}{\phi(X)}q(X,t)] - \beta^{-1}\sum_{i=1}^d \mathcal{D}_{X_i}^\alpha q(X,t)\\
=& -\sum_{i=1}^d \frac{\partial}{\partial_{X_i}}[\frac{\beta^{-1}\mathcal{D}_{X_i}^{\alpha-2}(-\beta\pi(X)\frac{\partial f(X)}{\partial_{X_i}})}{\pi(X)}q(X,t)] - \beta^{-1}\sum_{i=1}^d \mathcal{D}_{X_i}^\alpha q(X,t)\\
=& -\sum_{i=1}^d \frac{\partial}{\partial_{X_i}}[\frac{\beta^{-1}\mathcal{D}_{X_i}^{\alpha-2}(\frac{\partial\pi(X)}{\partial_{X_i}})}{\pi(X)}q(X,t)] - \beta^{-1}\sum_{i=1}^d \mathcal{D}_{X_i}^\alpha q(X,t).
\end{align*}
Here, we used $\pi(X)=\phi(X)/\int\phi(X)\text{d}X$ in the second equality and $-\beta\frac{\partial}{\partial X_i} f(X)=\frac{\partial}{\partial X_i}\log\pi(X)=\frac{\partial\pi(X)/\partial X_i}{\pi(X)}$ in the third equality. Next, by replacing $q$ by $\pi$ on the right hand side of the above equality, we have:

\begin{align*}
-\sum_{i=1}^d \frac{\partial}{\partial_{X_i}}[\frac{\beta^{-1}\mathcal{D}_{X_i}^{\alpha-2}(\frac{\partial\pi(X)}{\partial_{X_i}})}{\pi(X)}\pi(X,t)] - \beta^{-1}\sum_{i=1}^d \mathcal{D}_{X_i}^\alpha &\pi(X,t)\\
=& -\sum_{i=1}^d \frac{\partial}{\partial_{X_i}}[\beta^{-1}\mathcal{D}_{X_i}^{\alpha-2}(\frac{\partial\pi(X)}{\partial_{X_i}})] - \beta^{-1}\sum_{i=1}^d \mathcal{D}_{X_i}^\alpha \pi(X,t)\\
=&-\sum_{i=1}^d \frac{\partial^2}{\partial_{X_i^2}}[\beta^{-1}\mathcal{D}_{X_i}^{\alpha-2}(\pi(X))] - \beta^{-1}\sum_{i=1}^d \mathcal{D}_{X_i}^\alpha \pi(X,t)\\
=& \sum_{i=1}^d \mathcal{D}_{X_i}^2[\beta^{-1}\mathcal{D}_{X_i}^{\alpha-2}(\pi(X))] - \beta^{-1}\sum_{i=1}^d \mathcal{D}_{X_i}^\alpha \pi(X,t)\\
=& \sum_{i=1}^d \mathcal{D}_{X_i}^{\alpha}[\beta^{-1}\pi(X)] - \beta^{-1}\sum_{i=1}^d \mathcal{D}_{X_i}^\alpha \pi(X,t) = 0.
% -\partial_X[\frac{\beta^{-1}\mathcal{D}^{\alpha-2}(\partial_X\pi(X))}{\pi(X)}\pi(X)] - \beta^{-1}\mathcal{D}^\alpha \pi(X)&=-\partial_X[\beta^{-1}\mathcal{D}^{\alpha-2}(\partial_X\pi(X))] - \beta^{-1}\mathcal{D}^\alpha \pi(X)
% \\&=-\partial_X^2[\beta^{-1}\mathcal{D}^{\alpha-2}\pi(X)] - \beta^{-1}\mathcal{D}^\alpha \pi(X)
% \\&=-\mathcal{D}^2[\beta^{-1}\mathcal{D}^{\alpha-2}\pi(X)] - \beta^{-1}\mathcal{D}^\alpha \pi(X)
% \\&=\beta^{-1}\mathcal{D}^{\alpha}\pi(X) - \beta^{-1}\mathcal{D}^\alpha \pi(X)
% \\&=0.
\end{align*}
Here, we used Proposition 1 in \cite{csimcsekli2017fractional}, $\mathcal{D}^2u(x)=-\frac{\partial}{\partial x^2}u(x)$, and the semi-group property of the Riesz derivation $\mathcal{D}^a\mathcal{D}^bu(x)=\mathcal{D}^{a+b}u(x)$. This proves that $\pi$ is an invariant measure of the Markov process $(X(t))_{t\geq0}$. %If $b(X,\alpha)$ is Lipschitz continuous, then by \cite{schertzer2001fractional}, it implies that $\pi$ is the unique invariant measure of $(X(t))_{t\geq0}$.
\end{proof}

\subsection{Proof of Lemma~\ref{lemma:DifferenceExpectation}}
In this section, we precise the statement of Lemma~\ref{lemma:DifferenceExpectation} and provide the proof.

\begin{lemma} Let $V$ and $W$ be two random variables on $\mathbb{R}^d$ which have $\mu$ and $\nu$ as the probability measures and let $g$ be a function in $C^1(\mathbb{R}^d,\mathbb{R})$. Assume that for some $c_1>0,c_2\geq 0$ and $0\leq\gamma<1$,
\begin{align*}
\Vert \nabla g(w)\Vert\leq c_1\Vert w\Vert^{\gamma} + c_2,&&\forall w\in\mathbb{R}^d
\end{align*}
then the following bound holds:
\begin{align*}
\Big\vert\int g\text{d}\mu - \int g\text{d}\nu\Big\vert\leq \Big(c_1\Big(\mathbb{E}_{\mathbf{P}}\Vert W\Vert^{\gamma p}\Big)^{\frac{1}{p}}+c_1\Big(\mathbb{E}_{\mathbf{P}}\Vert V\Vert^{\gamma p}\Big)^{\frac{1}{p}}+c_2\Big)\mathcal{W}_{q}(\mu, \nu).
\end{align*}
\end{lemma}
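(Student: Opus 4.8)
The plan is to prove this by coupling the two measures optimally and applying the fundamental theorem of calculus along the straight line connecting the coupled points, so that the gradient bound can be exploited directly. First I would fix an arbitrary coupling $\mathbf{P}$ of $\mu$ and $\nu$, i.e.\ a joint law of $(V,W)$ with $V\sim\mu$ and $W\sim\nu$. Then I write
\begin{align*}
\Big|\int g\,\mathrm{d}\mu - \int g\,\mathrm{d}\nu\Big| = \big|\mathbb{E}_{\mathbf{P}}[g(V)-g(W)]\big| \leq \mathbb{E}_{\mathbf{P}}\big|g(V)-g(W)\big|.
\end{align*}
Since $g\in C^1$, for each realization I express the increment as $g(V)-g(W)=\int_0^1 \langle \nabla g(W+t(V-W)), V-W\rangle\,\mathrm{d}t$, and Cauchy--Schwarz gives the pointwise estimate $|g(V)-g(W)|\leq \|V-W\|\int_0^1\|\nabla g(W+t(V-W))\|\,\mathrm{d}t$.

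Next I would insert the growth assumption $\|\nabla g(x)\|\leq c_1\|x\|^\gamma + c_2$ evaluated at the interpolation point $x_t = W+t(V-W)$. Using convexity of $\|\cdot\|$ (or simply the triangle inequality $\|x_t\|\leq (1-t)\|W\|+t\|V\|$ and monotonicity of $r\mapsto r^\gamma$), together with the elementary bound $\|x_t\|^\gamma\leq \|W\|^\gamma+\|V\|^\gamma$ valid for $0\leq\gamma<1$, I can dominate the integrand uniformly in $t$ by $c_1(\|W\|^\gamma+\|V\|^\gamma)+c_2$. This removes the $t$-integral and yields
\begin{align*}
\big|g(V)-g(W)\big| \leq \|V-W\|\big(c_1\|W\|^\gamma + c_1\|V\|^\gamma + c_2\big).
\end{align*}
Taking $\mathbb{E}_{\mathbf{P}}$ and applying H\"{o}lder's inequality with exponents $p$ and $q$ (where $1/p+1/q=1$) splits the expectation into $\big(\mathbb{E}_{\mathbf{P}}\|V-W\|^q\big)^{1/q}$ times $\big(\mathbb{E}_{\mathbf{P}}(c_1\|W\|^\gamma+c_1\|V\|^\gamma+c_2)^p\big)^{1/p}$. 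I would then bound the second factor by Minkowski's inequality, distributing the $L^p$-norm over the three summands to obtain $c_1(\mathbb{E}_{\mathbf{P}}\|W\|^{\gamma p})^{1/p}+c_1(\mathbb{E}_{\mathbf{P}}\|V\|^{\gamma p})^{1/p}+c_2$, which matches the target coefficient exactly.

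Finally I would take the infimum over all couplings $\mathbf{P}$: the factor $\big(\mathbb{E}_{\mathbf{P}}\|V-W\|^q\big)^{1/q}$ infimizes to $\mathcal{W}_q(\mu,\nu)$ by definition, while the marginal-moment factor is coupling-independent (it only sees $V\sim\mu$ and $W\sim\nu$ separately), so it passes through the infimum unchanged. The main technical point to handle carefully is the subadditivity estimate $\|x_t\|^\gamma\leq\|W\|^\gamma+\|V\|^\gamma$ and ensuring the moment quantities $(\mathbb{E}\|W\|^{\gamma p})^{1/p}$, $(\mathbb{E}\|V\|^{\gamma p})^{1/p}$ are finite as assumed in the original statement; everything else is a routine chain of FTC, Cauchy--Schwarz, H\"{o}lder, and Minkowski. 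I expect no genuine obstacle here — the interpolation trick is standard and the constraint $\gamma p<1$ from \Cref{assump:conditionsOnParameters} guarantees the moments are controlled.
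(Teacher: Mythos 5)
Your proposal is correct and follows essentially the same route as the paper's proof: the fundamental theorem of calculus along the segment, Cauchy--Schwarz, the growth bound with the subadditivity $(a+b)^{\gamma}\leq a^{\gamma}+b^{\gamma}$, then H\"{o}lder with exponents $p,q$ and Minkowski. The only cosmetic difference is that you take an arbitrary coupling and infimize at the end, whereas the paper works directly with a coupling attaining $\mathcal{W}_{q}(\mu,\nu)$; both are fine since the moment factor depends only on the marginals.
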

\begin{proof} We have
\begin{align*}
g(v)-g(w)&=\int_0^1\langle w-v,\nabla g((1-t)v+tw)\rangle\text{d}t\\
&\leq\int_0^1\Vert w-v\Vert\Vert\nabla g((1-t)v+tw)\Vert\text{d}t &&\text{(by Cauchy-Schwarz)}\\
&\leq\int_0^1\Vert w-v\Vert( c_1((1-t)\Vert v\Vert+ t\Vert w\Vert)^{\gamma}+c_2)\text{d}t &&\text{(by the assumption on $\nabla g$)}\\
&\leq\Vert w-v\Vert\Big(c_1( \Vert v\Vert+ \Vert w\Vert)^{\gamma}+c_2\Big)\\
&\leq\Vert w-v\Vert( c_1\Vert v\Vert^{\gamma}+c_1 \Vert w\Vert^{\gamma}+c_2). &&\text{(by lemma \ref{lemma:anUsefulIneq})}
\end{align*}
Now let $\mathbf{P}$ be a joint probability distribution of $\mu$ and $\nu$ that achieves $\mathcal{W}_{\lambda}(\mu, \nu)$, that is, $\mathbf{P}=\mathcal{L}((W,V))$ with $\mu=\mathcal{L}(W)$ and $\nu=\mathcal{L}(V)$. We have
\begin{align*}
\int g\text{d}\mu - \int g\text{d}\nu &=\mathbb{E}_{\mathbf{P}} [g(W)-g(V)]\\
&\leq[\mathbb{E}_{\mathbf{P}}(c_1\Vert W\Vert^{\gamma} + c_1\Vert V\Vert^{\gamma} + c_2)^{p}]^{\frac{1}{p}}[\mathbb{E}_{\mathbf{P}}\Vert W-V\Vert^{q}]^{\frac{1}{q}}\\
&\leq \Big(c_1\Big(\mathbb{E}_{\mathbf{P}}\Vert W\Vert^{\gamma p}\Big)^{\frac{1}{p}}+c_1\Big(\mathbb{E}_{\mathbf{P}}\Vert V\Vert^{\gamma p}\Big)^{\frac{1}{p}}+c_2\Big)\mathcal{W}_{q}(\mu, \nu),
\end{align*}
where we have used Holder's inequality and Minkowski's inequality.
\end{proof}

\subsection{Proof of Lemma~\ref{lemma:WassersteinFormula}}

\begin{proof}
We define a real function $F_{\lambda}$ as follows:

\begin{align}\label{eqn:pEuclideDistance}
F_{\lambda}(y)\triangleq\Vert y\Vert^{\lambda}.
\end{align}
It is clear that $F_{\lambda}$ is a $C^1$ function. Let $Y(t)\triangleq X_1(t)-X_2(t)$. By the chain rule,
%By It\^o's formula (see, for example, chapter 2 in \cite{ikeda2014stochastic}):

\begin{align}\label{eqn:ItoFormula}
\nonumber\text{d}F_{\lambda}(Y(t))&=\langle\nabla F_{\lambda}(Y(t)),b_1 (X_1(t-),\alpha)-b_2 (X_2(t-),\alpha)\rangle\text{d}t\\
&=\lambda\,\Vert X_1(t)-X_2(t))\Vert^{\lambda -2} \langle X_1(t)-X_2(t),b_1 (X_1(t-),\alpha)-b_2 (X_2(t-),\alpha)\rangle\text{d}t.
\end{align}
By integrating both sides of \eqref{eqn:ItoFormula} with respect to $t$, we arrive at
\begin{align*}
F_{\lambda}(Y(t))&=F_{\lambda}(Y(0)) + \int_0^t \lambda\,\Vert X_1(t)-X_2(t))\Vert^{\lambda -2} \langle X_1(t)-X_2(t),b_1 (X_1(t-),\alpha)-b_2 (X_2(t-),\alpha)\rangle\text{d}s\\
&=\int_0^t \lambda\,\Vert X_1(t)-X_2(t))\Vert^{\lambda -2} \langle X_1(t)-X_2(t),b_1 (X_1(t-),\alpha)-b_2 (X_2(t-),\alpha)\rangle\text{d}s.
\end{align*}
By definition of Wasserstein distance, we have

\begin{align*}
\mathcal{W}_{\lambda}(\mu_{1t}, \mu_{2t})=\inf\{(\mathbb{E}[F_{\lambda}(Y(t))])^{1/\lambda}\},
\end{align*}
which is the desired result.
\end{proof}

\subsection{Proof of Theorem~\ref{thm:boundOfWassersteinShort}}

In this section, we first precise the statement of Theorem~\ref{thm:boundOfWassersteinShort} and then provide the corresponding proof.

\begin{theorem}\label{thm:boundOfWasserstein}
Let $\mathbb{E}\Vert L^{\alpha}(1)\Vert^{\lambda}\triangleq l_{\alpha,\lambda,d}<\infty$. We also define the following quantities:
\begin{align*}
& P_1(\eta)\triangleq \Big(c\eta\Big(\frac{d}{\beta^{1/\alpha}}\Big)\Big)^{\frac{1}{p_1}} + (c\eta)^{\frac{1}{p_1}} + (2\eta (b+m))^{\frac{(q-1)}{2}} + 2^{\frac{(q-1)}{2}}(\eta B)^{(q-1)} +\Big(\frac{\eta}{\beta}\Big)^{\frac{(q-1)}{\alpha}}l_{\alpha,(q-1)p_1,d}^{\frac{1}{p_1}}\\
 + \eta^{q-1} M^{q-1} \Big((2\eta (b+m))^{\frac{(q-1)\gamma}{2}} + 2^{\frac{(q-1)\gamma}{2}}(\eta B)^{(q-1)\gamma} +\Big(\frac{\eta}{\beta}\Big)^{\frac{(q-1)\gamma}{\alpha}}l_{\alpha,(q-1)p_1\gamma,d}^{\frac{1}{p_1}}\Big), \hspace{-378pt} \\ 
& P_2(\eta)\triangleq M\Big(\Big(c\eta\Big(\frac{d}{\beta^{1/\alpha}}\Big)\Big)^{\frac{1}{q_1}} + (c\eta)^{\frac{1}{q_1}} + (2\eta (b+m))^{\frac{\gamma}{2}} +2^{\frac{\gamma}{2}}(\eta B)^{\gamma} +\Big(\frac{\eta}{\beta}\Big)^{\frac{\gamma}{\alpha}}l_{\alpha,\gamma q_1,d}^{\frac{1}{q_1}}\Big),\\
& Q_1(\eta)\triangleq c^{\frac{1}{p_1}} + (\mathbb{E}\Vert X_2(0)\Vert^{(q-1)p_1})^{\frac{1}{p_1}} + \eta^{q-1}\Big(M^{q-1}(\mathbb{E}\Vert X_2(0)\Vert^{(q-1)p_1\gamma})^{\frac{1}{p_1}} + B^{(q-1)} \Big) + \Big(\frac{\eta}{\beta}\Big)^{\frac{q-1}{\alpha}}l_{\alpha,(q-1)p_1,d}^{\frac{1}{p_1}},\\
& Q_2\triangleq M(\mathbb{E}\Vert X_2(0)\Vert^{\gamma q_1})^{\frac{1}{q_1}} + Mc^{\frac{1}{q_1}}.
\end{align*}
Under additional assumption on the step-size: 
% \begin{align*}
$0<\eta\leq\frac{m}{M^2}$,
% $(q-1)p_1<1$ and $\gamma q_1<1$,
% \end{align*}
we have
\begin{align*}
\mathcal{W}_{q}^{q}(\mu_{1t}, \mu_{2t})\leq q\eta\Big(k^2P_1(\eta)P_2(\eta) + k^{1+1/p_1}P_1(\eta)Q_2 + k^{1+1/q_1}P_2(\eta)Q_1(\eta) + kQ_1(\eta)Q_2\Big).
% q\eta\Big(\frac{k(k+1)}{2}[P_1(\eta)P_2(\eta) + P_1(\eta)Q_2 + P_2(\eta)Q_1(\eta)] + kQ_1(\eta)Q_2\Big).
\end{align*}
% Note that, on the right hand side of this inequality, $P_1(\eta)P_2(\eta) + P_1(\eta)Q_2(\eta) + P_2(\eta)Q_1(\eta) = \mathcal{O}(\eta^{\frac{(q-1)\gamma}{2}})$ and $Q_1(\eta)Q_2(\eta)=\mathcal{O}(1)$.
\end{theorem}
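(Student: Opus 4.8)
The plan is to apply Lemma~\ref{lemma:WassersteinFormula} with $\lambda = q$ to the pair $(X_1, X_2)$, which reduces bounding $\mathcal{W}_q^q(\mu_{1t},\mu_{2t})$ to controlling the time-integral
\begin{align*}
\mathbb{E}\Big[\int_0^t q\,\Vert \Delta X_{12}(s)\Vert^{q-2} \langle \Delta X_{12}(s), \Delta b_{12}(s-)\rangle\,\mathrm{d}s\Big],
\end{align*}
where $\Delta b_{12}(s-) = b_1(X_1(s-),\alpha) - b_2(X_2(s),\alpha) = -c_\alpha(\nabla f(X_1(s)) - \nabla f(X_2(\lfloor s/\eta\rfloor \eta)))$. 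The key observation is that $b_2$ freezes the gradient at the last grid point $j\eta$, so I would split the discrepancy $\nabla f(X_1(s)) - \nabla f(X_2(j\eta))$ into $(\nabla f(X_1(s)) - \nabla f(X_2(s)))$ plus $(\nabla f(X_2(s)) - \nabla f(X_2(j\eta)))$. The first piece couples the two processes and, via H\"older continuity (\Cref{assump:HolderContinuity}), contributes a term governed by $\Vert \Delta X_{12}(s)\Vert^\gamma$; the second piece is the genuine discretization error over one step, controlled by how far $X_2$ travels in time $\eta$.

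First I would use the Cauchy--Schwarz inequality on the inner product and then repeatedly invoke H\"older's inequality with the conjugate pairs $(p,q)$ and $(p_1,q_1)$ from \Cref{assump:conditionsOnParameters} to separate the factors $\Vert \Delta X_{12}(s)\Vert^{q-1}$ (from $\Vert \Delta X_{12}\Vert^{q-2}$ times one factor of $\Delta X_{12}$) and $\Vert \Delta b_{12}(s-)\Vert$. The technical role of \Cref{assump:conditionsOnParameters} is precisely to make these exponents integrable: the constraints $q<\alpha$, $\gamma p<1$, $\gamma q_1<1$, and $(q-1)p_1<1$ guarantee that all the fractional moments appearing after the H\"older splits are finite, using that $\mathbb{E}\Vert L^\alpha(1)\Vert^\lambda<\infty$ for $\lambda<\alpha$. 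The quantities $P_1(\eta), P_2(\eta), Q_1(\eta), Q_2$ are exactly the collections of these moment bounds, and I would establish one-step moment estimates of the form $\mathbb{E}\Vert X_2(s) - X_2(j\eta)\Vert^{\text{(power)}} \lesssim$ (drift contribution) $+$ (L\'evy increment contribution), where the drift contribution uses the dissipativity bound \Cref{assump:dissipative} and the bounded-gradient-at-zero assumption \Cref{assump:boundedGradAtZero} to control $\Vert \nabla f \Vert$ via $\Vert \nabla f(x)\Vert \le \Vert\nabla f(0)\Vert + $ H\"older term.

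Next I would accumulate the per-step estimates over the $k$ grid intervals. Because the analysis of the first (coupling) piece introduces a factor of $\Vert \Delta X_{12}(s)\Vert^{\gamma}$, the resulting bound on $\mathbb{E}\Vert \Delta X_{12}(t)\Vert^q$ is not closed directly but satisfies a recursive (Gr\"onwall-type) inequality in $k$. The step-size restriction $0<\eta\le m/M^2$ is what I expect makes this recursion contractive enough that, rather than the naive exponential-in-$t$ growth typical of weak-error estimates (cf.\ \cite{mikulevivcius2011rate}), the accumulated error grows only \emph{polynomially} in $k$ --- this polynomial growth is the central claim $\mathcal{W}_q^q(\mu_{1t},\mu_{2t})\le Cq\,\mathrm{Poly}(k,\eta,\beta,d)$. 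Collecting the leading powers, the four cross-terms $k^2 P_1 P_2$, $k^{1+1/p_1}P_1 Q_2$, $k^{1+1/q_1}P_2 Q_1$, and $kQ_1Q_2$ arise from pairing the ``propagated'' versus ``freshly generated'' error factors at the two H\"older levels, each carrying a matching power of $k$ determined by the conjugate exponent used.

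The main obstacle I anticipate is establishing the polynomial (rather than exponential) dependence on $k$: naively summing the one-step errors with the H\"older constant $M$ would compound multiplicatively and yield $e^{CMt}$. Taming this requires carefully exploiting the sign of $\langle \Delta X_{12}(s), \nabla f(X_1(s))-\nabla f(X_2(s))\rangle$ together with dissipativity so that the self-coupling term is \emph{dissipative} (pulling $\Vert\Delta X_{12}\Vert$ back toward zero) rather than expansive, and this is exactly where $\eta\le m/M^2$ enters to ensure the discretization does not overshoot the contraction. A secondary delicate point is handling the $\Vert\Delta X_{12}\Vert^{q-2}$ weight when $q<2$, since the exponent is negative and the integrand must be shown integrable near $\Delta X_{12}=0$; I would absorb this through the H\"older pairing so that only the combined power $q-1>0$ ever appears, avoiding any singularity.
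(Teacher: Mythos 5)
Your overall skeleton (Lemma~\ref{lemma:WassersteinFormula} with $\lambda=q$, Cauchy--Schwarz to reduce $\Vert\Delta X_{12}(s)\Vert^{q-2}\langle\Delta X_{12}(s),\Delta b_{12}(s-)\rangle$ to $\Vert\Delta X_{12}(s)\Vert^{q-1}\Vert\Delta b_{12}(s-)\Vert$, H\"older with the pair $(p_1,q_1)$, accumulation over the $k$ grid intervals) matches the paper, and your handling of the negative exponent $q-2$ is exactly right. But the heart of your argument --- a Gr\"onwall-type recursion on $\mathbb{E}\Vert\Delta X_{12}\Vert^q$ made contractive by dissipativity and $\eta\le m/M^2$ --- has a genuine gap. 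The $(m,b,\gamma)$-dissipativity of \Cref{assump:dissipative} is a pointwise condition, $c_\alpha\langle x,\nabla f(x)\rangle\ge m\Vert x\Vert^{1+\gamma}-b$; it gives no control whatsoever of the sign of $\langle\Delta X_{12}(s),\nabla f(X_1(s))-\nabla f(X_2(s))\rangle$. A favorable sign there would require monotonicity (co-coercivity) of $\nabla f$, i.e.\ essentially convexity, which is precisely what this paper does not assume. Under \Cref{assump:HolderContinuity} alone, the self-coupling term can be \emph{expansive}, of size $+Mc_\alpha^{-1}\Vert\Delta X_{12}(s)\Vert^{q-1+\gamma}$, so your recursion is not contractive as claimed. (One could still extract polynomial growth from a sublinear Gr\"onwall comparison, since $q-1+\gamma<q$, but you would have to set that up explicitly and you do not.) Relatedly, the step-size condition $\eta\le m/M^2$ does not enter where you place it: in the paper it is used only inside the one-step moment estimate for $X_2$ (proof of Lemma~\ref{lemma:expecataionBound}), to guarantee $2\eta m(\Vert X_2(j\eta)\Vert^{1+\gamma}+1)\ge 2\eta^2M^2\Vert X_2(j\eta)\Vert^{2\gamma}$, so that a single Euler step does not inflate the fractional moments; it plays no role in contracting any coupling.

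The paper's actual route to polynomial-in-$k$ growth is cruder and entirely avoids estimating the difference process. Because \Cref{assump:conditionsOnParameters} forces $(q-1)p_1<1$ and $\gamma q_1<1$, the subadditivity $(a+b)^{\theta}\le a^{\theta}+b^{\theta}$ for $\theta\le1$ (Lemma~\ref{lemma:anUsefulIneq}) is applied to \emph{both} H\"older factors: $\mathbb{E}\Vert X_1(s)-X_2(s)\Vert^{(q-1)p_1}$ is bounded by the sum of the individual fractional moments of $X_1(s)$ and $X_2(s)$, and the gradient term is bounded directly by $\Vert\nabla f(X_1(s))-\nabla f(X_2(j\eta))\Vert\le Mc_\alpha^{-1}(\Vert X_1(s)\Vert^{\gamma}+\Vert X_2(j\eta)\Vert^{\gamma})$ --- no splitting into a coupling piece plus a one-step discretization piece, which is another place your decomposition departs from the paper. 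Each single-process fractional moment grows only linearly in time (Lemmas~\ref{lemma:expectationBoundOfX1} and~\ref{lemma:expecataionBound}), yielding factors of the form $Q_1(\eta)+(j+1)^{1/p_1}P_1(\eta)$ and $Q_2+(j+1)^{1/q_1}P_2(\eta)$, and summing their products over $j=0,\dots,k-1$ produces exactly the four terms $k^2P_1P_2$, $k^{1+1/p_1}P_1Q_2$, $k^{1+1/q_1}P_2Q_1$, and $kQ_1Q_2$. In short, the polynomial dependence on $k$ comes from the strictly sub-linear moment exponents imposed by \Cref{assump:conditionsOnParameters}, not from any contraction; as written, your proof would not close without either proving a monotonicity property that is unavailable in this non-convex setting or replacing the contraction step by a sublinear Gr\"onwall argument or by the paper's subadditivity device.
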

\begin{proof} 
% \vspace{-15pt}
From Lemma~\ref{lemma:WassersteinFormula}, we have
\begin{align*}
\mathcal{W}_{q}^{q}(\mu_{1t}, \mu_{2t})=&\mathbb{E}\Big[\int_0^t q\,\Vert X_1(s)-X_2(s))\Vert^{q -2} \langle X_1(s)-X_2(s),b_1 (X_1(s-),\alpha)-b_2 (X_2(s-),\alpha)\rangle\text{d}s\Big]\\
=&\sum_{j=0}^{k-1}\mathbb{E}\Big[\int_{j\eta}^{(j+1)\eta}q\,\Vert X_1(s)-X_2(s))\Vert^{q -2} \langle X_1(s)-X_2(s),b_1 (X_1(s-),\alpha)-b_2 (X_2(s-),\alpha)\rangle\text{d}s\Big]\\
\leq&\sum_{j=0}^{k-1}\mathbb{E}\Big[\int_{j\eta}^{(j+1)\eta}q\,\Vert X_1(s)-X_2(s)\Vert^{q-1}c_\alpha\Vert\nabla f(X_1(s))-\nabla f(X_2(j\eta))\Vert\text{d}s\Big]\\
=&q\,\sum_{j=0}^{k-1}\int_{j\eta}^{(j+1)\eta}\mathbb{E}\Big[\Vert X_1(s)-X_2(s)\Vert^{q-1}c_\alpha\Vert\nabla f(X_1(s))-\nabla f(X_2(j\eta))\Vert\Big]\text{d}s\\
\leq& q\,\sum_{j=0}^{k-1}\int_{j\eta}^{(j+1)\eta}\Big[\mathbb{E}\Vert X_1(s)-X_2(s)\Vert^{(q-1)p_1}\Big]^{\frac{1}{p_1}}\Big[\mathbb{E}\Vert c_\alpha(\nabla f(X_1(s))-\nabla f(X_2(j\eta)))\Vert^{q_1}\Big]^{\frac{1}{q_1}}\text{d}s,
\end{align*}
where we have used Cauchy-Schwarz inequality in the third line and Holder's inequality in the last line.

Since $(q-1)p_1< 1$ by Assumption~\Cref{assump:conditionsOnParameters}, using Lemma \ref{lemma:anUsefulIneq} twice, we have:
\begin{align*}
\Big(\mathbb{E}\Vert X_1(s)-X_2(s)\Vert^{(q-1)p_1}\Big)^{\frac{1}{p_1}}\leq&\Big(\mathbb{E}\Vert X_1(s)\Vert^{(q-1)p_1}+\mathbb{E}\Vert X_2(s)\Vert^{(q-1)p_1}\Big)^{\frac{1}{p_1}}\\
\leq&\Big[\mathbb{E}\Big(\Vert X_1(s)\Vert^{(q-1)p_1}\Big)\Big]^{\frac{1}{p_1}}+\Big[\mathbb{E}\Big(\Vert X_2(s)\Vert^{(q-1)p_1}\Big)\Big]^{\frac{1}{p_1}}
\end{align*}
Then, by applying Lemma~\ref{lemma:expectationBoundOfX1} and Lemma~\ref{lemma:expecataionBound} for $s\in[j\eta,(j+1)\eta)$, we obtain:
\begin{align*}
\Big(\mathbb{E}\Vert X_1(s)- & X_2(s)\Vert^{(q-1)p_1}\Big)^{\frac{1}{p_1}}\\
\leq&  \Big(c\Big(s\Big(\frac{d}{\beta^{1/\alpha}}+1\Big)+1\Big)\Big)^{q-1} + \Big[\mathbb{E}\Vert X_2(0)\Vert^{(q-1)p_1} + j\Big((2\eta (b+m))^{\frac{(q-1)p_1}{2}} +2^{\frac{(q-1)p_1}{2}}(\eta B)^{(q-1)p_1}\\
 &+\Big(\frac{\eta}{\beta}\Big)^{\frac{(q-1)p_1}{\alpha}}l_{\alpha,(q-1)p_1,d}\Big) + (s-j\eta)^{(q-1)p_1}\Big(M^{(q-1)p_1} \Big(\mathbb{E}\Vert X_2(0)\Vert^{(q-1)p_1\gamma} + j\Big((2\eta (b+m))^{\frac{(q-1)p_1\gamma}{2}}\\
 &+ 2^{\frac{(q-1)p_1\gamma}{2}}(\eta B)^{(q-1)p_1\gamma} +\Big(\frac{\eta}{\beta}\Big)^{\frac{(q-1)p_1\gamma}{\alpha}}l_{\alpha,(q-1)p_1\gamma,d}\Big)\Big) + B^{(q-1)p_1}\Big) + \Big(\frac{s-j\eta}{\beta}\Big)^{\frac{(q-1)p_1}{\alpha}}l_{\alpha,(q-1)p_1,d}\Big]^{\frac{1}{p_1}}.
\end{align*}
Next, using Lemma~\ref{lemma:anUsefulIneq}, the inequalities $j<j+1$ and $s-j\eta\leq\eta$ for $s\in[j\eta,(j+1)\eta)$, we get
\begin{align*}
\Big(\mathbb{E}\Vert X_1(s)-X_2(s)\Vert^{(q-1)p_1}&\Big)^{\frac{1}{p_1}}\\
\leq&  \Big(c\Big(s\Big(\frac{d}{\beta^{1/\alpha}}+1\Big)+1\Big)\Big)^{q-1} + (\mathbb{E}\Vert X_2(0)\Vert^{(q-1)p_1})^{\frac{1}{p_1}} + (j+1)^{\frac{1}{p_1}}\Big((2\eta (b+m))^{\frac{(q-1)}{2}}\\
& +2^{\frac{(q-1)}{2}}(\eta B)^{(q-1)} +\Big(\frac{\eta}{\beta}\Big)^{\frac{(q-1)}{\alpha}}l_{\alpha,(q-1)p_1,d}^{\frac{1}{p_1}}\Big) + \eta^{q-1}\Big(M^{q-1} \Big((\mathbb{E}\Vert X_2(0)\Vert^{(q-1)p_1\gamma})^{\frac{1}{p_1}}\\
& + (j+1)^{\frac{1}{p_1}}\Big((2\eta (b+m))^{\frac{(q-1)\gamma}{2}} +2^{\frac{(q-1)\gamma}{2}}(\eta B)^{(q-1)\gamma} +\Big(\frac{\eta}{\beta}\Big)^{\frac{(q-1)\gamma}{\alpha}}l_{\alpha,(q-1)p_1\gamma,d}^{\frac{1}{p_1}}\Big)\Big)\\
& + B^{(q-1)}\Big) + \Big(\frac{\eta}{\beta}\Big)^{\frac{q-1}{\alpha}}l_{\alpha,(q-1)p_1,d}^{\frac{1}{p_1}}.
\end{align*}
We note that $s<(j+1)\eta$ and $q-1<\frac{1}{p_1}$ (from the assumptions). Hence,
\begin{align*}
\Big(c\Big(s\Big(\frac{d}{\beta^{1/\alpha}}+1\Big)+1\Big)\Big)^{q-1}\leq&\Big(c\Big((j+1)\eta\Big(\frac{d}{\beta^{1/\alpha}}+1\Big) + 1\Big)\Big)^{\frac{1}{p_1}}\\
\leq&(j+1)^{\frac{1}{p_1}}\Big(c\eta\Big(\frac{d}{\beta^{1/\alpha}}+1\Big)\Big)^{\frac{1}{p_1}} + c^{\frac{1}{p_1}},
\end{align*}
where the last inequality is an application of Lemma~\ref{lemma:anUsefulIneq}. By replacing this inequality into the previous one and rearranging the terms, we have
\begin{align*}
\Big(\mathbb{E}\Vert X_1(s)&-X_2(s)\Vert^{(q-1)p_1} \Big)^{\frac{1}{p_1}}\\
\leq& c^{\frac{1}{p_1}} + (\mathbb{E}\Vert X_2(0)\Vert^{(q-1)p_1})^{\frac{1}{p_1}} + \eta^{q-1}\Big(M^{q-1}(\mathbb{E}\Vert X_2(0)\Vert^{(q-1)p_1\gamma})^{\frac{1}{p_1}} + B^{(q-1)} \Big) + \Big(\frac{\eta}{\beta}\Big)^{\frac{q-1}{\alpha}}l_{\alpha,(q-1)p_1,d}^{\frac{1}{p_1}}\\
& + (j+1)^{\frac{1}{p_1}}\Big(\Big(c\eta\Big(\frac{d}{\beta^{1/\alpha}}+1\Big)\Big)^{\frac{1}{p_1}} + (2\eta (b+m))^{\frac{(q-1)}{2}} + 2^{\frac{(q-1)}{2}}(\eta B)^{(q-1)} +\Big(\frac{\eta}{\beta}\Big)^{\frac{(q-1)}{\alpha}}l_{\alpha,(q-1)p_1,d}^{\frac{1}{p_1}}\\
&+ \eta^{q-1} M^{q-1} \Big((2\eta (b+m))^{\frac{(q-1)\gamma}{2}} + 2^{\frac{(q-1)\gamma}{2}}(\eta B)^{(q-1)\gamma} +\Big(\frac{\eta}{\beta}\Big)^{\frac{(q-1)\gamma}{\alpha}}l_{\alpha,(q-1)p_1\gamma,d}^{\frac{1}{p_1}}\Big)\Big)\\
\leq& c^{\frac{1}{p_1}} + (\mathbb{E}\Vert X_2(0)\Vert^{(q-1)p_1})^{\frac{1}{p_1}} + \eta^{q-1}\Big(M^{q-1}(\mathbb{E}\Vert X_2(0)\Vert^{(q-1)p_1\gamma})^{\frac{1}{p_1}} + B^{(q-1)} \Big) + \Big(\frac{\eta}{\beta}\Big)^{\frac{q-1}{\alpha}}l_{\alpha,(q-1)p_1,d}^{\frac{1}{p_1}}\\
& + (j+1)^{\frac{1}{p_1}}\Big(\Big(c\eta\Big(\frac{d}{\beta^{1/\alpha}}\Big)\Big)^{\frac{1}{p_1}} + (c\eta)^{\frac{1}{p_1}} + (2\eta (b+m))^{\frac{(q-1)}{2}} + 2^{\frac{(q-1)}{2}}(\eta B)^{(q-1)} +\Big(\frac{\eta}{\beta}\Big)^{\frac{(q-1)}{\alpha}}l_{\alpha,(q-1)p_1,d}^{\frac{1}{p_1}}\\
&+ \eta^{q-1} M^{q-1} \Big((2\eta (b+m))^{\frac{(q-1)\gamma}{2}} + 2^{\frac{(q-1)\gamma}{2}}(\eta B)^{(q-1)\gamma} +\Big(\frac{\eta}{\beta}\Big)^{\frac{(q-1)\gamma}{\alpha}}l_{\alpha,(q-1)p_1\gamma,d}^{\frac{1}{p_1}}\Big)\Big)\\
=&Q_1(\eta) + (j+1)^{\frac{1}{p_1}}P_1(\eta),
\end{align*}
Here, we have used \Cref{lemma:anUsefulIneq} in the last inequality. Now, consider the following quantity
\begin{align*}
\Big[\mathbb{E}\Vert c_\alpha(\nabla f(X_1(s))-\nabla f(X_2(j\eta)))\Vert^{q_1}\Big]^{\frac{1}{ q_1}}\leq&\Big[\mathbb{E}\Big(M\Vert X_1(s)- X_2(j\eta)\Vert^{\gamma}\Big)^{q_1}\Big]^{\frac{1}{q_1}}\\
\leq&\Big[\mathbb{E}\Big(M\Vert X_1(s)\Vert^{\gamma}+M\Vert X_2(j\eta)\Vert^{\gamma}\Big)^{q_1}\Big]^{\frac{1}{q_1}}\\
\leq&\Big[\mathbb{E}\Big(M^{q_1}\Vert X_1(s)\Vert^{\gamma q_1}\Big)\Big]^{\frac{1}{q_1}}+\Big[\mathbb{E}\Big(M^{q_1}\Vert X_2(j\eta)\Vert^{\gamma q_1}\Big)\Big]^{\frac{1}{q_1}},
\end{align*}
where we have used Assumption~\Cref{assump:HolderContinuity}, Lemma~\ref{lemma:anUsefulIneq} and Minkowski's inequality. By Lemma~\ref{lemma:expectationBoundOfX1} and Lemma~\ref{lemma:expecataionBound}, we have
\begin{align*}
\Big[\mathbb{E}\Vert c_\alpha\nabla f(X_1(s))-c_\alpha\nabla f(X_2(j\eta))\Vert^{q_1}\Big]^{\frac{1}{ q_1}}\leq& M\Big(c\Big(s\Big(\frac{d}{\beta^{1/\alpha}}+1\Big)+1\Big)\Big)^{\gamma} + \Big[M^{q_1}(\mathbb{E}\Vert X_2(0)\Vert^{\gamma q_1})\\
& + M^{q_1}j\Big((2\eta (b+m))^{\frac{\gamma q_1}{2}} +2^{\frac{\gamma q_1}{2}}(\eta B)^{\gamma q_1} +\Big(\frac{\eta}{\beta}\Big)^{\frac{\gamma q_1}{\alpha}}l_{\alpha,\gamma q_1,d}\Big)\Big]^{\frac{1}{q_1}}.
\end{align*}
By using Lemma~\ref{lemma:anUsefulIneq} and the inequality $j<j+1$, we have
\begin{align*}
\Big[\mathbb{E}\Vert c_\alpha\nabla f(X_1(s))-c_\alpha\nabla f(X_2(j\eta))\Vert^{q_1}\Big]^{\frac{1}{ q_1}}\leq& M\Big(c\Big(s\Big(\frac{d}{\beta^{1/\alpha}}+1\Big)+1\Big)\Big)^{\gamma} + M(\mathbb{E}\Vert X_2(0)\Vert^{\gamma q_1})^{\frac{1}{q_1}}\\
& + M(j+1)^{\frac{1}{q_1}}\Big((2\eta (b+m))^{\frac{\gamma }{2}} +2^{\frac{\gamma }{2}}(\eta B)^{\gamma } +\Big(\frac{\eta}{\beta}\Big)^{\frac{\gamma }{\alpha}}l_{\alpha,\gamma q_1,d}^{\frac{1}{q_1}}\Big).
\end{align*}
We note that $s<(j+1)\eta$ and $\gamma<\frac{1}{q_1}$ (from the assumptions). Hence,
\begin{align*}
\Big(c\Big(s\Big(\frac{d}{\beta^{1/\alpha}}+1\Big)+1\Big)\Big)^{\gamma}\leq&\Big(c\Big((j+1)\eta\Big(\frac{d}{\beta^{1/\alpha}}+1\Big) + 1\Big)\Big)^{\frac{1}{q_1}}\\
\leq&(j+1)^{\frac{1}{q_1}}\Big(c\eta\Big(\frac{d}{\beta^{1/\alpha}}+1\Big)\Big)^{\frac{1}{q_1}} + c^{\frac{1}{q_1}},
\end{align*}
where the last inequality is an application of Lemma~\ref{lemma:anUsefulIneq}. By replacing this inequality into the previous one and rearranging the terms, we have

\begin{align*}
\Big[\mathbb{E}\Vert c_\alpha\nabla f(X_1(s))-c_\alpha\nabla f(X_2(j\eta))\Vert^{q_1}\Big]^{\frac{1}{ q_1}}
\leq& M(\mathbb{E}\Vert X_2(0)\Vert^{\gamma q_1})^{\frac{1}{q_1}} + Mc^{\frac{1}{q_1}} + M(j+1)^{\frac{1}{q_1}}\Big(\Big(c\eta\Big(\frac{d}{\beta^{1/\alpha}}+1\Big)\Big)^{\frac{1}{q_1}}\\
& + (2\eta (b+m))^{\frac{\gamma}{2}} +2^{\frac{\gamma}{2}}(\eta B)^{\gamma} +\Big(\frac{\eta}{\beta}\Big)^{\frac{\gamma}{\alpha}}l_{\alpha,\gamma q_1,d}^{\frac{1}{q_1}}\Big)\\
\leq& M(\mathbb{E}\Vert X_2(0)\Vert^{\gamma q_1})^{\frac{1}{q_1}} + Mc^{\frac{1}{q_1}} + M(j+1)^{\frac{1}{q_1}}\Big(\Big(c\eta\Big(\frac{d}{\beta^{1/\alpha}}\Big)\Big)^{\frac{1}{q_1}}\\
& + (c\eta)^{\frac{1}{q_1}} + (2\eta (b+m))^{\frac{\gamma}{2}} +2^{\frac{\gamma}{2}}(\eta B)^{\gamma} +\Big(\frac{\eta}{\beta}\Big)^{\frac{\gamma}{\alpha}}l_{\alpha,\gamma q_1,d}^{\frac{1}{q_1}}\Big)\\
=&Q_2 + (j+1)^{\frac{1}{q_1}}P_2(\eta).
\end{align*}
Here, we have used \Cref{lemma:anUsefulIneq} in the last inequality. By combining the above inequalities, we get
\begin{align*}
\mathbb{E}\Big[\int_0^t q\,\Vert X_1(s)-& X_2(s))\Vert^{q -2} \langle X_1(s)-X_2(s),b_1 (X_1(s-),\alpha)-b_2 (X_2(s-),\alpha)\rangle\text{d}s\Big]  \\
&\leq\sum_{j=0}^{k-1}q\eta\Big((j+1)P_1(\eta)P_2(\eta) + (j+1)^{\frac{1}{p_1}}P_1(\eta)Q_2+(j+1)^{\frac{1}{q_1}}P_2(\eta)Q_1(\eta)+Q_1(\eta)Q_2\Big)\\
% &\leq\sum_{j=0}^{k-1}q\eta\Big(P_1(\eta)P_2(\eta)(j+1) + P_1(\eta)Q_2(\eta)(j+1) + P_2(\eta)Q_1(\eta)(j+1) + Q_1(\eta)Q_2(\eta)\Big)\\
& \leq q\eta\Big(k^2P_1(\eta)P_2(\eta) + k^{1+1/p_1}P_1(\eta)Q_2 + k^{1+1/q_1}P_2(\eta)Q_1(\eta) + kQ_1(\eta)Q_2\Big).
\end{align*}
The final conclusion follows from this inequality.
\end{proof}

\subsubsection{Proof of Corollary~\ref{cor:BoundWasserstein}}

\begin{proof}
% By direct calculations, we have that the minimum power of $\eta$ in $P_1(\eta)P_2(\eta)$, $P_1(\eta)Q_2$ and $P_2(\eta)Q_1(\eta)$ is $p_{1,min}$, $p_{2,min}$ and $p_{3,min}$, respectively, while the minimum power of $\eta$ in $Q_1(\eta)Q_2$ is $0$. The conclusion follows.
In order to get the results from the bound obtained by Theorem~\ref{thm:boundOfWasserstein}, we take the max power of $k$ and the min power of $\eta$ among the terms containing $k$ and $\eta$ but not containing $\beta$. For the terms containing $\beta$, we take the max power of $k$, min power of $\eta$, min power of $1/\beta$ and max power of $d$. We get
\begin{align*}
\mathcal{W}^q_{q}(\mu_{1t}, \mu_{2t})\leq &C(k^2\eta + k^2\eta^{1+\min\{\gamma,q-1\}/\alpha}\beta^{-(q-1)\gamma/\alpha}d).
\end{align*}
Since $\gamma<1/p=(q-1)/q<q-1$, we finally obtain
\begin{align*}
\mathcal{W}^q_{q}(\mu_{1t}, \mu_{2t})\leq &C(k^2\eta + k^2\eta^{1+\gamma/\alpha}\beta^{-(q-1)\gamma/\alpha}d).
\end{align*}
\end{proof}

\subsubsection{Proof of Corollary~\ref{cor:distanceFromX1ToX2_2}}

\begin{proof}
% By the above Remark, we have
% \begin{align*}
% p_{1,min}=\frac{q-1}{2}+\frac{\gamma}{2}, && p_{2,min}=\frac{q-1}{2}, && p_{3,min}=\frac{\gamma}{2}.
% \end{align*}
The proof starts from the bound established in \Cref{cor:distanceFromX1toX2} then, follows the same lines of the proof of Corollary~\ref{cor:BoundWasserstein}. 
% In order to get the results from the bound obtained by Corollary~\ref{cor:distanceFromX1toX2}, we take the max power of $k$ and the min power of $\eta$ among the terms containing $k$ and $\eta$ but not containing $\beta$. For the terms containing $\beta$, we take the max power of $k$, min power of $\eta$ and min power of $1/\beta$.
\end{proof}

\subsection{Proof of Theorem~\ref{thm:ultimateTheorem}}

\begin{proof}
We have the decomposition:
\begin{align*}
\mathbb{E}[f(W^k)] -& f^* \\
=& \mathbb{E}[f(X_2(k\eta))] - f^* \\
=& (\mathbb{E}[f(X_2(k\eta))] - \mathbb{E}[f(X_1(k\eta))]) + (\mathbb{E}[f(X_1(k\eta))] - \mathbb{E}[f(X_3(k\eta))]) + (\mathbb{E}[f(X_3(k\eta))] - \mathbb{E}[f(\hat{W}))]) \\
&+ (\mathbb{E}[f(\hat{W}))] - f^*).
\end{align*}
By Corollary~\ref{cor:distanceFromX1ToX2_2}, Corollary~\ref{cor:distanceFromX1toX3}, Lemma~\ref{lemma:distanceFromX3ToInvariant2} and Lemma~\ref{lem:distanceInvariantToMinimumInExpectation}, there exists a constant $C'$ independent of $k$, $\eta$ and $\beta$ such that
\begin{align*}
\mathbb{E}[f(W^k)] - f^*\leq &C'\Big(k^{1+\frac{1}{q}}\eta^{\frac{1}{q}}  +  k^{1+\frac{1}{q}}\eta^{\frac{1}{q}+\frac{\gamma}{\alpha q}}\beta^{-\frac{(q-1)\gamma}{\alpha q}}d + k^{\gamma+\frac{\gamma+q}{q}}\eta^{\gamma+\frac{1}{q}}\beta^{-\frac{\gamma}{\alpha}}d + k^{\gamma+\frac{\gamma + q}{q}}\eta^{\frac{1}{q}}\\
& + \beta\frac{b+d/\beta}{m}\exp(- \lambda_*\beta^{-1}t) \Big) + \frac{\beta^{-\gamma-1} Mc_\alpha^{-1}}{1+\gamma} + \beta^{-1}\log\Big(\frac{(2e(b+d/\beta))^{d/2}\Gamma(d/2+1)\beta^d}{(dm)^{d/2}}\Big).
% =& C\Big(k^{1+\frac{1}{q}}\eta^{\frac{(1+\gamma)(1+q)}{2q}} + k^{1+\frac{1}{qp_1}}\eta^{\frac{1+q+\gamma q}{2q}} + k^{1+\frac{1}{qq_1}}\eta^{\frac{2+\gamma+\gamma q}{2q}} + k\eta^{\frac{2+\gamma q}{2q}} + k^{\frac{2}{q}}\eta^{\frac{1+q+\gamma}{2q}} + k^{\frac{1}{q}+\frac{1}{qp_1}}\eta^{\frac{1+q}{2q}}\\
% & + k^{\frac{1}{q}+\frac{1}{qq_1}}\eta^{\frac{2+\gamma}{2q}} + (k\eta)^{\frac{1}{q}} + (k\eta)^{\gamma+\frac{q+\gamma}{q}} + e^{-k\eta}\Big)  +  \frac{\beta^{-\gamma-1} Mc_\alpha^{-1}}{1+\gamma} + \beta^{-1}\log\Big(\frac{(2ec_r)^{d/2}\Gamma(d/2+1)\beta^d}{d^{d/2}}\Big).
\end{align*}
Here, we note that $k\eta=t$. then by taking the largest power of $k$, smallest powers of $\eta$ and $\beta^{-1}$ among the terms containing all of three parameters $k$, $\eta$ and $\beta$, there exist a constant $C$ satisfying the following inequality:
\begin{align*}
\mathbb{E}[f(W^k)] - f^*\leq &C\Big(k^{1+\max\{\frac{1}{q},\gamma+\frac{\gamma}{q}\}}\eta^{\frac{1}{q}}  +  k^{1+\max\{\frac{1}{q},\gamma+\frac{\gamma}{q}\}}\eta^{\frac{1}{q}+\frac{\gamma}{\alpha q}}\beta^{-\frac{(q-1)\gamma}{\alpha q}}d + \beta\frac{b+d/\beta}{m}\exp(- \lambda_*\beta^{-1}k\eta) \Big)\\
& + \frac{\beta^{-\gamma-1} Mc_\alpha^{-1}}{1+\gamma} + \beta^{-1}\log\Big(\frac{(2e(b+d/\beta))^{d/2}\Gamma(d/2+1)\beta^d}{(dm)^{d/2}}\Big).
\end{align*}
\end{proof}

\subsection{Proof of Theorem~\ref{thm:WassersteinOfX1X3}}
In this section, we precise the statement of Theorem~\ref{thm:WassersteinOfX1X3} and provide the full proof. 
\begin{theorem}
% \label{thm:WassersteinOfX1X3}
We have the following estimate:
\begin{align*}
\mathcal{W}_{q}^{q}(\mu_{1t}, \mu_{3t})\leq& qt\Big(M(c^{q-1}+c_b^{q-1})(c^{\gamma}+c_b^{\gamma})\Big(t\Big(\frac{d}{\beta^{1/\alpha}}+1\Big)+1\Big)^{q-1+\gamma} + L(c^{q-1}+c_b^{q-1})\Big(t\Big(\frac{d}{\beta^{1/\alpha}}+1\Big)+1\Big)^{q-1}\Big),
\end{align*}
where $c$ and $c_b$ are constants defined in Lemma~\ref{lemma:expectationBoundOfX1} and Lemma~\ref{lemma:expectationBoundOfX3}.
\end{theorem}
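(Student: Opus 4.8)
The plan is to use the \emph{synchronous coupling} of $X_1$ and $X_3$: drive both SDEs by the same $\alpha$-stable L\'evy motion and start them from the same point. Then the driving noise cancels in the difference $Y(t)\triangleq X_1(t)-X_3(t)$, so $Y$ is absolutely continuous with $\mathrm{d}Y(t)=\big(b_1(X_1(t-),\alpha)-b(X_3(t-),\alpha)\big)\mathrm{d}t$ and no jump or martingale part. This is exactly the situation covered by Lemma~\ref{lemma:WassersteinFormula}; plugging this particular coupling in as an upper bound for the infimum gives
\[
\mathcal{W}_q^q(\mu_{1t},\mu_{3t})\leq q\int_0^t \mathbb{E}\Big[\|X_1(s)-X_3(s)\|^{q-2}\langle X_1(s)-X_3(s),\,b_1(X_1(s-),\alpha)-b(X_3(s-),\alpha)\rangle\Big]\mathrm{d}s.
\]

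First I would bound the integrand pointwise. By Cauchy--Schwarz the inner product is at most $\|X_1(s)-X_3(s)\|^{q-1}\,\|b_1(X_1(s-),\alpha)-b(X_3(s-),\alpha)\|$, and since jumps occur only on a countable (measure-zero) set of times I may replace the left limits $X_i(s-)$ by $X_i(s)$ under $\mathrm{d}s$. Adding and subtracting $c_\alpha\nabla f(X_3(s))$ splits the drift gap into $\|c_\alpha(\nabla f(X_3(s))-\nabla f(X_1(s)))\|$, bounded via the fractional H\"older continuity (\Cref{assump:HolderContinuity}) by $M\|X_1(s)-X_3(s)\|^{\gamma}$, and $\|c_\alpha\nabla f(X_3(s))+b(X_3(s),\alpha)\|$, bounded via the uniform approximation assumption (\Cref{assump:uniformlyBounded}) by $L$. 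Hence the integrand is at most $M\|X_1(s)-X_3(s)\|^{q-1+\gamma}+L\|X_1(s)-X_3(s)\|^{q-1}$.

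The core step is estimating $\mathbb{E}\|X_1(s)-X_3(s)\|^{q-1+\gamma}$. Because $q-1+\gamma$ may exceed $1$, I would not apply subadditivity to the sum directly; instead I factor the exponent as $(q-1)+\gamma$, writing $\|X_1-X_3\|^{q-1+\gamma}=\|X_1-X_3\|^{q-1}\|X_1-X_3\|^{\gamma}$ and applying the elementary subadditivity inequality (Lemma~\ref{lemma:anUsefulIneq}) to each factor separately (legitimate since $q-1<1$ and $\gamma<1$ under \Cref{assump:conditionsOnParameters}). Expanding the product gives four terms of type $\|X_1\|^{q-1}\|X_3\|^{\gamma}$; each is decoupled by H\"older's inequality with the conjugate exponents $\tfrac{q-1+\gamma}{q-1}$ and $\tfrac{q-1+\gamma}{\gamma}$, so every factor becomes a $(q-1+\gamma)$-th moment of $X_1$ or $X_3$. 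Feeding in the polynomial-in-time moment bounds of Lemma~\ref{lemma:expectationBoundOfX1} and Lemma~\ref{lemma:expectationBoundOfX3}, namely $\mathbb{E}\|X_1(s)\|^{r}\leq (c\,H(s))^{r}$ and $\mathbb{E}\|X_3(s)\|^{r}\leq (c_b\,H(s))^{r}$ with $H(s)=s(d\beta^{-1/\alpha}+1)+1$, produces exactly the constant structure $(c^{q-1}+c_b^{q-1})(c^{\gamma}+c_b^{\gamma})H(s)^{q-1+\gamma}$; the $L$ term is handled identically, giving $(c^{q-1}+c_b^{q-1})H(s)^{q-1}$. Since $H$ is increasing, $\int_0^t H(s)^{\theta}\mathrm{d}s\leq t\,H(t)^{\theta}$, which collapses the $s$-integral into the $qt(\dots)$ form of the claimed bound.

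The main obstacle is the fractional-moment bookkeeping in the core step: keeping every exponent strictly below $\alpha$ so that all invoked moments are finite. The H\"older split forces control of the $(q-1+\gamma)$-th moments of both processes, which are finite precisely because $q<\alpha$ and $\gamma<1$ give $q-1+\gamma<\alpha$; verifying that the chosen H\"older exponents are admissible and that the moment estimates of Lemmas~\ref{lemma:expectationBoundOfX1}--\ref{lemma:expectationBoundOfX3} apply at those orders is exactly where \Cref{assump:conditionsOnParameters} enters. In contrast to the $X_1$-versus-$X_2$ estimate, there is no time-discretization gap to track here, so once the synchronous coupling and the drift splitting are in place the remainder is a direct, if bookkeeping-heavy, computation.
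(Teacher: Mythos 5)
Your proposal is correct and follows the same architecture as the paper's proof: the coupling formula of Lemma~\ref{lemma:WassersteinFormula} instantiated with the synchronous coupling, Cauchy--Schwarz, the drift splitting into $c_\alpha\Vert\nabla f(X_1)-\nabla f(X_3)\Vert$ and $\Vert c_\alpha\nabla f(X_3)+b(X_3,\alpha)\Vert$ controlled by \Cref{assump:HolderContinuity} and \Cref{assump:uniformlyBounded}, the moment bounds of Lemma~\ref{lemma:expectationBoundOfX1} and Lemma~\ref{lemma:expectationBoundOfX3}, and the final collapse $\int_0^t H(s)^{\theta}\,\mathrm{d}s\leq t\,H(t)^{\theta}$ with $H(s)=s(d\beta^{-1/\alpha}+1)+1$. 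The one genuine divergence is the order of decoupling: the paper applies H\"older with the conjugate pair $(p_1,q_1)$ from \Cref{assump:conditionsOnParameters} to the expectation of the product $\Vert X_1-X_3\Vert^{q-1}\cdot\Vert c_\alpha\nabla f(X_1)+b(X_3,\alpha)\Vert$ \emph{before} inserting moment bounds, then uses Lemma~\ref{lemma:anUsefulIneq} and Minkowski to read off moments at the orders $(q-1)p_1<1$ and $\gamma q_1<1$; you instead reduce the drift gap pointwise to $M\Vert X_1-X_3\Vert^{\gamma}+L$ and decouple the resulting cross terms $\Vert X_i\Vert^{q-1}\Vert X_j\Vert^{\gamma}$ with the intrinsic conjugate exponents $\tfrac{q-1+\gamma}{q-1}$ and $\tfrac{q-1+\gamma}{\gamma}$, so that every moment is taken at the single order $q-1+\gamma$. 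Both routes produce exactly the constant structure $(c^{q-1}+c_b^{q-1})(c^{\gamma}+c_b^{\gamma})$ and the two powers of $H$, and yours is marginally more economical in that it only needs $q-1+\gamma<1$ rather than the full set of constraints in \Cref{assump:conditionsOnParameters}. One small correction to your justification: the threshold you invoke, $q-1+\gamma<\alpha$, is the finiteness threshold for fractional moments of the stable noise, but it is not what licenses Lemmas~\ref{lemma:expectationBoundOfX1}--\ref{lemma:expectationBoundOfX3}, which are stated only for orders $\lambda\in(0,1)$; what you actually need is $q-1+\gamma<1$, which does follow from \Cref{assump:conditionsOnParameters} via $q-1<1/p_1$, $\gamma<1/q_1$, and $1/p_1+1/q_1=1$, so the argument is intact once this is stated correctly. (Incidentally, your explicit handling of the synchronous coupling and of the left limits $X_i(s-)$ under $\mathrm{d}s$ is cleaner than the paper, which writes the coupling formula as an equality with an unqualified expectation.)
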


\begin{proof} From Lemma~\ref{lemma:WassersteinFormula}, we have
\begin{align*}
\mathcal{W}_{q}^{q}(\mu_{1t}, \mu_{3t})=&\mathbb{E}\Big[\int_0^t q\,\Vert X_1(s)-X_3(s))\Vert^{q -2} \langle X_1(s)-X_3(s),b_1 (X_1(s-),\alpha)-b (X_3(s-),\alpha)\rangle\text{d}s\Big]\\
=&\int_{0}^{t}q\,\Vert X_1(s)-X_3(s))\Vert^{q -2} \langle X_1(s)-X_3(s),b_1 (X_1(s-),\alpha)-b (X_3(s-),\alpha)\rangle\text{d}s\\
\leq&\mathbb{E}\Big[\int_{0}^{t}q\,\Vert X_1(s)-X_3(s)\Vert^{q-1}\Vert c_\alpha\nabla f(X_1(s))+b (X_3(s),\alpha)\Vert\text{d}s\Big]\\
=&q\int_{0}^{t}\mathbb{E}\Big[\Vert X_1(s)-X_3(s)\Vert^{q-1}\Vert c_\alpha\nabla f(X_1(s))+b (X_3(s),\alpha)\Vert\Big]\text{d}s\\
\leq& q\int_{0}^{t}\Big[\mathbb{E}\Vert X_1(s)-X_3(s)\Vert^{(q-1)p_1}\Big]^{\frac{1}{p_1}}\Big[\mathbb{E}\Vert c_\alpha\nabla f(X_1(s))+b (X_3(s),\alpha)\Vert^{q_1}\Big]^{\frac{1}{q_1}}\text{d}s,
\end{align*}
where we have used Cauchy-Schwarz inequality in the third line and Holder's inequality in the last line.
% If $(q-1)p_1>1$, using Minkowski's inequality and Lemma \ref{lemma:expecataionBound} gives
% \begin{align*}
% \Big[\mathbb{E}\Vert X_1(s)-X_2(s)\Vert^{(q-1)p_1}\Big]^{\frac{1}{(q-1)p_1}}&\leq\Big[\mathbb{E}\Vert X_1(s)\Vert^{(q-1)p_1}\Big]^{\frac{1}{(q-1)p_1}}+\Big[\mathbb{E}\Vert X_2(s)\Vert^{(q-1)p_1}\Big]^{\frac{1}{(q-1)p_1}}\\
% &\leq CONST.
% \end{align*}

Since $(q-1)p_1< 1$ by Assumption~\Cref{assump:conditionsOnParameters}, using Lemma \ref{lemma:anUsefulIneq} twice, we have:
\begin{align*}
\Big(\mathbb{E}\Vert X_1(s)-X_3(s)\Vert^{(q-1)p_1}\Big)^{\frac{1}{p_1}}\leq&\Big(\mathbb{E}\Vert X_1(s)\vert^{(q-1)p_1}+\mathbb{E}\Vert X_3(s)\Vert^{(q-1)p_1}\Big)^{\frac{1}{p_1}}\\
\leq&\Big[\mathbb{E}\Big(\Vert X_1(s)\Vert^{(q-1)p_1}\Big)\Big]^{\frac{1}{p_1}}+\Big[\mathbb{E}\Big(\Vert X_3(s)\Vert^{(q-1)p_1}\Big)\Big]^{\frac{1}{p_1}}
\end{align*}
Then, by applying Lemma~\ref{lemma:expectationBoundOfX1} and Lemma~\ref{lemma:expectationBoundOfX3} we obtain:
\begin{align*}
\Big(\mathbb{E}\Vert X_1(s)- & X_3(s)\Vert^{(q-1)p_1}\Big)^{\frac{1}{p_1}}\leq  \Big(c\Big(s\Big(\frac{d}{\beta^{1/\alpha}}+1\Big)+1\Big)\Big)^{q-1} + \Big(c_b\Big(s\Big(\frac{d}{\beta^{1/\alpha}}+1\Big)+1\Big)\Big)^{q-1}.
\end{align*}
Now, consider the following quantity
\begin{align*}
\Big[\mathbb{E}\Vert c_\alpha\nabla f(X_1(s))+b (X_3(s),\alpha)\Vert^{q_1}&\Big]^{\frac{1}{ q_1}}\\
\leq&\Big[\mathbb{E}\big(\Vert c_\alpha\nabla f(X_1(s))-c_\alpha\nabla f(X_3(s))\Vert+\Vert c_\alpha\nabla f(X_3(s))+b (X_3(s),\alpha)\Vert\big)^{q_1}\Big]^{\frac{1}{ q_1}}\\
\leq&\Big[\mathbb{E}\big(M\Vert X_1(s)-X_3(s)\Vert^{\gamma}+L\big)^{q_1}\Big]^{\frac{1}{ q_1}}\\
\leq&\Big[\mathbb{E}\Big(M\Vert X_1(s)\Vert^{\gamma}+ M\Vert X_3(s)\Vert^{\gamma} + L\Big)^{q_1}\Big]^{\frac{1}{q_1}}\\
\leq&\Big[\mathbb{E}\Big(M^{q_1}\Vert X_1(s)\Vert^{\gamma q_1}\Big)\Big]^{\frac{1}{q_1}}+\Big[\mathbb{E}\Big(M^{q_1}\Vert X_3(s)\Vert^{\gamma q_1}\Big)\Big]^{\frac{1}{q_1}} + L,
\end{align*}
where we have used Assumption~\Cref{assump:HolderContinuity}, Assumption~\Cref{assump:uniformlyBounded}, Lemma~\ref{lemma:anUsefulIneq} and Minkowski's inequality. By Lemma~\ref{lemma:expectationBoundOfX1} and Lemma~\ref{lemma:expectationBoundOfX3}, we have
\begin{align*}
\Big[\mathbb{E}\Vert c_\alpha\nabla f(X_1(s))+b (X_3(s),\alpha)\Vert^{q_1}\Big]^{\frac{1}{ q_1}}\leq& M\Big(c\Big(s\Big(\frac{d}{\beta^{1/\alpha}}+1\Big)+1\Big)\Big)^{\gamma} + M\Big(c_b\Big(s\Big(\frac{d}{\beta^{1/\alpha}}+1\Big)+1\Big)\Big)^{\gamma} + L.
\end{align*}
By combining the above inequalities, we get
\begin{align*}
\mathbb{E}\Big[\int_0^t q&\,\Vert X_1(s)-X_3(s))\Vert^{q -2} \langle X_1(s)-X_3(s),b_1 (X_1(s-),\alpha)-b (X_3(s-),\alpha)\rangle\text{d}s\Big]\\
\leq&q\int_{0}^{t}\Big(\Big(c\Big(s\Big(\frac{d}{\beta^{1/\alpha}}+1\Big)+1\Big)\Big)^{q-1} + \Big(c_b\Big(s\Big(\frac{d}{\beta^{1/\alpha}}+1\Big)+1\Big)\Big)^{q-1}\Big)\Big(M\Big(c\Big(s\Big(\frac{d}{\beta^{1/\alpha}}+1\Big)+1\Big)\Big)^{\gamma}\\
& + M\Big(c_b\Big(s\Big(\frac{d}{\beta^{1/\alpha}}+1\Big)+1\Big)\Big)^{\gamma} + L\Big)\text{d}s\\
% &\leq\sum_{j=0}^{k-1}q\eta\Big(P_1(\eta)P_2(\eta)(j+1) + P_1(\eta)Q_2(\eta)(j+1) + P_2(\eta)Q_1(\eta)(j+1) + Q_1(\eta)Q_2(\eta)\Big)\\
=& q\int_{0}^{t}\Big(M(c^{q-1}+c_b^{q-1})(c^{\gamma}+c_b^{\gamma})\Big(s\Big(\frac{d}{\beta^{1/\alpha}}+1\Big)+1\Big)^{q-1+\gamma} + L(c^{q-1}+c_b^{q-1})\Big(s\Big(\frac{d}{\beta^{1/\alpha}}+1\Big)+1\Big)^{q-1}\Big)\text{d}s\\
\leq& qt\Big(M(c^{q-1}+c_b^{q-1})(c^{\gamma}+c_b^{\gamma})\Big(t\Big(\frac{d}{\beta^{1/\alpha}}+1\Big)+1\Big)^{q-1+\gamma} + L(c^{q-1}+c_b^{q-1})\Big(t\Big(\frac{d}{\beta^{1/\alpha}}+1\Big)+1\Big)^{q-1}\Big).
% =& \frac{qM}{q+\gamma}(c^{q-1}+c_b^{q-1})(c^{\gamma}+c_b^{\gamma})\Big(\Big(\frac{t}{\beta^{1/\alpha}}+1\Big)^{q+\gamma}-1\Big) + L(c^{q-1}+c_b^{q-1})\Big(\Big(\frac{t}{\beta^{1/\alpha}}+1\Big)^{q}-1\Big).
\end{align*}
The final conclusion follows from this inequality.
\end{proof}

\subsubsection{Proof of \Cref{cor:WassersteinOfX1X3}}
\begin{proof}
First, we replace $t$ by $k\eta$. Then, by following the same lines of the proof of Corollary~\ref{cor:BoundWasserstein}, we get
\begin{align*}
\mathcal{W}_{q}^{q}(\mu_{1t}, \mu_{3t})\leq& C(k^{q+\gamma}\eta + k^{q+\gamma}\eta^{q}\beta^{-\frac{q-1}{\alpha}}d^{q-1+\gamma}).
\end{align*}
By assumption \Cref{assump:conditionsOnParameters}, $q-1<1/p_1$ and $\gamma<1/q_1$. It implies that $d^{q-1+\gamma}<d^{1/p_1+1/q_1}=d$. Hence, we have
\begin{align*}
\mathcal{W}_{q}^{q}(\mu_{1t}, \mu_{3t})\leq& C(k^{q+\gamma}\eta + k^{q+\gamma}\eta^{q}\beta^{-\frac{q-1}{\alpha}}d).
\end{align*}
\end{proof}

\subsubsection{Proof of Corollary~\ref{cor:distanceFromX1toX3}}

% \begin{corollary}\label{cor:distanceFromX1toX3} There exists a constant $C\geq0$ such that the following inequality holds:
% \begin{align*}
% \vert\mathbb{E}[f(X_1(t))] - \mathbb{E}[f(X_3(t))]\vert\leq& \Big(M(c^{\gamma}+c_b^{\gamma})\Big(\frac{t^\gamma}{\beta^{\gamma/\alpha}}+1\Big) + B\Big)\Bigg((qt)^{1/q}\Big(M^{1/q}(c^{q-1}+c_b^{q-1})^{1/q}(c^{\gamma}+c_b^{\gamma})^{1/q}\Big(\frac{t^{(q-1+\gamma)/q}}{\beta^{(q-1+\gamma)/(q\alpha)}}\\
% &+1\Big) + L^{1/q}(c^{q-1}+c_b^{q-1})^{1/q}\Big(\frac{t^{(q-1)/q}}{\beta^{(q-1)/(q\alpha)}}+1\Big)\Big)\Bigg)\\
% \leq& C\Big(k^{\gamma+\frac{\gamma+q}{q}}\eta^{\gamma+\frac{1}{q}}\beta^{-\frac{\gamma}{\alpha}} + (k\eta)^{\frac{1}{q}}\Big).
% % \Big(\frac{t}{\beta^{1/\alpha}}+1\Big)\Big(\big(Mc^{\gamma} + Mc_b^{\gamma}\big)\Big(\frac{t}{\beta^{1/\alpha}}+1\Big)^{\gamma} + B\Big)\Bigg(\Big(\frac{qM}{q+\gamma}\Big)^{\frac{1}{q}}(c^{q-1}+c_b^{q-1})^{\frac{1}{q}}(c^{\gamma}\\
% % &+c_b^{\gamma})^{\frac{1}{q}}\Big(\frac{t}{\beta^{1/\alpha}}+1\Big)^{\frac{\gamma}{q}} + L^{\frac{1}{q}}(c^{q-1}+c_b^{q-1})^{\frac{1}{q}}\Bigg)\\
% % \leq&C\Big(t^{\gamma+\frac{q+\gamma}{q}}\beta^{-\frac{\gamma}{q\alpha}} + 1\Big).
% \end{align*}
% \end{corollary}

\begin{proof}
By Lemma~\ref{lemma:DifferenceExpectation}, Lemma~\ref{lemma:expectationBoundOfX1} and Lemma~\ref{lemma:expectationBoundOfX3}, we have
\begin{align*}
c_\alpha\vert\mathbb{E}[f(X_1(t))] - \mathbb{E}[f(X_3(t))]\vert\leq& \Big(M\big(\mathbb{E}\Vert X_1(t)\Vert^{\gamma p}\big)^{\frac{1}{p}} + M\big(\mathbb{E}\Vert X_3(t)\Vert^{\gamma p}\big)^{\frac{1}{p}} + B\Big)\mathcal{W}_q(\mu_{1t},\mu_{3t})\\
\leq& \Big(M\Big(c\Big(t\Big(\frac{d}{\beta^{1/\alpha}}+1\Big)+1\Big)\Big)^{\gamma} + M\Big(c_b\Big(t\Big(\frac{d}{\beta^{1/\alpha}}+1\Big)+1\Big)\Big)^{\gamma} + B\Big)\mathcal{W}_q(\mu_{1t},\mu_{3t}).
\end{align*}
Then by Theorem~\ref{thm:WassersteinOfX1X3}, we have
\begin{align*}
c_\alpha\vert\mathbb{E}[f(X_1(t))] &- \mathbb{E}[f(X_3(t))]\vert\\
\leq& \Big(M\Big(c\Big(t\Big(\frac{d}{\beta^{1/\alpha}}+1\Big)+1\Big)\Big)^{\gamma} + M\Big(c_b\Big(t\Big(\frac{d}{\beta^{1/\alpha}}+1\Big)+1\Big)\Big)^{\gamma} + B\Big)\Bigg(qt\Big(M(c^{q-1}+c_b^{q-1})(c^{\gamma}+c_b^{\gamma})\\
&\Big(t\Big(\frac{d}{\beta^{1/\alpha}}+1\Big)+1\Big)^{q-1+\gamma} + L(c^{q-1}+c_b^{q-1})\Big(t\Big(\frac{d}{\beta^{1/\alpha}}+1\Big)+1\Big)^{q-1}\Big)\Bigg)^{\frac{1}{q}}.
\end{align*}
Applying Lemma~\ref{lemma:anUsefulIneq} twice, we get
\begin{align*}
c_\alpha\vert\mathbb{E}[f(X_1(t))]  - &\mathbb{E}[f(X_3(t))]\vert\\
\leq& \Big(M(c^{\gamma}+c_b^{\gamma})\Big(\frac{t^\gamma d^\gamma}{\beta^{\gamma/\alpha}}+t^\gamma+1\Big) + B\Big)\Bigg((qt)^{1/q}\Big(M^{1/q}(c^{q-1}+c_b^{q-1})^{1/q}(c^{\gamma}+c_b^{\gamma})^{1/q}\\
&\Big(\frac{td}{\beta^{1/\alpha}}+t+1\Big)^{(q-1+\gamma)/q} + L^{1/q}(c^{q-1}+c_b^{q-1})^{1/q}\Big(\frac{td}{\beta^{1/\alpha}}+t+1\Big)^{(q-1)/q}\Big)\Bigg)\\
\leq& \Big(M(c^{\gamma}+c_b^{\gamma})\Big(\frac{t^\gamma d^\gamma}{\beta^{\gamma/\alpha}}+t^\gamma +1\Big) + B\Big)\Bigg((qt)^{1/q}\Big(M^{1/q}(c^{q-1}+c_b^{q-1})^{1/q}(c^{\gamma}+c_b^{\gamma})^{1/q}\Big(\frac{(td)^{(q-1+\gamma)/q}}{\beta^{(q-1+\gamma)/(q\alpha)}}\\
&+t^{{(q-1+\gamma)/q}} +1\Big) + L^{1/q}(c^{q-1}+c_b^{q-1})^{1/q}\Big(\frac{(td)^{(q-1)/q}}{\beta^{(q-1)/(q\alpha)}}+t^{{(q-1)/q}} +1\Big)\Big)\Bigg).
\end{align*}
Now, by replacing $t=k\eta$ we find that, among the terms containing $\beta$, the largest power of $d$, the largest power of $k$ and the smallest power of $\eta$ are $\gamma + \frac{q-1+\gamma}{q}$, $\gamma+\frac{\gamma+q}{q}$ and $\gamma+\frac{1}{q}$, respectively. For the smallest power of $\beta^{-1}$, we need to compare the following quantities: $\gamma/\alpha$, $(q-1+\gamma)/(q\alpha)$ and $(q-1)/(q\alpha)$.

It is obvious that $(q-1+\gamma)/(q\alpha)>(q-1)/(q\alpha)$. Next, from the relation $\gamma<1/p=(q-1)/q$, we have $\gamma/\alpha<(q-1)/(q\alpha)$. Thus, the smallest power of $\beta^{-1}$ is $\gamma/\alpha$. Hence, we have the following bound:
\begin{align*}
c_\alpha\vert\mathbb{E}[f(X_1(t))]  - \mathbb{E}[f(X_3(t))]\vert\leq& C\Big(k^{\gamma+\frac{\gamma+q}{q}}\eta^{\gamma+\frac{1}{q}}\beta^{-\frac{\gamma}{\alpha}}d^{\gamma + \frac{q-1+\gamma}{q}} + k^{\gamma+\frac{\gamma + q}{q}}\eta^{\frac{1}{q}}\Big),
\end{align*}
for some constant $C>0$. For the power of $d$, using that $\gamma <1/p$, $q-1<1/p_1$ and $\gamma<1/q_1$ we have
\begin{align*}
\gamma + \frac{q-1+\gamma}{q}\leq& 1/p + \frac{1/p_1+1/q_1}{q}\\
=& 1/p+1/q\\
=&1.
\end{align*}
Finally, we have
\begin{align*}
c_\alpha\vert\mathbb{E}[f(X_1(t))]  - \mathbb{E}[f(X_3(t))]\vert\leq& C\Big(k^{\gamma+\frac{\gamma+q}{q}}\eta^{\gamma+\frac{1}{q}}\beta^{-\frac{\gamma}{\alpha}}d + k^{\gamma+\frac{\gamma + q}{q}}\eta^{\frac{1}{q}}\Big).
\end{align*}
\end{proof}

\subsection{Proof of Lemma~\ref{lemma:distanceFromX3ToInvariant2}}

% \begin{corollary}\label{cor:distanceFromX3ToInvariant} Let $\hat{W}$ be the Gibbs output of the invariant measure $\pi\propto\exp(-\beta f)$ of \eqref{FracLangevinGibbs}. We have the following estimate holds:
% \begin{align*}
% \vert\mathbb{E}[f(X_3(t))] - \mathbb{E}[f(\hat{W})]\vert\leq&\Big(M\big(c_b(t+1)\big)^{\gamma} + M\big(\mathbb{E}\Vert \hat{W}\Vert^{\gamma p}\big)^{\frac{1}{p}} + B\Big)\mathcal{W}_q(\mu_{3t},\pi)\\
% =&\mathcal{O}(t^{\gamma}e^{-t})
% \end{align*}
% \end{corollary}

\begin{proof}
By Lemma~\ref{lemma:DifferenceExpectation}, we have
\begin{align*}
c_\alpha\vert\mathbb{E}[f(X_3(t))] - \mathbb{E}[f(\hat{W})]\vert\leq& \Big(M\big(\mathbb{E}\Vert X_3(t)\Vert^{\gamma p}\big)^{\frac{1}{p}} + M\big(\mathbb{E}\Vert \hat{W}\Vert^{\gamma p}\big)^{\frac{1}{p}} + B\Big)\mathcal{W}_q(\mu_{3t},\pi).
\end{align*}
Assumption~\Cref{assump:momentsOfInvariant} says that $\mathbb{E}\Vert \hat{W}\Vert^{\gamma p}$ is bounded by a constant depending on $b,m$ and $\beta$. In addition, by Assumption~\Cref{assump:ergodicity}, $\lim_{t\rightarrow\infty}\mathcal{W}_{\gamma p}(\mu_{3t},\pi)=0$, and by Theorem 7.12 in \cite{villani2003topics}, it follows that
\begin{align*}
\lim_{t\rightarrow\infty}\mathbb{E}\Vert X_3(t)\Vert^{\gamma p}=\mathbb{E}\Vert \hat{W}\Vert^{\gamma p}.
\end{align*}
Thus, $\mathbb{E}\Vert X_3(t)\Vert^{\gamma p}$ is bounded by a constant independent of $t$. Finally, since $q<\alpha$, by Assumption~\Cref{assump:ergodicity} again, $\mathcal{W}_{q}(\mu_{3t},\pi)\leq C\beta e^{-\lambda_*t/\beta}$. Hence, using the bound in Assumption~\Cref{assump:momentsOfInvariant}, there exists constant $C$ such that
\begin{align*}
\vert\mathbb{E}[f(X_3(t))] - \mathbb{E}[f(\hat{W})]\vert\leq C\beta\frac{b+d/\beta}{m}\exp(- \lambda_*\beta^{-1}t).
\end{align*}
\end{proof}

\subsection{Proof of Lemma~\ref{lem:distanceInvariantToMinimumInExpectation}}

% \begin{corollary}\label{cor:distanceInvariantToMinimumInExpectation} For $\beta>0$, we have
% \begin{align*}
% \mathbb{E}[f(\hat{W})] - \min_{w\in\mathbb{R}^d}f(w)\leq\frac{\beta^{-\gamma-1} Mc_\alpha^{-1}}{1+\gamma} + \beta^{-1}\log\Big(\frac{(2e(b+d/\beta))^{d/2}\Gamma(d/2+1)\beta^d}{(dm)^{d/2}}\Big).
% \end{align*}
% \end{corollary}

\begin{proof}
The proof is adapted from \cite{raginsky17a}, Section 3.5. First, we have the decomposition:
\begin{align*}
\mathbb{E}[f(\hat{W})]=&\int_{\mathbb{R}^d}f(w)\frac{\exp(-\beta f(w))}{\int_{\mathbb{R}^d}\exp(-\beta f(v))\text{d}v}\text{d}w\\
=&\frac{1}{\beta}\Big(-\int_{\mathbb{R}^d}\frac{\exp(-\beta f(w))}{\int_{\mathbb{R}^d}\exp(-\beta f(v))\text{d}v}\log\frac{\exp(-\beta f(w))}{\int_{\mathbb{R}^d}\exp(-\beta f(v))\text{d}v}\text{d}w - \log\int_{\mathbb{R}^d}\exp(-\beta f(v))\text{d}v\Big).
\end{align*}
The first term in the parentheses is the differential entropy of the probability density of $\hat{W}$, which has a finite second moment (due to Assumption~\Cref{assump:momentsOfInvariant}). Hence, it is upper-bounded by the differential entropy of a Gaussian density with the same second moment:
\begin{align*}
-\int_{\mathbb{R}^d}\frac{\exp(-\beta f(w))}{\int_{\mathbb{R}^d}\exp(-\beta f(v))\text{d}v}\log\frac{\exp(-\beta f(w))}{\int_{\mathbb{R}^d}\exp(-\beta f(v))\text{d}v}\text{d}w\leq\frac{d}{2}\log\Big(\frac{2\boldsymbol{\pi} e (b+d/\beta)}{dm}\Big).
\end{align*}
By Lemma~\ref{lemma:lowerBoundForNormalizedInvariant}, we have
\begin{align*}
-\log\int_{\mathbb{R}^d}\exp(-\beta f(w))\text{d}w\leq\beta f(w^*)+\frac{\beta^{-\gamma} Mc_\alpha^{-1}}{1+\gamma} - \log\Big(\frac{\boldsymbol{\pi}^{d/2}\beta^{-d}}{\Gamma(d/2+1)}\Big).
\end{align*}
Then, it implies that
\begin{align*}
\mathbb{E}[f(\hat{W})]\leq& \frac{d\beta^{-1}}{2}\log\Big(\frac{2\boldsymbol{\pi} e (b+d/\beta)}{dm}\Big) + f(w^*) + \frac{\beta^{-\gamma-1} Mc_\alpha^{-1}}{1+\gamma} - \beta^{-1}\log\Big(\frac{\boldsymbol{\pi}^{d/2}\beta^{-d}}{\Gamma(d/2+1)}\Big)\\
=& f(w^*) + \frac{\beta^{-\gamma-1} Mc_\alpha^{-1}}{1+\gamma} + \beta^{-1}\log\Big(\frac{(2e(b+d/\beta))^{d/2}\Gamma(d/2+1)\beta^d}{(dm)^{d/2}}\Big),
\end{align*}
which leads to desired result.
\end{proof}

\subsection{Proof of \Cref{thm:WassersteinForSampling}}
\begin{proof}
By triangular inequality, we have
\begin{align*}
\mathcal{W}_{q}(\mu_{2t}, \pi)\leq\mathcal{W}_{q}(\mu_{2t}, \mu_{1t})+\mathcal{W}_{q}(\mu_{1t}, \mu_{3t})+\mathcal{W}_{q}(\mu_{3t}, \pi).
\end{align*}
Then, using \Cref{cor:BoundWasserstein}, \Cref{cor:WassersteinOfX1X3} and assumption \Cref{assump:ergodicity}, we get
\begin{align*}
\mathcal{W}_{q}(\mu_{2t}, \pi)\leq& C\Big((k^2\eta + k^2\eta^{1+\gamma/\alpha}\beta^{-\gamma(q-1)/\alpha}d)^{1/q} + (k^{q+\gamma}\eta + k^{q+\gamma}\eta^{q}\beta^{-(q-1)/\alpha}d)^{1/q} + \beta e^{- \lambda_* k\eta /\beta}\Big)\\
\leq& C\Big(k^{2/q}\eta^{1/q} + k^{2/q}\eta^{1/q+\gamma/(q\alpha)}\beta^{-\gamma(q-1)/(q\alpha)}d^{1/q} + k^{1+\gamma/q}\eta^{1/q} + k^{1+\gamma/q}\eta\beta^{-(q-1)/(q\alpha)}d^{1/q}\\
& + \beta e^{- \lambda_* k\eta /\beta}\Big),
\end{align*}
where, we have used \Cref{lemma:anUsefulIneq} for the second inequality. Then, similar to the proof of \Cref{cor:BoundWasserstein}, we obtain
\begin{align*}
\mathcal{W}_{q}(\mu_{2t}, \pi)\leq& C\Big(k^{\max\{2,q+\gamma\}/q}\eta^{1/q} + k^{\max\{2,q+\gamma\}/q}\eta^{1/q+\gamma/(q\alpha)}\beta^{-\gamma(q-1)/(q\alpha)}d^{1/q} + \beta e^{- \lambda_* k\eta /\beta}\Big).
\end{align*}
\end{proof}

\subsection{Proof of \Cref{thm:StochGradForX1X2}}
\begin{proof}
% In this stochastic gradient case, we still use the same notation for $X_2$. To prove this theorem, we only need to show that the bound for the term $\big\vert\mathbb{E}[f(X_1(k\eta))] - \mathbb{E}[f(X_2(k\eta))]\big\vert$ as in \Cref{cor:distanceFromX1ToX2_2} remains unchanged in this case.
Since each function $x\mapsto f^{(i)}(x)$ satisfies assumptions \Cref{assump:boundedGradAtZero}-\Cref{assump:momentsOfInvariant}, it is easy to check that $f_k$ also satisfies these assumptions (with the same constants and the same parameters) for all $k$. Then by repeating exactly the same lines as in the proof of \Cref{lemma:expecataionBound}, we obtain the same estimates for the moments of $X_2$. Now by following the same steps as in the proof of \Cref{thm:boundOfWasserstein}, we first have
\begin{align*}
\mathcal{W}_{q}^{q}(\mu_{1t}, \mu_{2t})\leq& q\,\sum_{j=0}^{k-1}\int_{j\eta}^{(j+1)\eta}\Big[\mathbb{E}\Vert X_1(s)-X_2(s)\Vert^{(q-1)p_1}\Big]^{\frac{1}{p_1}}\Big[\mathbb{E}\Vert c_\alpha(\nabla f(X_1(s))-\nabla f_k(X_2(j\eta)))\Vert^{q_1}\Big]^{\frac{1}{q_1}}\text{d}s,
\end{align*}
then
\begin{align*}
\Big(\mathbb{E}\Vert X_1(s)&-X_2(s)\Vert^{(q-1)p_1} \Big)^{\frac{1}{p_1}}\\
\leq& c^{\frac{1}{p_1}} + (\mathbb{E}\Vert X_2(0)\Vert^{(q-1)p_1})^{\frac{1}{p_1}} + \eta^{q-1}\Big(M^{q-1}(\mathbb{E}\Vert X_2(0)\Vert^{(q-1)p_1\gamma})^{\frac{1}{p_1}} + B^{(q-1)} \Big) + \Big(\frac{\eta}{\beta}\Big)^{\frac{q-1}{\alpha}}l_{\alpha,(q-1)p_1,d}^{\frac{1}{p_1}}\\
& + (j+1)^{\frac{1}{p_1}}\Big(\Big(c\eta\Big(\frac{d}{\beta^{1/\alpha}}\Big)\Big)^{\frac{1}{p_1}} + (c\eta)^{\frac{1}{p_1}} + (2\eta (b+m))^{\frac{(q-1)}{2}} + 2^{\frac{(q-1)}{2}}(\eta B)^{(q-1)} +\Big(\frac{\eta}{\beta}\Big)^{\frac{(q-1)}{\alpha}}l_{\alpha,(q-1)p_1,d}^{\frac{1}{p_1}}\\
&+ \eta^{q-1} M^{q-1} \Big((2\eta (b+m))^{\frac{(q-1)\gamma}{2}} + 2^{\frac{(q-1)\gamma}{2}}(\eta B)^{(q-1)\gamma} +\Big(\frac{\eta}{\beta}\Big)^{\frac{(q-1)\gamma}{\alpha}}l_{\alpha,(q-1)p_1\gamma,d}^{\frac{1}{p_1}}\Big)\Big)\\
=&Q_1(\eta) + (j+1)^{\frac{1}{p_1}}P_1(\eta),
\end{align*}
where $P_1(\eta)$ and $Q_1(\eta)$ are defined in \Cref{thm:boundOfWasserstein}. Now, by Minkowski's inequality, we have
\begin{align*}
\Big[\mathbb{E}\Vert c_\alpha(\nabla f(X_1(s))-\nabla f_k(X_2(j\eta)))\Vert^{q_1}\Big]^{\frac{1}{ q_1}}=&\Big[\mathbb{E}\Vert c_\alpha(\nabla f(X_1(s))-\nabla f(X_2(j\eta)) + \nabla f(X_2(j\eta))\\
&- \nabla f_k(X_2(j\eta)))\Vert^{q_1}\Big]^{\frac{1}{ q_1}}\\
\leq&\Big[\mathbb{E}\Vert c_\alpha(\nabla f(X_1(s))-\nabla f(X_2(j\eta)))\Vert^{q_1}\Big]^{\frac{1}{q_1}} + \Big[\mathbb{E}\Vert c_\alpha(\nabla f(X_2(j\eta))\\
&- \nabla f_k(X_2(j\eta)))\Vert^{q_1}\Big]^{\frac{1}{ q_1}}.
\end{align*}
As in the proof of \Cref{thm:boundOfWasserstein}, the following inequality holds:
\begin{align*}
\Big[\mathbb{E}\Vert c_\alpha\nabla f(X_1(s))-c_\alpha\nabla f(X_2(j\eta))\Vert^{q_1}\Big]^{\frac{1}{ q_1}}\leq& M(\mathbb{E}\Vert X_2(0)\Vert^{\gamma q_1})^{\frac{1}{q_1}} + Mc^{\frac{1}{q_1}} + M(j+1)^{\frac{1}{q_1}}\Big(\Big(c\eta\Big(\frac{d}{\beta^{1/\alpha}}\Big)\Big)^{\frac{1}{q_1}}\\
& + (c\eta)^{\frac{1}{q_1}} + (2\eta (b+m))^{\frac{\gamma}{2}} +2^{\frac{\gamma}{2}}(\eta B)^{\gamma} +\Big(\frac{\eta}{\beta}\Big)^{\frac{\gamma}{\alpha}}l_{\alpha,\gamma q_1,d}^{\frac{1}{q_1}}\Big)\\
=&Q_2 + (j+1)^{\frac{1}{q_1}}P_2(\eta),
\end{align*}
where $P_2(\eta)$ and $Q_2$ are defined in \Cref{thm:boundOfWasserstein}. Using the additional assumption, \Cref{lemma:expecataionBound}, and \Cref{lemma:anUsefulIneq}, we get
\begin{align*}
\Big[\mathbb{E}\Vert c_\alpha(\nabla f(X_2(j\eta)) - \nabla f_k(X_2(j\eta)))\Vert^{q_1}\Big]^{\frac{1}{ q_1}}\leq&\delta\Big[\mathbb{E}\Big(M^{q_1}\Vert X_2(j\eta)\Vert^{\gamma q_1}\Big)\Big]^{\frac{1}{q_1}}\\
\leq& \delta\Big[M^{q_1}(\mathbb{E}\Vert X_2(0)\Vert^{\gamma q_1}) + M^{q_1}j\Big((2\eta (b+m))^{\frac{\gamma q_1}{2}} +2^{\frac{\gamma q_1}{2}}(\eta B)^{\gamma q_1}\\
& +\Big(\frac{\eta}{\beta}\Big)^{\frac{\gamma q_1}{\alpha}}l_{\alpha,\gamma q_1,d}\Big)\Big]^{\frac{1}{q_1}}\\
\leq& \delta M(\mathbb{E}\Vert X_2(0)\Vert^{\gamma q_1})^{\frac{1}{q_1}} + \delta M(j+1)^{\frac{1}{q_1}}\Big((2\eta (b+m))^{\frac{\gamma }{2}} +2^{\frac{\gamma }{2}}(\eta B)^{\gamma }\\
& +\Big(\frac{\eta}{\beta}\Big)^{\frac{\gamma }{\alpha}}l_{\alpha,\gamma q_1,d}^{\frac{1}{q_1}}\Big).
\end{align*}
By combining the two above inequalities, we obtain
\begin{align*}
\Big[\mathbb{E}\Vert c_\alpha\nabla f(X_1(s))-c_\alpha\nabla f(X_2(j\eta))\Vert^{q_1}\Big]^{\frac{1}{ q_1}}\leq& (1+\delta)M(\mathbb{E}\Vert X_2(0)\Vert^{\gamma q_1})^{\frac{1}{q_1}} + Mc^{\frac{1}{q_1}} + M(j+1)^{\frac{1}{q_1}}\Big(\Big(c\eta\Big(\frac{d}{\beta^{1/\alpha}}\Big)\Big)^{\frac{1}{q_1}}\\
& + (c\eta)^{\frac{1}{q_1}} + (1+\delta)(2\eta (b+m))^{\frac{\gamma}{2}} +(1+\delta)2^{\frac{\gamma}{2}}(\eta B)^{\gamma}\\
& +(1+\delta)\Big(\frac{\eta}{\beta}\Big)^{\frac{\gamma}{\alpha}}l_{\alpha,\gamma q_1,d}^{\frac{1}{q_1}}\Big)\\
=&Q'_2 + (j+1)^{\frac{1}{q_1}}P'_2(\eta).
\end{align*}
% \begin{align*}
% \leq& c^{\frac{1}{p_1}} + (\mathbb{E}\Vert X_2(0)\Vert^{(q-1)p_1})^{\frac{1}{p_1}} + \eta^{q-1}\Big(M^{q-1}(\mathbb{E}\Vert X_2(0)\Vert^{(q-1)p_1\gamma})^{\frac{1}{p_1}} + B^{(q-1)} \Big) + \Big(\frac{\eta}{\beta}\Big)^{\frac{q-1}{\alpha}}l_{\alpha,(q-1)p_1,d}^{\frac{1}{p_1}}\\
% & + (j+1)^{\frac{1}{p_1}}\Big(\Big(c\eta\Big(\frac{d}{\beta^{1/\alpha}}\Big)\Big)^{\frac{1}{p_1}} + (c\eta)^{\frac{1}{p_1}} + (2\eta (b+m))^{\frac{(q-1)}{2}} + 2^{\frac{(q-1)}{2}}(\eta B)^{(q-1)} +\Big(\frac{\eta}{\beta}\Big)^{\frac{(q-1)}{\alpha}}l_{\alpha,(q-1)p_1,d}^{\frac{1}{p_1}}\\
% &+ \eta^{q-1} M^{q-1} \Big((2\eta (b+m))^{\frac{(q-1)\gamma}{2}} + 2^{\frac{(q-1)\gamma}{2}}(\eta B)^{(q-1)\gamma} +\Big(\frac{\eta}{\beta}\Big)^{\frac{(q-1)\gamma}{\alpha}}l_{\alpha,(q-1)p_1\gamma,d}^{\frac{1}{p_1}}\Big)\Big)\\
% =&Q_1(\eta) + (j+1)^{\frac{1}{p_1}}P_1(\eta),
% \end{align*}
Finally, we have
\begin{align*}
\mathcal{W}_{q}^{q}(\mu_{1t}, \mu_{2t})\leq q\eta\Big(k^2P_1(\eta)P'_2(\eta) + k^{1+1/p_1}P_1(\eta)Q'_2 + k^{1+1/q_1}P'_2(\eta)Q_1(\eta) + kQ_1(\eta)Q'_2\Big).
\end{align*}
By considering the additional term $\delta$, we arrive at the following bound:
\begin{align*}
\mathcal{W}_{q}^{q}(\mu_{1t}, \mu_{2t})\leq C(1+\delta)(k^2\eta + k^2\eta^{1+\gamma/\alpha}\beta^{-\gamma(q-1)/\alpha}d).
\end{align*}
\end{proof}

\subsection{Proof of \Cref{cor:StochGradForX1X2}}
\begin{proof}
By \Cref{lemma:DifferenceExpectation},
\begin{align*}
c_\alpha\big\vert\mathbb{E}[f(X_1(k\eta))] - \mathbb{E}[f(X_2(k\eta))]\big\vert\leq \Big(M\Big(\mathbb{E}_{\mathbf{P}}\Vert X_1(k\eta)\Vert^{\gamma p}\Big)^{\frac{1}{p}}+M\Big(\mathbb{E}_{\mathbf{P}}\Vert X_2(k\eta)\Vert^{\gamma p}\Big)^{\frac{1}{p}}+B\Big)\mathcal{W}_{q}(\mu_{1t}, \mu_{2t}).
\end{align*}
Then, by following the same proof as in \Cref{cor:distanceFromX1toX2}, \Cref{cor:BoundWasserstein} and using \Cref{thm:StochGradForX1X2}, we get
\begin{align*}
c_\alpha\big\vert\mathbb{E}[f(X_1(k\eta))] - \mathbb{E}[f(X_2(k\eta))]\big\vert\leq& C(1+\delta)\Big(k^{1+\frac{1}{q}}\eta^{\frac{1}{q}}  +  k^{1+\frac{1}{q}}\eta^{\frac{1}{q}+\frac{\gamma}{\alpha q}}\beta^{-\frac{(q-1)\gamma}{\alpha q}}d\Big).
\end{align*}
\end{proof}

\subsection{Technical Results}

\begin{corollary}\label{cor:distanceFromX1toX2} Along with $P_1(\eta),P_2(\eta),Q_1(\eta),Q_2$ in Lemma~\ref{thm:boundOfWasserstein}, we define, in addition, the following quantities:
\begin{align*}
&P_3(\eta)\triangleq M\Big(\Big(c\eta\Big(\frac{d}{\beta^{1/\alpha}}\Big)\Big)^{\frac{1}{p}} + (c\eta)^{\frac{1}{p}} + (2\eta (b+m))^{\frac{\gamma}{2}}+2^{\frac{\gamma}{2}}(\eta B)^{\gamma} +\Big(\frac{\eta}{\beta}\Big)^{\frac{\gamma}{\alpha}}l_{\alpha,\gamma p,d}^{\frac{1}{p}}\Big)\\
&Q_3\triangleq M(\mathbb{E}\Vert X_2(0)\Vert^{\gamma p})^{\frac{1}{p}} + Mc^{\frac{1}{p}} + B.
\end{align*}
For $0<\eta<\frac{m}{M^2}$, we have the following bound:
\begin{align*}
c_\alpha\big\vert\mathbb{E}[f(X_1(k\eta))] - &\mathbb{E}[f(X_2(k\eta))]\big\vert\\
\leq&(q\eta)^{\frac{1}{q}}\Big(k^{1+\frac{1}{q}}(P_1(\eta)P_2(\eta))^{\frac{1}{q}}P_3(\eta) + k^{1+\frac{1}{qp_1}}(P_1(\eta)Q_2)^{\frac{1}{q}}P_3(\eta) + k^{1+\frac{1}{qq_1}}(P_2(\eta)Q_1(\eta))^{\frac{1}{q}}P_3(\eta)\\
& + k(Q_1(\eta)Q_2)^{\frac{1}{q}}P_3(\eta) +k^{\frac{2}{q}}(P_1(\eta)P_2(\eta))^{\frac{1}{q}}Q_3 + k^{\frac{1}{q}+\frac{1}{qp_1}}(P_1(\eta)Q_2)^{\frac{1}{q}}Q_3\\
& + k^{\frac{1}{q}+\frac{1}{qq_1}}(P_2(\eta)Q_1(\eta))^{\frac{1}{q}}Q_3 + k^{\frac{1}{q}}(Q_1(\eta)Q_2)^{\frac{1}{q}}Q_3\Big).
\end{align*}
\end{corollary}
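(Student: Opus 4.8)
The plan is to combine the transport-to-expectation bound of Lemma~\ref{lemma:DifferenceExpectation} with the Wasserstein estimate of Theorem~\ref{thm:boundOfWasserstein}. First I would apply Lemma~\ref{lemma:DifferenceExpectation} with $g=c_\alpha f$, $\mu=\mu_{1t}$, $\nu=\mu_{2t}$, $t=k\eta$. Note that \Cref{assump:boundedGradAtZero} and \Cref{assump:HolderContinuity} together give $c_\alpha\Vert\nabla f(x)\Vert\leq M\Vert x\Vert^{\gamma}+B$, so the hypothesis of the lemma holds with $c_1=M$, $c_2=B$. This yields
\begin{align*}
c_\alpha\big\vert\mathbb{E}[f(X_1(k\eta))] - \mathbb{E}[f(X_2(k\eta))]\big\vert \leq \Big(M(\mathbb{E}\Vert X_1(k\eta)\Vert^{\gamma p})^{1/p} + M(\mathbb{E}\Vert X_2(k\eta)\Vert^{\gamma p})^{1/p} + B\Big)\,\mathcal{W}_{q}(\mu_{1t},\mu_{2t}).
\end{align*}

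Next I would bound the moment prefactor. Using the moment estimate of Lemma~\ref{lemma:expectationBoundOfX1} for $X_1$ and of Lemma~\ref{lemma:expecataionBound} for $X_2$, together with the subadditivity inequality of Lemma~\ref{lemma:anUsefulIneq} (to split sums under the $1/p$ power and to absorb the additive constant inside the moment bound), I would follow verbatim the manipulation that produced $Q_2 + (j+1)^{1/q_1}P_2(\eta)$ in the proof of Theorem~\ref{thm:boundOfWasserstein}, but with the exponent $q_1$ replaced by $p$ and with the time index fixed at the endpoint $j+1=k$. This produces exactly the constants $P_3(\eta)$ and $Q_3$ as defined in the statement, giving
\begin{align*}
M(\mathbb{E}\Vert X_1(k\eta)\Vert^{\gamma p})^{1/p} + M(\mathbb{E}\Vert X_2(k\eta)\Vert^{\gamma p})^{1/p} + B \leq Q_3 + k^{1/p}P_3(\eta).
\end{align*}

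Then I would invoke Theorem~\ref{thm:boundOfWasserstein} for $\mathcal{W}_{q}^{q}(\mu_{1t},\mu_{2t})$ and take the $1/q$-th root. Since $q>1$ the map $x\mapsto x^{1/q}$ is subadditive (Lemma~\ref{lemma:anUsefulIneq}), so the four-term sum distributes under the root:
\begin{align*}
\mathcal{W}_{q}(\mu_{1t},\mu_{2t}) \leq (q\eta)^{1/q}\Big(k^{2/q}(P_1P_2)^{1/q} + k^{(1+1/p_1)/q}(P_1Q_2)^{1/q} + k^{(1+1/q_1)/q}(P_2Q_1)^{1/q} + k^{1/q}(Q_1Q_2)^{1/q}\Big),
\end{align*}
where I abbreviate $P_i=P_i(\eta)$, $Q_1=Q_1(\eta)$. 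Finally I would multiply this by the prefactor bound $Q_3 + k^{1/p}P_3(\eta)$, expand the eight resulting products, and collect the powers of $k$. The four products carrying the factor $k^{1/p}P_3$ acquire exponents $1/p+2/q$, $1/p+(1+1/p_1)/q$, $1/p+(1+1/q_1)/q$, $1/p+1/q$; using the first condition of \Cref{assump:conditionsOnParameters}, namely $1/p+1/q=1$, these collapse to $1+1/q$, $1+1/(qp_1)$, $1+1/(qq_1)$, $1$, matching the first four terms of the claim, while the four products with $Q_3$ give the remaining terms with exponents $2/q$, $1/q+1/(qp_1)$, $1/q+1/(qq_1)$, $1/q$.

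The routine but delicate part is Step~2: reproducing the $P_2/Q_2$-type bookkeeping for the prefactor, since one must verify that replacing $q_1$ by $p$ and evaluating at the single time $k\eta$ produces \emph{precisely} the constants $P_3(\eta)$ and $Q_3$ as defined, rather than merely a bound of the same order. Apart from that, the only subtleties are ensuring that the subadditivity of $x\mapsto x^{1/q}$ applies (which holds because $q>1$) and that the identity $1/p+1/q=1$ is indeed what converts the mixed $k$-exponents into the clean forms stated; both are guaranteed by \Cref{assump:conditionsOnParameters}.
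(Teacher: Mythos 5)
Your proposal is correct and follows essentially the same route as the paper's own proof: apply Lemma~\ref{lemma:DifferenceExpectation}, bound the moment prefactor by $Q_3 + k^{\frac{1}{p}}P_3(\eta)$ via the same $P_2/Q_2$-style bookkeeping with $q_1$ replaced by $p$, take the $\frac{1}{q}$-th root of the bound in Theorem~\ref{thm:boundOfWasserstein} using the subadditivity of $x\mapsto x^{1/q}$, and expand the eight products using $\frac{1}{p}+\frac{1}{q}=1$. The only cosmetic deviation is that you cite Lemma~\ref{lemma:expecataionBound} for the $X_2$ moments where the paper's text cites Lemma~\ref{lemma:gradientBound} (apparently a typo in the paper, since the displayed estimate is indeed the $X_2$ moment bound), so your version is if anything the more accurate reference.
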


\begin{proof} By Lemma~\ref{lemma:DifferenceExpectation},
\begin{align*}
c_\alpha\big\vert\mathbb{E}[f(X_1(k\eta))] - \mathbb{E}[f(X_2(k\eta))]\big\vert\leq \Big(M\Big(\mathbb{E}_{\mathbf{P}}\Vert X_1(k\eta)\Vert^{\gamma p}\Big)^{\frac{1}{p}}+M\Big(\mathbb{E}_{\mathbf{P}}\Vert X_2(k\eta)\Vert^{\gamma p}\Big)^{\frac{1}{p}}+B\Big)\mathcal{W}_{q}(\mu_{1t}, \mu_{2t}).
\end{align*}
Using Lemma~\ref{lemma:expectationBoundOfX1} and Lemma~\ref{lemma:gradientBound}, we have
\begin{align*}
\Big(M\Big(\mathbb{E}_{\mathbf{P}}\Vert X_1(k\eta)\Vert^{\gamma p}\Big)^{\frac{1}{p}}+M\Big(\mathbb{E}_{\mathbf{P}}\Vert X_2(k\eta)\Vert^{\gamma p}\Big)^{\frac{1}{p}}+B\Big)\leq& M\Big(c\Big(k\eta\Big(\frac{d}{\beta^{1/\alpha}}+1\Big)+1\Big)\Big)^{\gamma} + M\Big[(\mathbb{E}\Vert X_2(0)\Vert^{\gamma p})\\
& + k\Big((2\eta (b+m))^{\frac{\gamma p}{2}} +2^{\frac{\gamma p}{2}}(\eta B)^{\gamma p} +\Big(\frac{\eta}{\beta}\Big)^{\frac{\gamma p}{\alpha}}l_{\alpha,\gamma p,d}\Big)\Big]^{\frac{1}{p}}\\
& + B.
\end{align*}
By using Lemma~\ref{lemma:anUsefulIneq}, we obtain: $\Big(M\Big(\mathbb{E}_{\mathbf{P}}\Vert X_1(k\eta)\Vert^{\gamma p}\Big)^{\frac{1}{p}}+M\Big(\mathbb{E}_{\mathbf{P}}\Vert X_2(k\eta)\Vert^{\gamma p}\Big)^{\frac{1}{p}}+B\Big)\leq$
\begin{align*}
& M\Big(c\Big(k\eta\Big(\frac{d}{\beta^{1/\alpha}} + 1\Big)+1\Big)\Big)^{\gamma} + M(\mathbb{E}\Vert X_2(0)\Vert^{\gamma p})^{\frac{1}{p}} + Mk^{\frac{1}{p}}\Big((2\eta (b+m))^{\frac{\gamma }{2}} +2^{\frac{\gamma }{2}}(\eta B)^{\gamma } +\Big(\frac{\eta}{\beta}\Big)^{\frac{\gamma }{\alpha}}l_{\alpha,\gamma p,d}^{\frac{1}{p}}\Big) + B.
\end{align*}
We note that $\gamma<\frac{1}{p}$. Hence,
\begin{align*}
\Big(c\Big(k\eta\Big(\frac{d}{\beta^{1/\alpha}} + 1\Big)+1\Big)\Big)^{\gamma}\leq& \Big(c\Big(k\eta\Big(\frac{d}{\beta^{1/\alpha}} + 1\Big)+1\Big)\Big)^{\frac{1}{p}}\\
\leq& k^{\frac{1}{p}}\Big(c\eta\Big(\frac{d}{\beta^{1/\alpha}} + 1\Big)\Big)^{\frac{1}{p}} + c^{\frac{1}{p}},
\end{align*}
where the last inequality is an application of Lemma~\ref{lemma:anUsefulIneq}. By replacing this inequality into the previous one and rearranging the terms, we have

\begin{align*}
\Big(M\Big(\mathbb{E}_{\mathbf{P}}\Vert X_1(k\eta)\Vert^{\gamma p}\Big)^{\frac{1}{p}}+M\Big(\mathbb{E}_{\mathbf{P}}\Vert X_2(k\eta)\Vert^{\gamma p}\Big)^{\frac{1}{p}}&+B\Big)\\
\leq& M(\mathbb{E}\Vert X_2(0)\Vert^{\gamma p})^{\frac{1}{p}} + Mc^{\frac{1}{p}} + B + Mk^{\frac{1}{p}}\Big(\Big(c\eta\Big(\frac{d}{\beta^{1/\alpha}} + 1\Big)\Big)^{\frac{1}{p}} \\
& + (2\eta (b+m))^{\frac{\gamma}{2}} +2^{\frac{\gamma}{2}}(\eta B)^{\gamma} +\Big(\frac{\eta}{\beta}\Big)^{\frac{\gamma}{\alpha}}l_{\alpha,\gamma p,d}^{\frac{1}{p}}\Big)\\
\leq& M(\mathbb{E}\Vert X_2(0)\Vert^{\gamma p})^{\frac{1}{p}} + Mc^{\frac{1}{p}} + B + Mk^{\frac{1}{p}}\Big(\Big(c\eta\Big(\frac{d}{\beta^{1/\alpha}}\Big)\Big)^{\frac{1}{p}} \\
& + (c\eta)^{\frac{1}{p}} + (2\eta (b+m))^{\frac{\gamma}{2}} +2^{\frac{\gamma}{2}}(\eta B)^{\gamma} +\Big(\frac{\eta}{\beta}\Big)^{\frac{\gamma}{\alpha}}l_{\alpha,\gamma p,d}^{\frac{1}{p}}\Big)\\
=&Q_3 + k^{\frac{1}{p}}P_3(\eta).
\end{align*}
Here, we have used \Cref{lemma:anUsefulIneq} in the last inequality. Next, by Lemma~\ref{thm:boundOfWasserstein} and Lemma~\ref{lemma:anUsefulIneq},
\begin{align*}
\mathcal{W}_{q}(\mu_{1t},\mu_{2t})\leq& (q\eta)^{\frac{1}{q}}\Big(k^2P_1(\eta)P_2(\eta) + k^{1+1/p_1}P_1(\eta)Q_2 + k^{1+1/q_1}P_2(\eta)Q_1(\eta) + kQ_1(\eta)Q_2\Big)^{\frac{1}{q}}\\
\leq& (q\eta)^{\frac{1}{q}}\Big(k^{\frac{2}{q}}(P_1(\eta)P_2(\eta))^{\frac{1}{q}} + k^{\frac{1}{q}+\frac{1}{qp_1}}(P_1(\eta)Q_2)^{\frac{1}{q}} + k^{\frac{1}{q}+\frac{1}{qq_1}}(P_2(\eta)Q_1(\eta))^{\frac{1}{q}} + k^{\frac{1}{q}}(Q_1(\eta)Q_2)^{\frac{1}{q}}\Big).
\end{align*}
By combining the above two inequalities, we get
\begin{align*}
c_\alpha \big\vert\mathbb{E}&[f(X_1(k\eta))] - \mathbb{E}[f(X_2(k\eta))]\big\vert\\
\leq& (q\eta)^{\frac{1}{q}}\Big(Q_3 + k^{\frac{1}{p}}P_3(\eta)\Big)\Big(k^{\frac{2}{q}}(P_1(\eta)P_2(\eta))^{\frac{1}{q}} + k^{\frac{1}{q}+\frac{1}{qp_1}}(P_1(\eta)Q_2)^{\frac{1}{q}} + k^{\frac{1}{q}+\frac{1}{qq_1}}(P_2(\eta)Q_1(\eta))^{\frac{1}{q}} + k^{\frac{1}{q}}(Q_1(\eta)Q_2)^{\frac{1}{q}}\Big)\\
=& (q\eta)^{\frac{1}{q}}\Big(k^{1+\frac{1}{q}}(P_1(\eta)P_2(\eta))^{\frac{1}{q}}P_3(\eta) + k^{1+\frac{1}{qp_1}}(P_1(\eta)Q_2)^{\frac{1}{q}}P_3(\eta) + k^{1+\frac{1}{qq_1}}(P_2(\eta)Q_1(\eta))^{\frac{1}{q}}P_3(\eta)\\
& + k(Q_1(\eta)Q_2)^{\frac{1}{q}}P_3(\eta) +k^{\frac{2}{q}}(P_1(\eta)P_2(\eta))^{\frac{1}{q}}Q_3 + k^{\frac{1}{q}+\frac{1}{qp_1}}(P_1(\eta)Q_2)^{\frac{1}{q}}Q_3 + k^{\frac{1}{q}+\frac{1}{qq_1}}(P_2(\eta)Q_1(\eta))^{\frac{1}{q}}Q_3\\
& + k^{\frac{1}{q}}(Q_1(\eta)Q_2)^{\frac{1}{q}}Q_3\Big).
\end{align*}

\end{proof}

The following lemma is an extension of Lemma 1.2.3 in \cite{nesterov2013introductory} to functions with H\"{o}lder continuous gradients.

\begin{lemma}\label{lemma:smoothnessInequality}
Under Assumption~\Cref{assump:HolderContinuity}, the following inequality holds for any $x,y\in\mathbb{R}^d$:
\begin{align*}
c_\alpha\vert f(x)-f(y)-\langle\nabla f(y),x-y\rangle\vert\leq\frac{M}{1+\gamma}\Vert x-y\Vert^{1+\gamma}.
\end{align*}
\end{lemma}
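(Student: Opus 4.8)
The plan is to reduce everything to the fundamental theorem of calculus applied along the line segment joining $y$ to $x$, and then to invoke the H\"older continuity hypothesis of Assumption~\Cref{assump:HolderContinuity} pointwise along that segment. Concretely, I would first write
\begin{align*}
f(x)-f(y) = \int_0^1 \langle \nabla f(y+t(x-y)), x-y\rangle\,\mathrm{d}t,
\end{align*}
which holds since $f\in C^1$, and then subtract the linear term $\langle\nabla f(y),x-y\rangle=\int_0^1\langle\nabla f(y),x-y\rangle\,\mathrm{d}t$ to obtain the integral representation of the remainder,
\begin{align*}
f(x)-f(y)-\langle\nabla f(y),x-y\rangle = \int_0^1 \langle \nabla f(y+t(x-y)) - \nabla f(y), x-y\rangle\,\mathrm{d}t.
\end{align*}

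Next I would take absolute values inside the integral, apply the Cauchy--Schwarz inequality to each integrand to bound $|\langle \nabla f(y+t(x-y)) - \nabla f(y), x-y\rangle|$ by $\Vert \nabla f(y+t(x-y)) - \nabla f(y)\Vert\,\Vert x-y\Vert$, and multiply through by $c_\alpha$. At this point Assumption~\Cref{assump:HolderContinuity} gives the key pointwise estimate: with the two arguments being $y+t(x-y)$ and $y$, their difference is $t(x-y)$, so
\begin{align*}
c_\alpha\Vert \nabla f(y+t(x-y)) - \nabla f(y)\Vert \leq M\Vert t(x-y)\Vert^{\gamma} = M\,t^{\gamma}\Vert x-y\Vert^{\gamma}.
\end{align*}

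Combining these, the remainder is bounded by $M\Vert x-y\Vert^{1+\gamma}\int_0^1 t^{\gamma}\,\mathrm{d}t$, and evaluating the elementary integral $\int_0^1 t^{\gamma}\,\mathrm{d}t = 1/(1+\gamma)$ (finite since $\gamma\geq 0$) yields exactly the claimed bound $\frac{M}{1+\gamma}\Vert x-y\Vert^{1+\gamma}$. There is no serious obstacle here; this is the standard descent-lemma argument of Lemma~1.2.3 in \cite{nesterov2013introductory}, with the only modification being that the Lipschitz estimate $\Vert\nabla f(y+t(x-y))-\nabla f(y)\Vert\le L t\Vert x-y\Vert$ is replaced by its H\"older analogue carrying the factor $t^\gamma$, which changes the integral from $\int_0^1 t\,\mathrm{d}t=1/2$ to $\int_0^1 t^\gamma\,\mathrm{d}t=1/(1+\gamma)$. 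The only points deserving a word of care are that $f\in C^1$ guarantees the integral representation is valid and that the factor $c_\alpha$ is carried consistently through the Cauchy--Schwarz step so that Assumption~\Cref{assump:HolderContinuity} can be applied verbatim.
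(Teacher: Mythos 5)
Your proposal is correct and follows essentially the same route as the paper's proof: the fundamental theorem of calculus along the segment from $y$ to $x$ (the paper phrases this via the auxiliary function $g(t)=c_\alpha f(y+t(x-y))$, a purely notational difference), then Cauchy--Schwarz, the H\"older estimate from Assumption~\Cref{assump:HolderContinuity} with the factor $t^\gamma$, and the evaluation $\int_0^1 t^\gamma\,\mathrm{d}t = 1/(1+\gamma)$. Nothing is missing; the $c_\alpha$ factor is handled consistently, just as in the paper.
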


\begin{proof}
Let $g(t)\triangleq c_\alpha f(y+t(x-y))$. Then, $g'(t)=c_\alpha\langle \nabla f(y+t(x-y)),x-y\rangle$ and $\int_0^1g'(t)\text{d}t=g(1)-g(0)=c_\alpha(f(x)-f(y))$. We have
\begin{align*}
c_\alpha\vert f(x)-f(y)-\langle\nabla f(y),x-y\rangle\vert=&\Big\vert\int_0^1g'(t)\text{d}t - c_\alpha\langle\nabla f(y),x-y\rangle\Big\vert\\
=&\Big\vert\int_0^1 c_\alpha\langle \nabla f(y+t(x-y)),x-y\rangle\text{d}t - c_\alpha\langle\nabla f(y),x-y\rangle\Big\vert\\
=&\Big\vert\int_0^1 c_\alpha\langle \nabla f(y+t(x-y))- \nabla f(y),x-y\rangle\text{d}t\Big\vert.
\end{align*}
By Cauchy-Schwarz inequality and Assumption~\Cref{assump:HolderContinuity}, we have
\begin{align*}
c_\alpha\vert f(x)-f(y)-\langle\nabla f(y),x-y\rangle\vert\leq&\int_0^1 c_\alpha\Vert \nabla f(y+t(x-y))- \nabla f(y)\Vert\Vert x-y\Vert\text{d}t\\
\leq&\int_0^1 Mt^{\gamma}\Vert x-y\Vert^{\gamma}\Vert x-y\Vert\text{d}t\\
=&\frac{M}{1+\gamma}\Vert x-y\Vert^{1+\gamma}.
\end{align*}
\end{proof}

\begin{lemma}\label{lemma:lowerBoundForNormalizedInvariant} The normalized factor of $\pi$ is bounded below, i. e.,
\begin{align*}
\log\int_{\mathbb{R}^d}\exp(-\beta f(w))\text{d}w\geq-\beta f(w^*)-\frac{\beta^{-\gamma} Mc_\alpha^{-1}}{1+\gamma} + \log\Big(\frac{\boldsymbol{\pi}^{d/2}\beta^{-d}}{\Gamma(d/2+1)}\Big).
\end{align*}
\end{lemma}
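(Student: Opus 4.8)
The plan is to lower-bound the partition function $Z \triangleq \int_{\mathbb{R}^d}\exp(-\beta f(w))\,\mathrm{d}w$ by restricting the integration to a small Euclidean ball centred at the global minimizer $w^*$ and controlling the growth of $f$ on that ball through the H\"{o}lder-smoothness of its gradient. The global minimizer exists and satisfies $\nabla f(w^*)=0$ because \Cref{assump:dissipative} makes $f$ coercive (so the infimum is attained) and $f$ is $C^1$ by \Cref{assump:HolderContinuity}.

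First I would apply \Cref{lemma:smoothnessInequality} with $y=w^*$. Since $\nabla f(w^*)=0$, the inner-product term vanishes and the inequality collapses to a pointwise upper bound on $f$:
\begin{align*}
f(w)\leq f(w^*)+\frac{Mc_\alpha^{-1}}{1+\gamma}\Vert w-w^*\Vert^{1+\gamma},\qquad\forall w\in\mathbb{R}^d.
\end{align*}
Exponentiating and reversing the inequality yields the lower bound $\exp(-\beta f(w))\geq \exp(-\beta f(w^*))\exp\bigl(-\beta\tfrac{Mc_\alpha^{-1}}{1+\gamma}\Vert w-w^*\Vert^{1+\gamma}\bigr)$ for every $w$.

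Next I would discard the integration outside a ball $B(w^*,r)$ of radius $r>0$, which only decreases the integral since the integrand is nonnegative. On $B(w^*,r)$ we have $\Vert w-w^*\Vert^{1+\gamma}\leq r^{1+\gamma}$, so the exponential factor is bounded below by a constant; pulling it out of the integral leaves the volume of the ball, $\mathrm{Vol}(B(w^*,r))=\boldsymbol{\pi}^{d/2}r^d/\Gamma(d/2+1)$. Taking logarithms, this gives
\begin{align*}
\log Z\geq -\beta f(w^*)-\beta\frac{Mc_\alpha^{-1}}{1+\gamma}r^{1+\gamma}+\log\Bigl(\frac{\boldsymbol{\pi}^{d/2}r^d}{\Gamma(d/2+1)}\Bigr).
\end{align*}

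The final step is merely to fix the radius: choosing $r=\beta^{-1}$ turns $\beta\, r^{1+\gamma}$ into $\beta^{-\gamma}$ and $r^d$ into $\beta^{-d}$, reproducing exactly the two $\beta$-dependent terms on the right-hand side of the claim. There is no genuine analytic obstacle here; the only points that require care are justifying $\nabla f(w^*)=0$ (coercivity from \Cref{assump:dissipative} guarantees the minimum is attained, and $C^1$-smoothness then forces the gradient to vanish there) and invoking the closed form for the volume of a Euclidean ball. A more careful optimization over $r$ would only improve the constant, but $r=\beta^{-1}$ is precisely the choice that matches the stated bound.
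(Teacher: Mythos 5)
Your proposal is correct and follows essentially the same route as the paper's own proof: both apply \Cref{lemma:smoothnessInequality} at the minimizer $w^*$ (where $\nabla f(w^*)=0$) to get the pointwise bound $f(w)\leq f(w^*)+\frac{Mc_\alpha^{-1}}{1+\gamma}\Vert w-w^*\Vert^{1+\gamma}$, then lower-bound the partition function by restricting the integral to a ball of radius $\beta^{-1}$, bounding the exponent by its worst-case value on that ball, and inserting the ball-volume formula $\boldsymbol{\pi}^{d/2}\beta^{-d}/\Gamma(d/2+1)$. Your only additions --- parametrizing the radius as $r$ before fixing $r=\beta^{-1}$, and explicitly justifying that $w^*$ exists and is a critical point via coercivity from \Cref{assump:dissipative} --- are cosmetic refinements of the same argument.
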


\begin{proof} We start by writing:
\begin{align*}
\log\int_{\mathbb{R}^d}\exp(-\beta f(w))\text{d}w=&-\beta f(w^*)+\log\int_{\mathbb{R}^d}\exp\big(-\beta(f(w)-f(w^*))\big)\text{d}w\\
\geq& -\beta f(w^*)+\log\int_{\mathbb{R}^d}\exp\Big(-\frac{\beta Mc_\alpha^{-1}}{1+\gamma}\Vert w-w^*\Vert^{1+\gamma}\Big)\text{d}w.
\end{align*}
Here, we used Lemma~\ref{lemma:smoothnessInequality}, with $\nabla f(w^*)=0$. For the second term on the right hand side, we have
\begin{align*}
\int_{\mathbb{R}^d}\exp\Big(-\frac{\beta Mc_\alpha^{-1}}{1+\gamma}\Vert w-w^*\Vert^{1+\gamma}\Big)\text{d}w=&\int_{\Vert w\Vert\leq\beta^{-1}}\exp\Big(-\frac{\beta Mc_\alpha^{-1}}{1+\gamma}\Vert w\Vert^{1+\gamma}\Big)\text{d}w  \\
&+ \int_{\Vert w\Vert\geq\beta^{-1}}\exp\Big(-\frac{\beta Mc_\alpha^{-1}}{1+\gamma}\Vert w\Vert^{1+\gamma}\Big)\text{d}w\\
\geq& \int_{\Vert w\Vert\leq\beta^{-1}}\exp\Big(-\frac{\beta Mc_\alpha^{-1}}{1+\gamma}\beta^{-1-\gamma}\Big)\text{d}w + 0\\
% \int_{\Vert w\Vert\geq\beta^{-1}}\exp\Big(-\frac{\beta M}{1+\gamma}\beta^{1-\gamma}\Vert w\Vert^{2}\Big)\text{d}w\\
=& \exp\Big(-\frac{\beta^{-\gamma} Mc_\alpha^{-1}}{1+\gamma}\Big)\int_{\Vert w\Vert\leq\beta^{-1}}1\text{d}w\\
=&\exp\Big(-\frac{\beta^{-\gamma} Mc_\alpha^{-1}}{1+\gamma}\Big)\frac{\boldsymbol{\pi}^{d/2}\beta^{-d}}{\Gamma(d/2+1)},
\end{align*}
% where, in the second line, we used $\Vert \beta w\Vert^{1+\gamma}\leq\Vert \beta w\Vert^{2}$ for $\Vert w\Vert\geq\beta^{-1}$.
where, $\Gamma$ denotes the Gamma function and $\boldsymbol{\pi}$ denotes  Archimedes' constant (here, it is not the invariant distribution). Hence,
\begin{align*}
\log\int_{\mathbb{R}^d}\exp\Big(-\frac{\beta Mc_\alpha^{-1}}{1+\gamma}\Vert w-w^*\Vert^{1+\gamma}\Big)\text{d}w\geq&-\frac{\beta^{-\gamma} Mc_\alpha^{-1}}{1+\gamma} + \log\Big(\frac{\boldsymbol{\pi}^{d/2}\beta^{-d}}{\Gamma(d/2+1)}\Big).
\end{align*}
By combining the above inequalities, we have the desired result.
\end{proof}

\begin{lemma}\label{lemma:expectationBoundOfX1}
For $\lambda\in(0,1)$, there exists a constant $c$ depending on $m,b,\alpha$, such that
\begin{align*}
\mathbb{E}\Big( \Vert X_1(t)\Vert^{\lambda}\Big)^{\frac{1}{\lambda}}\leq c\Big(t(d\beta^{-1/\alpha}+1)+1\Big),&& \forall t>0, \beta\geq1, 1<\alpha<2.
\end{align*}
\end{lemma}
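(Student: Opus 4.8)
The plan is to reduce the $\lambda$-moment to the first moment and then run a Lyapunov/Dynkin estimate on the SDE \eqref{sde:cont_euler}. Since $0<\lambda<1$ and $t\mapsto t^{\lambda}$ is concave, Jensen's inequality yields $(\mathbb{E}\Vert X_1(t)\Vert^{\lambda})^{1/\lambda}\le\mathbb{E}\Vert X_1(t)\Vert$, so it is enough to prove the claimed bound for $\mathbb{E}\Vert X_1(t)\Vert$. It is worth stressing that this is precisely where the assumption $1<\alpha$ enters: a symmetric $\alpha$-stable variable has a finite first moment iff $\alpha>1$, so $\mathbb{E}\Vert X_1(t)\Vert<\infty$ and the reduction is meaningful. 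Moreover, estimating at the level of the first moment (which will turn out to grow linearly in $t$) and only then descending to the exponent $\lambda$ via Jensen is exactly what preserves the linear rate; bounding $\mathbb{E}\Vert X_1(t)\Vert^{\lambda}$ directly by a linear function of $t$ and raising to the power $1/\lambda$ would instead inflate the rate to $t^{1/\lambda}$.

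To bound $\mathbb{E}\Vert X_1(t)\Vert$ I would apply Dynkin's formula to the smooth Lyapunov function $V(x)\triangleq(1+\Vert x\Vert^{2})^{1/2}$, which satisfies $\Vert x\Vert\le V(x)\le 1+\Vert x\Vert$. Denoting by $\mathcal{A}$ the generator of \eqref{sde:cont_euler}, I split $\mathcal{A}V$ into the drift part $\langle b_1(x),\nabla V(x)\rangle$ and the nonlocal part $-\beta^{-1}\sum_{i=1}^{d}\mathcal{D}_{x_i}^{\alpha}V(x)$ coming from $\beta^{-1/\alpha}L^{\alpha}$. Since $\nabla V(x)=x/(1+\Vert x\Vert^{2})^{1/2}$, the dissipativity assumption \Cref{assump:dissipative} gives $\langle b_1(x),\nabla V(x)\rangle=\langle b_1(x),x\rangle/(1+\Vert x\Vert^{2})^{1/2}\le (b-m\Vert x\Vert^{1+\gamma})/(1+\Vert x\Vert^{2})^{1/2}\le b$; this is the step that genuinely uses \Cref{assump:dissipative}, since otherwise the drift could be expansive. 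For the nonlocal part the key point is that $V$ is $C^{2}$ with bounded second derivatives and only linear growth, so each one-dimensional Riesz derivative $\mathcal{D}_{x_i}^{\alpha}V$ is bounded uniformly in $x$ by a constant $K_{\alpha}$: in the singular-integral representation the integrand is $O(\vert y\vert^{1-\alpha})$ near $y=0$ (integrable because $\alpha<2$) and $O(\vert y\vert^{-\alpha})$ at infinity (integrable because $\alpha>1$). Hence $\mathcal{A}V(x)\le b+\beta^{-1}d\,K_{\alpha}$.

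Dynkin's formula then gives $\mathbb{E}V(X_1(t))\le V(X_1(0))+(b+\beta^{-1}d\,K_{\alpha})\,t$. Using $\beta\ge1$ (so that $\beta^{-1}\le\beta^{-1/\alpha}$), $\Vert x\Vert\le V(x)$, and absorbing the constant $V(X_1(0))$, a rearrangement via the elementary power inequalities of \Cref{lemma:anUsefulIneq} yields $\mathbb{E}\Vert X_1(t)\Vert\le c\,(t(d\beta^{-1/\alpha}+1)+1)$ with $c$ depending only on $m,b,\alpha$, and together with the Jensen reduction of the first paragraph this is the desired estimate. The main obstacles are the two analytic technicalities rather than the overall strategy: first, the uniform-in-$x$ bound on $\mathcal{D}_{x_i}^{\alpha}V$, where one must control both the local and the tail parts of the singular integral and track exactly how $\alpha\in(1,2)$ makes both finite; and second, the rigorous justification of Dynkin's formula for the unbounded $V$, which I would handle by localizing with the exit times $\tau_n=\inf\{t:\Vert X_1(t)\Vert\ge n\}$ and passing to the limit with Fatou's lemma, an argument that simultaneously delivers the a priori finiteness of $\mathbb{E}\Vert X_1(t)\Vert$.
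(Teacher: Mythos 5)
Your proof is correct, but it takes a genuinely different route from the paper's. The paper applies It\^{o}'s formula to the same Lyapunov function $h(x)=(1+\Vert x\Vert^2)^{1/2}$, bounds the drift term via \Cref{assump:dissipative} and the jump term via an explicit L\'{e}vy-measure computation (Lemma~\ref{lemma:levyMeasure}, which is essentially your uniform bound on $\mathcal{D}^{\alpha}_{x_i}V$ in different clothing, directly producing the $Cd\beta^{-1/\alpha}$ factor that you reach by the slightly different bookkeeping $\beta^{-1}dK_\alpha\leq\beta^{-1/\alpha}dK_\alpha$ for $\beta\geq1$), and then—this is the real divergence—invokes Lemma~3.8 of \cite{xie2017ergodicity}, a fractional-moment maximal inequality, to bound $\mathbb{E}\big(\sup_{s\in[0,t]}h(X_1(s))^{\lambda}\big)$ for $\lambda\in(0,1)$ directly, thereby sidestepping all first-moment and martingale-integrability questions. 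You instead reduce to the first moment by Jensen (legitimately exploiting $\alpha>1$) and close the argument with a localized Dynkin formula plus Fatou; as you note, the Lipschitz bound $\vert V(x+y)-V(x)\vert\leq\Vert y\Vert$ together with $\int_{\Vert y\Vert\geq1}\Vert y\Vert\,\nu(\mathrm{d}y)<\infty$ for $\alpha>1$ is what makes the compensated jump part genuinely integrable, so the localization goes through without circularity. The trade-off: the paper's route yields the stronger running-supremum bound and would in principle survive even when first moments fail (i.e., it does not lean on $\alpha>1$ at this step, only on $\lambda<\alpha$), at the price of importing an external maximal lemma whose constant depends on $\lambda$; your route is self-contained and elementary, gives a constant independent of $\lambda$, but delivers only the fixed-time estimate—which is all that Lemma~\ref{lemma:expectationBoundOfX1} asserts and all that its downstream uses (Theorem~\ref{thm:boundOfWassersteinShort}, Theorem~\ref{thm:WassersteinOfX1X3}) require.
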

\begin{proof}
We follow exactly the same proof as Lemma 7.1 in \cite{xie2017ergodicity}, with some modifications. Let $h(x)\triangleq(1+\Vert x\Vert^2)^{1/2}$. By It\^{o}'s formula, we have $\text{d}h(X_1(t))=\Bigg(\langle b_1(X_1(t)),\nabla h(X_1(t))\rangle$
\begin{align}\label{eqn:ItoFormulaForH}
 + \int_{\mathbb{R}^d}\Big(h(X_1(t)+\beta^{-1/\alpha} x) - h(X_1(t)) - \mathbb{I}_{\Vert x\Vert<1}\langle\beta^{-1/\alpha} x,\nabla h(X_1(t))\rangle\Big)\nu(\text{d}x)\Bigg)\text{d}t + \text{d}M(t),
\end{align}
where $M(t)$ is a local martingale. Noticing that $\partial_ih(x)=x_i(1+\Vert x\Vert^2)^{-1/2}/2$ and using Assumption~\Cref{assump:dissipative}, we have
\begin{align*}
\langle b_1(x),\nabla h(x)\rangle =&\langle b_1(x),x\rangle (1+\Vert x\Vert^2)^{-1/2}/2\\
\leq& (-m\Vert x\Vert^{1+\gamma}+b)(1+\Vert x\Vert^2)^{-1/2}/2\\
=&(-m(\Vert x\Vert^{1+\gamma}+1)+m+b)(1+\Vert x\Vert^2)^{-1/2}/2.
\end{align*}
Since $(\Vert x\Vert^{2}+1)^{(1+\gamma)/2}\leq(\Vert x\Vert^{1+\gamma}+1)$ by Lemma~\ref{lemma:anUsefulIneq}, it follows that
\begin{align*}
\langle b_1(x),\nabla h(x)\rangle\leq&(-m(\Vert x\Vert^{2}+1)^{(1+\gamma)/2}+m+b)(1+\Vert x\Vert^2)^{-1/2}/2\\
=&(-m(\Vert x\Vert^{2}+1)^{\gamma/2}+(m+b)(1+\Vert x\Vert^2)^{-1/2})/2\\
\leq&(-m(\Vert x\Vert^{2}+1)^{\gamma/2}+m+b)/2\\
=&(-mh(x)^{\gamma}+m+b)/2.
\end{align*}
On the other hand, observing that
\begin{align*}
\vert h(x+y)-h(x)\vert\leq\Vert y\Vert\int_0^1\Vert\nabla h(x+sy)\Vert\text{d}s\leq\Vert y\Vert/2,
\end{align*}
and
\begin{align*}
h(x+y) - h(x) - \langle y,\nabla h(x)\rangle\leq\Vert y\Vert^2/2,
\end{align*}
we have
\begin{align*}
\int_{\mathbb{R}^d}\Big(h(X_1(t)+x) - h(X_1(t)) - \mathbb{I}_{\Vert x\Vert<1}\langle x,\nabla h(X_1(t))\rangle\Big)&\nu(\text{d}x)\\
\leq&\frac{1}{2\beta^{2/\alpha}}\int_{\Vert x\Vert<1}\Vert x\Vert^2\nu(\text{d}x) + \frac{1}{2\beta^{1/\alpha}}\int_{\Vert x\Vert\geq1}\Vert x\Vert\nu(\text{d}x)\\
\leq&C\frac{d}{\beta^{1/\alpha}},
\end{align*}
where the last inequality is due to Lemma~\ref{lemma:levyMeasure}. By integrating \eqref{eqn:ItoFormulaForH} and combining the above inequalities, we have
\begin{align*}
h(X_1(t))-h(X_1(0))\leq& \int_0^t\Big((-mh(X_1(s))^{\gamma}+m+b)/2 + C\frac{d}{\beta^{1/\alpha}}\Big)\text{d}s + M(t)\\
\leq& \int_0^t\Big((m+b)/2 + C\frac{d}{\beta^{1/\alpha}}\Big)\text{d}s + M(t).
\end{align*}
By Lemma 3.8 in \cite{xie2017ergodicity}, for $\lambda\in(0,1)$,
\begin{align*}
\mathbb{E}\Big(\sup_{s\in[0,t]}h(X_1(s))^{\lambda}\Big)\leq c_\lambda\Big(\mathbb{E}h(X_1(0)) + ((m+b)/2 + C\frac{d}{\beta^{1/\alpha}})t\Big)^{\lambda}.
\end{align*}
This leads to the conclusion since $h(x)\geq\Vert x\Vert$.
\end{proof}

\begin{lemma}\label{lemma:expectationBoundOfX3}
For $\lambda\in(0,1)$, there exists a constant $c_b$ depending on $L,m,b,\alpha$, such that
\begin{align*}
\mathbb{E}\Big( \Vert X_3(t)\Vert^{\lambda}\Big)^{\frac{1}{\lambda}}\leq c_b\Big(t(d\beta^{-1/\alpha}+1)+1\Big),&& \forall t>0,\beta\geq1, 1<\alpha<2.
\end{align*}
\end{lemma}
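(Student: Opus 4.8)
The plan is to mirror the proof of Lemma~\ref{lemma:expectationBoundOfX1} essentially line by line, since $X_3$ solves an SDE with the \emph{same} driving noise $\beta^{-1/\alpha}\mathrm{d}L^{\alpha}(t)$ as $X_1$ and differs only in its drift. Concretely, I would apply It\^{o}'s formula to the Lyapunov function $h(x)\triangleq(1+\Vert x\Vert^2)^{1/2}$ exactly as in \eqref{eqn:ItoFormulaForH}, now with $b_1$ replaced by $b$. The jump part of the generator, namely the integral against the L\'{e}vy measure $\nu$, is identical to the $X_1$ computation because it involves only the noise coefficient $\beta^{-1/\alpha}$ and not the drift; hence the same estimates (using $\vert h(x+y)-h(x)\vert\leq\Vert y\Vert/2$, the second-order bound, and Lemma~\ref{lemma:levyMeasure}) give a contribution bounded by $Cd\beta^{-1/\alpha}$.

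The only genuinely new step is to establish a dissipativity-type upper bound for the drift term $\langle b(x,\alpha),\nabla h(x)\rangle$, because $b$ is defined through intractable Riesz derivatives in \eqref{eqn:btrue} and cannot be differentiated explicitly. Here I would invoke Assumption~\Cref{assump:uniformlyBounded}: writing $b(x,\alpha)=-c_\alpha\nabla f(x)+R(x)$ with $\Vert R(x)\Vert=\Vert c_\alpha\nabla f(x)+b(x,\alpha)\Vert\leq L$, I combine the dissipativity of $f$ (Assumption~\Cref{assump:dissipative}) on the $-c_\alpha\nabla f$ part with Cauchy--Schwarz on the remainder to obtain
\begin{align*}
\langle b(x,\alpha),x\rangle = -c_\alpha\langle\nabla f(x),x\rangle + \langle R(x),x\rangle \leq -m\Vert x\Vert^{1+\gamma}+b+L\Vert x\Vert.
\end{align*}
Since $\Vert x\Vert(1+\Vert x\Vert^2)^{-1/2}\leq 1$, the extra term $L\Vert x\Vert$ contributes only a constant after multiplication by $\nabla h(x)$. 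Following the same algebra as in the $X_1$ proof (rewriting $-m\Vert x\Vert^{1+\gamma}+b=-m(\Vert x\Vert^{1+\gamma}+1)+m+b$ and using $(1+\Vert x\Vert^2)^{(1+\gamma)/2}\leq\Vert x\Vert^{1+\gamma}+1$ via Lemma~\ref{lemma:anUsefulIneq}), this yields
\begin{align*}
\langle b(x,\alpha),\nabla h(x)\rangle\leq\frac{-mh(x)^{\gamma}+m+b+L}{2}\leq\frac{m+b+L}{2},
\end{align*}
a constant, which is the exact analogue of the bound $(m+b)/2$ obtained for $b_1$.

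With these two estimates in hand, the remainder is routine and identical to Lemma~\ref{lemma:expectationBoundOfX1}: integrating the It\^{o} formula and dropping the nonpositive $-mh(X_3(s))^{\gamma}$ term gives $h(X_3(t))-h(X_3(0))\leq\int_0^t\big((m+b+L)/2+Cd\beta^{-1/\alpha}\big)\mathrm{d}s+M(t)$ for a local martingale $M$, and then Lemma~3.8 of \cite{xie2017ergodicity} converts this into the fractional-moment bound $\mathbb{E}\big(\sup_{s\in[0,t]}h(X_3(s))^{\lambda}\big)\leq c_\lambda\big(\mathbb{E}h(X_3(0))+((m+b+L)/2+Cd\beta^{-1/\alpha})t\big)^{\lambda}$ for $\lambda\in(0,1)$. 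Since $h(x)\geq\Vert x\Vert$ and $h(x)\geq 1$, reorganizing this affine-in-$t$ estimate produces a constant $c_b$ depending on $L,m,b,\alpha$ of the claimed form $c_b\big(t(d\beta^{-1/\alpha}+1)+1\big)$. The main (and essentially only) obstacle is the drift estimate; once Assumption~\Cref{assump:uniformlyBounded} is used to reduce $b$ to the dissipative $-c_\alpha\nabla f$ plus a uniformly bounded perturbation, the argument is a direct transcription of the $X_1$ case.
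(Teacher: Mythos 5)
Your proposal is correct and matches the paper's intent exactly: the paper's proof of this lemma is literally ``similar to the proof of Lemma~\ref{lemma:expectationBoundOfX1},'' and the drift estimate you supply via Assumption~\Cref{assump:uniformlyBounded} and \Cref{assump:dissipative} is the same decomposition the paper records in its unnumbered technical lemma bounding $\langle w,b(w)\rangle\leq (L-m)\Vert w\Vert^{1+\gamma}+(b+L)$ (where $L<m$ keeps the leading term nonpositive, just as your constant $(m+b+L)/2$ does). Your observation that the jump part of the generator is untouched because only the drift changes is precisely why the paper could omit the details, so your write-up is a faithful expansion of the intended argument.
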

\begin{proof}
The proof is similar to the proof of Lemma~\ref{lemma:expectationBoundOfX1}.
\end{proof}

\begin{lemma}\label{lemma:momentsOfStableDist}
Let $X$ be a scalar symmetric $\alpha$-stable distribution with $\alpha<2$, i. e. $X\sim \sas(1)$ (see Definition~\ref{def:StableDistribution}), then, for $-1<\lambda<\alpha$,
\begin{align*}
\mathbb{E}(\vert X\vert^{\lambda}) = \frac{2^{\lambda}\Gamma((1+\lambda)/2)\Gamma(1-\lambda/\alpha)}{\Gamma(1/2)\Gamma(1-\lambda/2)}.
\end{align*}
\end{lemma}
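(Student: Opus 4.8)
The plan is to reduce the computation to two independent and classical moment calculations by exploiting the Gaussian scale-mixture (subordination) representation of $\sas(1)$, which is available precisely because $\alpha<2$. Concretely, I would introduce a positive $(\alpha/2)$-stable random variable $A$ with Laplace transform $\mathbb{E}[e^{-uA}]=e^{-u^{\alpha/2}}$ ($u\geq0$), independent of $Z\sim\mathcal{N}(0,2)$, and claim that $X\stackrel{d}{=}\sqrt{A}\,Z$. This is verified by conditioning: given $A$, the variable $\sqrt{A}\,Z$ is centered Gaussian with variance $2A$, so $\mathbb{E}[e^{i\omega X}\mid A]=e^{-A\omega^2}$, and averaging over $A$ gives $\mathbb{E}[e^{i\omega X}]=e^{-(\omega^2)^{\alpha/2}}=e^{-|\omega|^\alpha}$, which is exactly the characteristic function of $\sas(1)$ from Definition~\ref{def:StableDistribution}.

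By independence, $\mathbb{E}[|X|^\lambda]=\mathbb{E}[A^{\lambda/2}]\,\mathbb{E}[|Z|^\lambda]$, so it suffices to evaluate each factor. For the Gaussian factor I would write $Z=\sqrt2\,N$ with $N\sim\mathcal{N}(0,1)$ and compute $\int_0^\infty x^\lambda e^{-x^2/2}\,\mathrm{d}x$ through the substitution $u=x^2/2$, obtaining the standard value $\mathbb{E}[|Z|^\lambda]=2^\lambda\Gamma((1+\lambda)/2)/\Gamma(1/2)$; this integral converges exactly when $\lambda>-1$, yielding the lower constraint in the statement. For the stable factor I would recover $\mathbb{E}[A^{\lambda/2}]$ from the Laplace transform: for $0<\lambda<2$ via the elementary identity $a^{\lambda/2}=\tfrac{\lambda/2}{\Gamma(1-\lambda/2)}\int_0^\infty t^{-1-\lambda/2}(1-e^{-ta})\,\mathrm{d}t$, and for $\lambda\leq0$ via $a^{\lambda/2}=\Gamma(-\lambda/2)^{-1}\int_0^\infty t^{-\lambda/2-1}e^{-ta}\,\mathrm{d}t$. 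Substituting $a=A$, taking expectations to bring in $\mathbb{E}[e^{-tA}]=e^{-t^{\alpha/2}}$, and reducing the resulting integral to a Gamma function via $v=t^{\alpha/2}$ gives $\mathbb{E}[A^{\lambda/2}]=\Gamma(1-\lambda/\alpha)/\Gamma(1-\lambda/2)$, finite exactly when $\lambda<\alpha$, which is the upper constraint.

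Multiplying the two factors assembles the numerators $2^\lambda\Gamma((1+\lambda)/2)$ and $\Gamma(1-\lambda/\alpha)$ over the denominators $\Gamma(1/2)$ and $\Gamma(1-\lambda/2)$, producing exactly the claimed expression on the intersection of ranges $-1<\lambda<\alpha$. I expect the main obstacle to be the stable-moment step: justifying the mixture representation for every $\alpha\in(1,2)$, validating the interchange of expectation with the improper $t$-integral (via dominated convergence after splitting at $t=1$), and correctly handling the negative-$\lambda$ regime, where the alternative integral representation of $a^{\lambda/2}$ must be used. As a fully self-contained alternative that avoids any probabilistic representation, one can instead write $|x|^\lambda$ directly as $C(\lambda)^{-1}\int_0^\infty \omega^{-1-\lambda}(1-\cos\omega x)\,\mathrm{d}\omega$ for $0<\lambda<2$ (and through $\int_0^\infty\omega^{-1-\lambda}\cos(\omega x)\,\mathrm{d}\omega$ for $-1<\lambda<0$), take expectations to introduce $\mathrm{Re}\,\mathbb{E}[e^{i\omega X}]=e^{-\omega^\alpha}$, evaluate by $u=\omega^\alpha$, and then convert the emerging constant $\Gamma(1-\lambda)\cos(\pi\lambda/2)$ into the stated Gamma form using the reflection and duplication identities; this route needs more delicate constant-matching but is elementary.
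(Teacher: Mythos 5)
Your proposal is correct, and it takes a genuinely different route from the paper: the paper does not prove this lemma at all, but simply cites Theorem 3 of Shanbhag--Sreehari (1977) (see also equation (13) of Matsui--Pawlas), whereas you give a self-contained derivation. Your subordination argument is sound: conditioning on the positive $(\alpha/2)$-stable mixing variable $A$ gives $\mathbb{E}[e^{i\omega\sqrt{A}Z}\mid A]=e^{-A\omega^2}$ and hence the characteristic function $e^{-|\omega|^\alpha}$, matching Definition~\ref{def:StableDistribution} with $\sigma=1$; the Gaussian factor $\mathbb{E}[|Z|^\lambda]=2^{\lambda}\Gamma((1+\lambda)/2)/\Gamma(1/2)$ is the standard computation and pins down $\lambda>-1$; and your Laplace-transform evaluation of $\mathbb{E}[A^{\lambda/2}]$ checks out in both regimes, since for $0<s<1$ the identity $a^{s}=\frac{s}{\Gamma(1-s)}\int_0^\infty t^{-1-s}(1-e^{-ta})\,\mathrm{d}t$ together with the substitution $v=t^{\alpha/2}$ yields $\Gamma(1-s/\rho)/\Gamma(1-s)$ with $\rho=\alpha/2$, and for $s<0$ the alternative representation gives $\Gamma(-s/\rho)/(\rho\,\Gamma(-s))$, which coincides with the same expression by $\Gamma(1+x)=x\Gamma(x)$ — so both branches assemble to $\Gamma(1-\lambda/\alpha)/\Gamma(1-\lambda/2)$, finite exactly for $\lambda<\alpha$. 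Two minor remarks: the interchanges of $\mathbb{E}$ with the $t$-integrals need only Tonelli (all integrands are nonnegative), so your dominated-convergence splitting at $t=1$ is more machinery than required, and likewise the factorization $\mathbb{E}[|X|^\lambda]=\mathbb{E}[A^{\lambda/2}]\,\mathbb{E}[|Z|^\lambda]$ holds for nonnegative independent factors without any integrability caveat. What each approach buys: the paper's citation is shorter and inherits the precise range $-1<\lambda<\alpha$ from the literature, while your proof makes the lemma verifiable within the paper, transparently explains both range endpoints (the $-1$ from the Gaussian factor, the $\alpha$ from the stable mixing variable), and — worth noting — is essentially the argument underlying the cited Shanbhag--Sreehari result itself, so you have reconstructed the source rather than merely bypassed it; your sketched alternative via $|x|^\lambda\propto\int_0^\infty\omega^{-1-\lambda}(1-\cos\omega x)\,\mathrm{d}\omega$ is also viable but, as you say, requires more delicate constant-matching through the reflection and duplication formulas.
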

\begin{proof}
The proof follows from Theorem 3 in \cite{shanbhag1977certain} (see also equation (13) in \cite{matsui2016fractional}).
\end{proof}

\begin{corollary}
The quantity $l_{\alpha,\lambda,d}\triangleq\mathbb{E}\Vert L^{\alpha}(1)\Vert^{\lambda}$ is finite for $0\leq\lambda<\alpha$. For details, we have

(a) If $1<\lambda<\alpha$, then
\begin{align*}
\mathbb{E}\Vert L^{\alpha}(1)\Vert^{\lambda}\leq d^{\lambda}\Big(\frac{2^{\lambda}\Gamma((1+\lambda)/2)\Gamma(1-\lambda/\alpha)}{\Gamma(1/2)\Gamma(1-\lambda/2)}\Big).
\end{align*}

(b) If $0\leq\lambda\leq 1$, then
\begin{align*}
\mathbb{E}\Vert L^{\alpha}(1)\Vert^{\lambda}\leq d\Big(\frac{2^{\lambda}\Gamma((1+\lambda)/2)\Gamma(1-\lambda/\alpha)}{\Gamma(1/2)\Gamma(1-\lambda/2)}\Big).
\end{align*}

\end{corollary}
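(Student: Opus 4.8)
The plan is to reduce the $d$-dimensional fractional moment to scalar moments via the componentwise structure of $L^{\alpha}$ and then invoke Lemma~\ref{lemma:momentsOfStableDist}. First I would observe that, by the stationary-increments property in \Cref{def:symAlphaLevy}, the increment $L^{\alpha}(1)-L^{\alpha}(0)=L^{\alpha}(1)$ has, in each coordinate, the law $\sas(1^{1/\alpha})=\sas(1)$; since the $d$ scalar motions composing $L^{\alpha}$ are independent, we may write $L^{\alpha}(1)=(X_1,\dots,X_d)$ with $X_i\overset{\text{iid}}{\sim}\sas(1)$. Setting $C_{\alpha,\lambda}\triangleq \frac{2^{\lambda}\Gamma((1+\lambda)/2)\Gamma(1-\lambda/\alpha)}{\Gamma(1/2)\Gamma(1-\lambda/2)}$, Lemma~\ref{lemma:momentsOfStableDist} gives $\mathbb{E}|X_i|^{\lambda}=C_{\alpha,\lambda}$ for every $-1<\lambda<\alpha$, and this constant is finite on the whole range $0\le\lambda<\alpha$ (all Gamma arguments are strictly positive once $\lambda<\alpha\le 2$).

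Next I would pass from the Euclidean norm to the $\ell^1$ norm using $\Vert x\Vert\le\sum_{i=1}^d|x_i|$, and then split into the two regimes. For case (b), $0\le\lambda\le 1$, monotonicity of $t\mapsto t^{\lambda}$ together with its subadditivity $\big(\sum_i a_i\big)^{\lambda}\le\sum_i a_i^{\lambda}$ for nonnegative $a_i$ (an instance of \Cref{lemma:anUsefulIneq}) gives $\Vert L^{\alpha}(1)\Vert^{\lambda}\le\sum_i|X_i|^{\lambda}$; taking expectations yields $\mathbb{E}\Vert L^{\alpha}(1)\Vert^{\lambda}\le d\,C_{\alpha,\lambda}$, which is (b). For case (a), $1<\lambda<\alpha$, I would instead use convexity of $t\mapsto t^{\lambda}$ (power-mean/Jensen), namely $\big(\sum_i a_i\big)^{\lambda}\le d^{\lambda-1}\sum_i a_i^{\lambda}$, to obtain $\Vert L^{\alpha}(1)\Vert^{\lambda}\le d^{\lambda-1}\sum_i|X_i|^{\lambda}$; taking expectations gives $\mathbb{E}\Vert L^{\alpha}(1)\Vert^{\lambda}\le d^{\lambda-1}\cdot d\cdot C_{\alpha,\lambda}=d^{\lambda}C_{\alpha,\lambda}$, which is (a).

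Both bounds are finite precisely because $C_{\alpha,\lambda}<\infty$ on $0\le\lambda<\alpha$, which establishes the claimed finiteness of $l_{\alpha,\lambda,d}$. There is no deep obstacle here; the only points that require care are (i) correctly invoking the stationary-increments property so that the scalar law at time $1$ is exactly $\sas(1)$ rather than a rescaled version, and (ii) matching the dimension-dependent prefactor to the statement --- the gain $d^{\lambda-1}$ is available only in the convex regime $\lambda\ge 1$, whereas in the concave regime $\lambda\le 1$ subadditivity produces the weaker factor $d$, which is exactly why the two cases carry different powers of $d$.
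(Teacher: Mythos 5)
Your proposal is correct and follows essentially the same route as the paper: both reduce $\Vert L^{\alpha}(1)\Vert$ to the $\ell^1$ sum of i.i.d.\ $\sas(1)$ coordinates, invoke Lemma~\ref{lemma:momentsOfStableDist} for the scalar moment, and use subadditivity of $t\mapsto t^{\lambda}$ for case (b). The only cosmetic difference is in case (a), where the paper applies Minkowski's inequality to $(\mathbb{E}\Vert L^{\alpha}(1)\Vert^{\lambda})^{1/\lambda}$ while you use the pointwise power-mean bound $\bigl(\sum_i a_i\bigr)^{\lambda}\leq d^{\lambda-1}\sum_i a_i^{\lambda}$; both yield the identical constant $d^{\lambda}C_{\alpha,\lambda}$.
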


\begin{proof}
Since $L^{\alpha}(1)$, by definition, is a d-dimensional vector whose components are i.i.d symmetric $\alpha$-stable distributions $L_i^{\alpha}(1)$ for $i\in\{1,\ldots,d\}$, we have
\begin{align*}
\Vert L^{\alpha}(1)\Vert\leq&\sum_{i=1}^d \vert L_i^{\alpha}(1)\vert
\end{align*}

(a) $1<\lambda<\alpha$. By using Minkowski's inequality and Lemma~\ref{lemma:momentsOfStableDist},
\begin{align*}
(\mathbb{E}\Vert L^{\alpha}(1)\Vert^{\lambda})^{1/\lambda}\leq&\Big(\mathbb{E}\Big[\Big(\sum_{i=1}^d \vert L_i^{\alpha}(1)\vert\Big)^{\lambda}\Big]\Big)^{1/\lambda}\\
\leq&\sum_{i=1}^d (\mathbb{E}\vert L_i^{\alpha}(1)\vert^{\lambda})^{1/\lambda}\\
=&d\Big(\frac{2^{\lambda}\Gamma((1+\lambda)/2)\Gamma(1-\lambda/\alpha)}{\Gamma(1/2)\Gamma(1-\lambda/2)}\Big)^{1/\lambda}.
\end{align*}
Thus, we have
\begin{align*}
\mathbb{E}\Vert L^{\alpha}(1)\Vert^{\lambda}\leq d^{\lambda}\Big(\frac{2^{\lambda}\Gamma((1+\lambda)/2)\Gamma(1-\lambda/\alpha)}{\Gamma(1/2)\Gamma(1-\lambda/2)}\Big).
\end{align*}

(b) $0\leq\lambda\leq 1$. By using Lemma~\ref{lemma:anUsefulIneq} and Lemma~\ref{lemma:momentsOfStableDist} ,
\begin{align*}
\mathbb{E}\Vert L^{\alpha}(1)\Vert^{\lambda}\leq&\mathbb{E}\Big[\Big(\sum_{i=1}^d \vert L_i^{\alpha}(1)\vert\Big)^{\lambda}\Big]\\
\leq&\sum_{i=1}^d \mathbb{E}\vert L_i^{\alpha}(1)\vert^{\lambda}\\
=&d\Big(\frac{2^{\lambda}\Gamma((1+\lambda)/2)\Gamma(1-\lambda/\alpha)}{\Gamma(1/2)\Gamma(1-\lambda/2)}\Big).
\end{align*}

\end{proof}

\begin{lemma}\label{lemma:expecataionBound} Let us denote the value $\mathbb{E}\Vert L^{\alpha}(1)\Vert^{\lambda}$ by $l_{\alpha,\lambda,d}<\infty$. For $0<\eta\leq\frac{m}{M^2}$ and $s\in[j\eta,(j+1)\eta)$, we have the following estimates:

(a) If $1<\lambda<\alpha$ and $1<\gamma\lambda<\alpha$ then
\begin{align*}
&\mathbb{E}\Vert X_2(j\eta)\Vert^{\lambda}\leq B_{j,\lambda}\triangleq\Big( \Big(\mathbb{E}\Vert X_2(0)\Vert^{\lambda}\Big)^{\frac{1}{\lambda}} + j\Big((2\eta (b+m))^{\frac{1}{2}} +2^{\frac{1}{2}}\eta B +\Big(\frac{\eta}{\beta}\Big)^{\frac{1}{\alpha}}l_{\alpha,\lambda,d}^{\frac{1}{\lambda}}\Big)\Big)^{\lambda},\\
&\mathbb{E}\Vert X_2(s)\Vert^{\lambda}\leq \Big(B_{j,\lambda}^{\frac{1}{\lambda}} + (s-j\eta)\Big(M B_{j,\gamma\lambda}^{\frac{1}{\lambda}} + B\Big) + \Big(\frac{s-j\eta}{\beta}\Big)^{\frac{1}{\alpha}}l_{\alpha,\lambda,d}^{\frac{1}{\lambda}}\Big)^{\lambda}.
\end{align*}

(b) If $0\leq\lambda\leq 1$ then
\begin{align*}
&\mathbb{E}\Vert X_2(j\eta)\Vert^{\lambda}\leq \bar{B}_{j,\lambda}\triangleq\mathbb{E}\Vert X_2(0)\Vert^{\lambda} + j\Big((2\eta (b+m))^{\frac{\lambda}{2}} +2^{\frac{\lambda}{2}}(\eta B)^{\lambda} +\Big(\frac{\eta}{\beta}\Big)^{\frac{\lambda}{\alpha}}l_{\alpha,\lambda,d}\Big),\\
&\mathbb{E}\Vert X_2(s)\Vert^{\lambda}\leq \bar{B}_{j,\lambda} + (s-j\eta)^{\lambda}\Big(M^{\lambda} \bar{B}_{j,\gamma\lambda} + B^{\lambda}\Big) + \Big(\frac{s-j\eta}{\beta}\Big)^{\frac{\lambda}{\alpha}}l_{\alpha,\lambda,d}.
\end{align*}

(c) If $1<\lambda<\alpha$ and $0\leq\gamma\lambda\leq 1$ then
\begin{align*}
&\mathbb{E}\Vert X_2(j\eta)\Vert^{\lambda}\leq B_{j,\lambda},\\
&\mathbb{E}\Vert X_2(s)\Vert^{\lambda}\leq \Big(B_{j,\lambda}^{\frac{1}{\lambda}} + (s-j\eta)\Big(M \bar{B}_{j,\gamma\lambda}^{\frac{1}{\lambda}} + B\Big) + \Big(\frac{s-j\eta}{\beta}\Big)^{\frac{1}{\alpha}}l_{\alpha,\lambda,d}^{\frac{1}{\lambda}}\Big)^{\lambda}.
\end{align*}

% where
% \begin{align*}
% & b_1\triangleq\Big( \Big(\mathbb{E}\Vert X_2(0)\Vert^{\lambda}\Big)^{\frac{1}{\lambda}} + (j+1)\Big((2\eta (b+m))^{\frac{1}{2}} +2^{\frac{1}{2}}\eta B +\eta^{\frac{1}{\alpha}}l_{\alpha,\lambda,d}^{\frac{1}{\lambda}}\Big)\Big)^{\lambda},\\
% & b_2\triangleq\Big(\Big(\mathbb{E}\Vert X_2(0)\Vert^{\lambda}\Big)^{\frac{1}{\lambda}} + j\Big((2\eta b)^{\frac{1}{2}} +2^{\frac{1}{2}}\eta B +\eta^{\frac{1}{\alpha}}l_{\alpha,\lambda,d}^{\frac{1}{\lambda}}\Big)\Big)^{\lambda},\\
% & b_3\triangleq\Big( 1+2\eta\vert m-b\vert+2\eta^2(M^2+B^2)\Big)^{\frac{\lambda}{2}} + (j_{min}-1)\Big((2\eta b)^{\frac{\lambda}{2}} +2^{\frac{\lambda}{2}}(\eta B)^{\lambda}\Big) +j_{min}\eta^{\frac{\lambda}{\alpha}}l_{\alpha,\lambda,d},\\
% & b_4\triangleq\mathbb{E}\Vert X_2(0)\Vert^{\lambda} + j\Big((2\eta b)^{\frac{\lambda}{2}} +2^{\frac{\lambda}{2}}(\eta B)^{\lambda} +\eta^{\frac{\lambda}{\alpha}}l_{\alpha,\lambda,d}\Big),\\
% & b_5\triangleq\Big(B_{j,\lambda}^{\frac{1}{\lambda}} + (s-j\eta)\Big(M B_{j,\gamma\lambda}^{\frac{1}{\lambda}} + B\Big) + (s-j\eta)^{\frac{1}{\alpha}}l_{\alpha,\lambda,d}^{\frac{1}{\lambda}}\Big)^{\lambda},\\
% & b_6\triangleq B_{j,\lambda} + (s-j\eta)^{\lambda}\Big(M B_{j,\gamma\lambda} + B^{\lambda}\Big) + (s-j\eta)^{\frac{\lambda}{\alpha}}l_{\alpha,\lambda,d}.
% \end{align*}
% Here, $j_{min}$ denotes the smallest positive integer such that $j_{min}<j$ and $\Vert X_2((j-j_{min})\eta)\Vert\leq 1$.
\end{lemma}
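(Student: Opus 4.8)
The plan is to reduce the whole statement to a single pointwise one-step inequality for the FLA drift and then propagate it through the explicit recursion; no stochastic calculus is needed here, since $X_2(j\eta)=W^j$ satisfies the exact recursion and the interpolated values follow from solving the SDE with a frozen drift over one sub-interval. First I would establish the almost-sure bound
\[
\Vert x - \eta c_\alpha\nabla f(x)\Vert \leq \Vert x\Vert + (2\eta(b+m))^{1/2} + 2^{1/2}\eta B, \qquad \forall x\in\mathbb{R}^d,
\]
valid whenever $0<\eta\leq m/M^2$. To obtain it I expand $\Vert x-\eta c_\alpha\nabla f(x)\Vert^2 = \Vert x\Vert^2 - 2\eta c_\alpha\langle x,\nabla f(x)\rangle + \eta^2 c_\alpha^2\Vert\nabla f(x)\Vert^2$, bound the cross term via Assumption~\Cref{assump:dissipative} by $-2\eta m\Vert x\Vert^{1+\gamma}+2\eta b$, and bound $c_\alpha^2\Vert\nabla f(x)\Vert^2\leq 2M^2\Vert x\Vert^{2\gamma}+2B^2$ using Assumptions~\Cref{assump:boundedGradAtZero} and \Cref{assump:HolderContinuity}. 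The step-size condition converts $2\eta^2M^2\Vert x\Vert^{2\gamma}$ into $2\eta m\Vert x\Vert^{2\gamma}$, and since $\gamma<1$ the scalar map $t\mapsto t^{2\gamma}-t^{1+\gamma}$ is uniformly bounded by $1$ on $[0,\infty)$, so the two $\Vert x\Vert$-dependent terms collapse into the constant $2\eta m$. Collecting gives $\Vert x-\eta c_\alpha\nabla f(x)\Vert^2 \leq \Vert x\Vert^2 + 2\eta(b+m) + 2\eta^2 B^2$, and subadditivity of the square root yields the displayed estimate.

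Next I would iterate along the recursion $W^{j+1}=W^j-\eta c_\alpha\nabla f(W^j)+(\eta/\beta)^{1/\alpha}\Delta L^{\alpha}_{j+1}$. In case (a), and for the discrete estimate in case (c), where $\lambda>1$, I apply Minkowski's inequality to peel off the noise and combine it with the pointwise drift bound, using the stationarity and self-similarity of the L\'evy motion (\Cref{def:symAlphaLevy}) to identify the increment's $\lambda$-moment as $(\eta/\beta)^{1/\alpha}l_{\alpha,\lambda,d}^{1/\lambda}$, finite because $\lambda<\alpha$. This produces the one-step estimate $(\mathbb{E}\Vert W^{j+1}\Vert^\lambda)^{1/\lambda}\leq(\mathbb{E}\Vert W^j\Vert^\lambda)^{1/\lambda}+(2\eta(b+m))^{1/2}+2^{1/2}\eta B+(\eta/\beta)^{1/\alpha}l_{\alpha,\lambda,d}^{1/\lambda}$, and a telescoping induction over $j$ gives $B_{j,\lambda}$. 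In case (b), where $0\leq\lambda\leq1$, Minkowski fails and is replaced throughout by the subadditivity inequality $\Vert a+b\Vert^\lambda\leq\Vert a\Vert^\lambda+\Vert b\Vert^\lambda$ (\Cref{lemma:anUsefulIneq}), which raises each additive constant to the power $\lambda$ and yields the additive recursion defining $\bar{B}_{j,\lambda}$.

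Finally, for $s\in[j\eta,(j+1)\eta)$ I would use the interpolation identity $X_2(s)=X_2(j\eta)-(s-j\eta)c_\alpha\nabla f(X_2(j\eta))+\beta^{-1/\alpha}(L^{\alpha}(s)-L^{\alpha}(j\eta))$ and apply the same inequality (Minkowski or subadditivity, according to whether $\lambda>1$ or $\lambda\leq1$) term by term. The gradient term is controlled by $c_\alpha\Vert\nabla f(X_2(j\eta))\Vert\leq M\Vert X_2(j\eta)\Vert^\gamma+B$, whose moment reduces to the already-established $\gamma\lambda$-moment bound on $X_2(j\eta)$; this reduction is exactly where the three cases diverge, since the valid bound on $\mathbb{E}\Vert X_2(j\eta)\Vert^{\gamma\lambda}$ is $B_{j,\gamma\lambda}$ when $\gamma\lambda>1$ (case (a)) and $\bar{B}_{j,\gamma\lambda}$ when $\gamma\lambda\leq1$ (cases (b) and (c)). The noise increment over $[j\eta,s]$ again contributes $((s-j\eta)/\beta)^{1/\alpha}l_{\alpha,\lambda,d}^{1/\lambda}$ by self-similarity, completing each displayed bound.

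The main obstacle is the pointwise drift bound: dissipativity supplies only a $-\Vert x\Vert^{1+\gamma}$ term while the squared gradient forces a $+\Vert x\Vert^{2\gamma}$ term, and because the driving noise is heavy-tailed we cannot tolerate any residual polynomial growth in $\Vert x\Vert$. The decisive points are that $\eta\leq m/M^2$ exactly matches the coefficients of these two competing terms and that $\gamma<1$ forces $2\gamma<1+\gamma$, so their difference is bounded by an absolute constant; the exclusion of $\gamma=1$ is essential for this cancellation. The remaining effort is bookkeeping, chiefly tracking which regime $\gamma\lambda$ occupies so that the correct constant, $B_{j,\gamma\lambda}$ versus $\bar{B}_{j,\gamma\lambda}$, is invoked consistently across the three cases.
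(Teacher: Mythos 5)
Your proposal is correct and follows essentially the same route as the paper's proof: the identical squared-norm expansion combined with \Cref{assump:dissipative}, the gradient bound, and the step-size condition $\eta\leq m/M^2$ to obtain the one-step drift estimate, then Minkowski's inequality for $\lambda>1$ versus the subadditivity inequality (Lemma~\ref{lemma:anUsefulIneq}) for $\lambda\leq 1$, telescoping over $j$, and the frozen-drift interpolation with the same $\gamma\lambda$-moment case split ($B_{j,\gamma\lambda}$ vs.\ $\bar{B}_{j,\gamma\lambda}$). The only cosmetic difference is that you take the square root of the one-step bound pointwise before raising to the power $\lambda$, whereas the paper applies the subadditivity lemma at exponent $\lambda/2$ and then $1/\lambda$, which is arithmetically equivalent with identical constants (as a minor side note, the cancellation $t^{2\gamma}-t^{1+\gamma}\leq 1$ also holds at $\gamma=1$, so that particular step does not itself require the strict exclusion of $\gamma=1$).
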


\begin{proof}
Starting from
\begin{align*}
X_2((j+1)\eta) = X_2(j\eta) - \eta c_\alpha\nabla f(X_2(j\eta)) + \Big(\frac{\eta}{\beta}\Big)^{\frac{1}{\alpha}}L^{\alpha}(1),
\end{align*}
we have either (by Minkowski, if $\lambda>1$)
\begin{align}\label{eqn:lambdaGreatOne}
\Big(\mathbb{E}\Vert X_2((j+1)\eta)\Vert^\lambda\Big)^{\frac{1}{\lambda}} &\leq\Big(\mathbb{E}\Vert X_2(j\eta) - \eta c_\alpha\nabla f(X_2(j\eta))\Vert^{\lambda}\Big)^{\frac{1}{\lambda}} +\Big(\frac{\eta}{\beta}\Big)^{\frac{1}{\alpha}}\Big(\mathbb{E}\Vert L^{\alpha}(1)\Vert^{\lambda}\Big)^{\frac{1}{\lambda}},
\end{align}
or (by Lemma \ref{lemma:anUsefulIneq}, if $0\leq\lambda\leq 1$)
\begin{align}\label{eqn:lambdaLessOne}
\mathbb{E}\Vert X_2((j+1)\eta)\Vert^\lambda &\leq\mathbb{E}\Vert X_2(j\eta) - \eta c_\alpha\nabla f(X_2(j\eta))\Vert^{\lambda}+\Big(\frac{\eta}{\beta}\Big)^{\frac{\lambda}{\alpha}}\mathbb{E}\Vert L^{\alpha}(1)\Vert^{\lambda}.
\end{align}
We have
\begin{align}\label{eqn:boundOnGradDesc}
\nonumber\Vert X_2(j\eta) - \eta c_\alpha\nabla f(X_2(j\eta))\Vert^{\lambda} &= \Vert X_2(j\eta) - \eta c_\alpha\nabla f(X_2(j\eta))\Vert^{2\times\frac{\lambda}{2}}\\
\nonumber &=\Big(\Vert X_2(j\eta)\Vert^2 - 2\eta c_\alpha\langle X_2(j\eta),\nabla f(X_2(j\eta)\rangle +\eta^2\Vert c_\alpha\nabla f(X_2(j\eta)\Vert^2\Big)^{\frac{\lambda}{2}}\\
& \leq\Big(\Vert X_2(j\eta)\Vert^2 - 2\eta(m\Vert X_2(j\eta)\Vert^{1+\gamma}-b) +\eta^2(2M^2\Vert X_2(j\eta)\Vert^{2\gamma}+2B^2) \Big)^{\frac{\lambda}{2}},
\end{align}
where we have used assumption \Cref{assump:dissipative} and Lemma \ref{lemma:gradientBound}. For $0<\eta\leq\frac{m}{M^2}$,
\begin{align*}
2\eta m(\Vert X_2(j\eta)\Vert^{1+\gamma}+1)\geq 2\eta^2 M^2\Vert X_2(j\eta)\Vert^{2\gamma}.&& \text{(since $1+\gamma>2\gamma$ and $\eta m>\eta^2 M^2$)}
\end{align*}
Using this inequality we have
\begin{align}\label{eqn:boundOnGradDesc2}
\nonumber\Vert X_2(j\eta) - \eta c_\alpha\nabla f(X_2(j\eta))\Vert^{\lambda} &\leq\Big(\Vert X_2(j\eta)\Vert^2 + 2\eta (b+m) +2\eta^2 B^2 \Big)^{\frac{\lambda}{2}}\\
&\leq\Vert X_2(j\eta)\Vert^{\lambda} + (2\eta (b+m))^{\frac{\lambda}{2}} +2^{\frac{\lambda}{2}}(\eta B)^{\lambda}. &&\text{(by Lemma \ref{lemma:anUsefulIneq})}
\end{align}

% Then by \eqref{eqn:boundOnGradDesc},
% \begin{align}\label{eqn:X2LessOne}
% \Vert X_2((j-j_{min})\eta) - \eta\nabla f(X_2((j-j_{min})\eta))\Vert^{\lambda}\leq\Big( 1+2\eta\vert m-b\vert+2\eta^2(M^2+B^2)\Big)^{\frac{\lambda}{2}}
% \end{align}

Consider the case where $\lambda>1$. By \eqref{eqn:lambdaGreatOne} and \eqref{eqn:boundOnGradDesc2},
\begin{align*}
\Big(\mathbb{E}\Vert X_2((j+1)\eta)\Vert^\lambda\Big)^{\frac{1}{\lambda}} &\leq\Big(\mathbb{E}\Vert X_2(j\eta)\Vert^{\lambda} + (2\eta (b+m))^{\frac{\lambda}{2}} +2^{\frac{\lambda}{2}}(\eta B)^{\lambda}\Big)^{\frac{1}{\lambda}} +\Big(\frac{\eta}{\beta}\Big)^{\frac{1}{\alpha}}\Big(\mathbb{E}\Vert L^{\alpha}(1)\Vert^{\lambda}\Big)^{\frac{1}{\lambda}}\\
&\leq\Big(\mathbb{E}\Vert X_2(j\eta)\Vert^{\lambda}\Big)^{\frac{1}{\lambda}} + (2\eta (b+m))^{\frac{1}{2}} +2^{\frac{1}{2}}\eta B +\Big(\frac{\eta}{\beta}\Big)^{\frac{1}{\alpha}}l_{\alpha,\lambda,d}^{\frac{1}{\lambda}} \,\,\,\,\text{(by Lemma \ref{lemma:anUsefulIneq})}\\
&\leq\Big(\mathbb{E}\Vert X_2(0)\Vert^{\lambda}\Big)^{\frac{1}{\lambda}} + (j+1)\Big((2\eta (b+m))^{\frac{1}{2}} +2^{\frac{1}{2}}\eta B +\Big(\frac{\eta}{\beta}\Big)^{\frac{1}{\alpha}}l_{\alpha,\lambda,d}^{\frac{1}{\lambda}}\Big).
% &\leq\Big(\mathbb{E}\Vert X_2((j-j_{min}+1)\eta)\Vert^{\lambda}\Big)^{\frac{1}{\lambda}} + j_{min}\Big((2\eta b)^{\frac{1}{2}} +2^{\frac{1}{2}}\eta B +\eta^{\frac{1}{\alpha}}l_{\alpha,\lambda,d}^{\frac{1}{\lambda}}\Big)\\
% &\leq\Big(\mathbb{E}\Vert X_2((j-j_{min})\eta) - \eta\nabla f(X_2((j-j_{min})\eta))\Vert^{\lambda}\Big)^{\frac{1}{\lambda}} +\eta^{\frac{1}{\alpha}}l_{\alpha,\lambda,d}^{\frac{1}{\lambda}}+
% \\&+ j_{min}\Big((2\eta b)^{\frac{1}{2}} +2^{\frac{1}{2}}\eta B +\eta^{\frac{1}{\alpha}}l_{\alpha,\lambda,d}^{\frac{1}{\lambda}}\Big)\\
% &\leq\Big( 1+2\eta\vert m-b\vert+2\eta^2(M^2+B^2)\Big)^{\frac{1}{2}} + j_{min}\Big((2\eta b)^{\frac{1}{2}} +2^{\frac{1}{2}}\eta B\Big) +(j_{min}+1)\eta^{\frac{1}{\alpha}}l_{\alpha,\lambda,d}^{\frac{1}{\lambda}}.
\end{align*}
% If there is no such $j_{min}$, we simply have
% \begin{align*}
% \Big(\mathbb{E}\Vert X_2((j+1)\eta)\Vert^\lambda\Big)^{\frac{1}{\lambda}} &\leq\Big(\mathbb{E}\Vert X_2(0)\Vert^{\lambda}\Big)^{\frac{1}{\lambda}} + (j+1)\Big((2\eta b)^{\frac{1}{2}} +2^{\frac{1}{2}}\eta B +\eta^{\frac{1}{\alpha}}l_{\alpha,\lambda,d}^{\frac{1}{\lambda}}\Big).
% \end{align*}
For the case where $0\leq\lambda\leq 1$, by \eqref{eqn:lambdaLessOne} and \eqref{eqn:boundOnGradDesc2},
\begin{align*}
\mathbb{E}\Vert X_2((j+1)\eta)\Vert^\lambda &\leq\mathbb{E}\Vert X_2(j\eta)\Vert^{\lambda} + (2\eta (b+m))^{\frac{\lambda}{2}} +2^{\frac{\lambda}{2}}(\eta B)^{\lambda} +\Big(\frac{\eta}{\beta}\Big)^{\frac{\lambda}{\alpha}}l_{\alpha,\lambda,d}\\
&\leq\mathbb{E}\Vert X_2(0)\Vert^{\lambda} + (j+1)\Big((2\eta (b+m))^{\frac{\lambda}{2}} +2^{\frac{\lambda}{2}}(\eta B)^{\lambda} +\Big(\frac{\eta}{\beta}\Big)^{\frac{\lambda}{\alpha}}l_{\alpha,\lambda,d}\Big).
% &\leq\mathbb{E}\Vert X_2((j-j_{min}+1)\eta)\Vert^{\lambda} + j_{min}\Big((2\eta b)^{\frac{\lambda}{2}} +2^{\frac{\lambda}{2}}(\eta B)^{\lambda} +\eta^{\frac{\lambda}{\alpha}}l_{\alpha,\lambda,d}\Big)\\
% &\leq\mathbb{E}\Vert X_2((j-j_{min})\eta) - \eta\nabla f(X_2((j-j_{min})\eta))\Vert^{\lambda} +\eta^{\frac{\lambda}{\alpha}}l_{\alpha,\lambda,d}+
% \\&+ j_{min}\Big((2\eta b)^{\frac{\lambda}{2}} +2^{\frac{\lambda}{2}}(\eta B)^{\lambda} +\eta^{\frac{\lambda}{\alpha}}l_{\alpha,\lambda,d}\Big)\\
% &\leq\Big( 1+2\eta\vert m-b\vert+2\eta^2(M^2+B^2)\Big)^{\frac{\lambda}{2}} + j_{min}\Big((2\eta b)^{\frac{\lambda}{2}} +2^{\frac{\lambda}{2}}(\eta B)^{\lambda}\Big) +(j_{min}+1)\eta^{\frac{\lambda}{\alpha}}l_{\alpha,\lambda,d}.
\end{align*}
% If there is no such $j_{min}$, we simply have
% \begin{align*}
% \mathbb{E}\Vert X_2((j+1)\eta)\Vert^\lambda &\leq\mathbb{E}\Vert X_2(0)\Vert^{\lambda} + (j+1)\Big((2\eta b)^{\frac{\lambda}{2}} +2^{\frac{\lambda}{2}}(\eta B)^{\lambda} +\eta^{\frac{\lambda}{\alpha}}l_{\alpha,\lambda,d}\Big).
% \end{align*}
Now, from the identification, for $s\in[j\eta,(j+1)\eta)$,
\begin{align*}
X_2(s)=X_2(j\eta) + (s-j\eta) c_\alpha\nabla f(X_2(j\eta)) + \Big(\frac{s-j\eta}{\beta}\Big)^{\frac{1}{\alpha}}L^{\alpha}(1),
\end{align*}
we have
\begin{align*}
\Vert X_2(s)\Vert &\leq\Vert X_2(j\eta)\Vert + (s-j\eta)c_\alpha\Vert\nabla f(X_2(j\eta))\Vert + \Big(\frac{s-j\eta}{\beta}\Big)^{\frac{1}{\alpha}}\Vert L^{\alpha}(1)\Vert\\
&\leq\Vert X_2(j\eta)\Vert + (s-j\eta)(M\Vert X_2(j\eta)\Vert^{\gamma} + B) + \Big(\frac{s-j\eta}{\beta}\Big)^{\frac{1}{\alpha}}\Vert L^{\alpha}(1)\Vert.
\end{align*}
For $\lambda >1$,
\begin{align*}
\Big(\mathbb{E}\Vert X_2(s)\Vert^{\lambda}\Big)^{\frac{1}{\lambda}} &\leq\Big(\mathbb{E}\Vert X_2(j\eta)\Vert^{\lambda}\Big)^{\frac{1}{\lambda}} + (s-j\eta)\Big(M\Big(\mathbb{E}\Vert X_2(j\eta)\Vert^{\gamma\lambda}\Big)^{\frac{1}{\lambda}} + B\Big) + \Big(\frac{s-j\eta}{\beta}\Big)^{\frac{1}{\alpha}}l_{\alpha,\lambda,d}^{\frac{1}{\lambda}}.
% &\leq B_{j,\lambda}^{\frac{1}{\lambda}} + (s-j\eta)\Big(M B_{j,\gamma\lambda}^{\frac{1}{\lambda}} + B\Big) + (s-j\eta)^{\frac{1}{\alpha}}l_{\alpha,\lambda,d}^{\frac{1}{\lambda}}.
\end{align*}
For $\lambda\leq 1$,
\begin{align*}
\mathbb{E}\Vert X_2(s)\Vert^{\lambda} &\leq\mathbb{E}\Vert X_2(j\eta)\Vert^{\lambda} + (s-j\eta)^{\lambda}\Big(M^{\lambda}\mathbb{E}\Vert X_2(j\eta)\Vert^{\gamma\lambda} + B^{\lambda}\Big) + \Big(\frac{s-j\eta}{\beta}\Big)^{\frac{\lambda}{\alpha}}l_{\alpha,\lambda,d}.
% &\leq B_{j,\lambda} + (s-j\eta)^{\lambda}\Big(M B_{j,\gamma\lambda} + B^{\lambda}\Big) + (s-j\eta)^{\frac{\lambda}{\alpha}}l_{\alpha,\lambda,d}.
\end{align*}
By replacing the estimate of $\mathbb{E}\Vert X_2(j\eta)\Vert^{\lambda}$, we obtain the desired result.

\end{proof}

\begin{lemma}\label{lemma:gradientBound} Under assumptions \Cref{assump:boundedGradAtZero} and \Cref{assump:HolderContinuity} we have
\begin{align*}
c_\alpha\Vert\nabla f(w)\Vert\leq M\Vert w\Vert^{\gamma} + B, && \forall w\in\mathbb{R}^d.
\end{align*}
\end{lemma}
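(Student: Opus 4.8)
The plan is to reduce everything to the two standing hypotheses by a single application of the triangle inequality, anchored at the origin. Concretely, I would write $\nabla f(w) = \bigl(\nabla f(w) - \nabla f(0)\bigr) + \nabla f(0)$ and take norms, so that after multiplying through by $c_\alpha$ the quantity splits as
\begin{align*}
c_\alpha\Vert\nabla f(w)\Vert \leq c_\alpha\Vert\nabla f(w) - \nabla f(0)\Vert + c_\alpha\Vert\nabla f(0)\Vert.
\end{align*}
This isolates a H\"{o}lder-variation term and a value-at-zero term, each of which is controlled by exactly one of the assumptions.

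For the first term I would invoke \Cref{assump:HolderContinuity} with the specific choice $x = w$ and $y = 0$, which gives $c_\alpha\Vert\nabla f(w) - \nabla f(0)\Vert \leq M\Vert w - 0\Vert^{\gamma} = M\Vert w\Vert^{\gamma}$. For the second term, \Cref{assump:boundedGradAtZero} directly provides $c_\alpha\Vert\nabla f(0)\Vert \leq B$. Adding the two estimates yields $c_\alpha\Vert\nabla f(w)\Vert \leq M\Vert w\Vert^{\gamma} + B$, which is precisely the claim.

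There is no genuine obstacle here: the statement is essentially the triangle inequality combined with the two hypotheses, and the factor $c_\alpha$ is carried uniformly because both \Cref{assump:boundedGradAtZero} and \Cref{assump:HolderContinuity} are already phrased with $c_\alpha$ built in. The only point worth a moment's attention is that the bound must hold for every $w\in\mathbb{R}^d$; this is automatic, since the H\"{o}lder continuity in \Cref{assump:HolderContinuity} is assumed to hold for all pairs of points, so the choice $y=0$ is admissible for arbitrary $w$.
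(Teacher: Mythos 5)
Your proposal is correct and follows essentially the same route as the paper: both apply the triangle inequality to $\nabla f(w) = (\nabla f(w)-\nabla f(0)) + \nabla f(0)$, bound the first term via \Cref{assump:HolderContinuity} with $y=0$, and the second via \Cref{assump:boundedGradAtZero}. No further comment is needed.
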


\begin{proof}
By assumption \Cref{assump:HolderContinuity} we have 
\begin{align*}
c_\alpha\Vert\nabla f(w)-\nabla f(0)\Vert\leq M\Vert w-0\Vert^{\gamma}.
\end{align*}
Since $c_\alpha\Vert\nabla f(0)\Vert\leq B$ by assumption \Cref{assump:boundedGradAtZero}, the conclusion follows.
\end{proof}

\begin{lemma} For the function $b$ defined in Lemma~\ref{lemma:earlyLemma}, we have, for $w\in\mathbb{R}^d$,
\begin{align*}
&\Vert b(w)\Vert\leq M\Vert w\Vert^{\gamma} + (B+L),\\
&\langle w,b(w)\rangle\leq (L-m)\Vert w\Vert^{1+\gamma} + (b+L).
\end{align*}

\end{lemma}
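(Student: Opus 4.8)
The plan is to exploit the uniform gap bound furnished by Assumption~\Cref{assump:uniformlyBounded}, which lets me treat the true drift $b$ as a bounded perturbation of $b_1(w,\alpha)=-c_\alpha\nabla f(w)$. Writing $b(w)=-c_\alpha\nabla f(w)+r(w)$ with $r(w)\triangleq b(w)+c_\alpha\nabla f(w)$, Assumption~\Cref{assump:uniformlyBounded} gives $\Vert r(w)\Vert\leq L$ for every $w\in\mathbb{R}^d$. Both inequalities then reduce to the known growth and dissipativity properties of $\nabla f$ combined with this uniform control on $r$.

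First I would establish the norm bound. By the triangle inequality, $\Vert b(w)\Vert\leq c_\alpha\Vert\nabla f(w)\Vert+\Vert r(w)\Vert$. Applying Lemma~\ref{lemma:gradientBound} to bound $c_\alpha\Vert\nabla f(w)\Vert\leq M\Vert w\Vert^{\gamma}+B$ and using $\Vert r(w)\Vert\leq L$ immediately yields $\Vert b(w)\Vert\leq M\Vert w\Vert^{\gamma}+(B+L)$, which is the first claimed inequality.

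For the inner-product bound, I would decompose $\langle w,b(w)\rangle=-c_\alpha\langle w,\nabla f(w)\rangle+\langle w,r(w)\rangle$. The first term is controlled by the dissipativity Assumption~\Cref{assump:dissipative}, which gives $-c_\alpha\langle w,\nabla f(w)\rangle\leq -m\Vert w\Vert^{1+\gamma}+b$. For the second term, Cauchy-Schwarz together with $\Vert r(w)\Vert\leq L$ gives $\langle w,r(w)\rangle\leq L\Vert w\Vert$. Combining these produces the intermediate bound $-m\Vert w\Vert^{1+\gamma}+b+L\Vert w\Vert$, which is \emph{not yet} in the target form.

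The one step requiring care---and essentially the only genuine obstacle---is converting the linear term $L\Vert w\Vert$ into a multiple of $\Vert w\Vert^{1+\gamma}$ plus a constant, so that it can be absorbed into the leading dissipative term. This is where I would invoke the elementary inequality underlying Lemma~\ref{lemma:anUsefulIneq}: since $1+\gamma\geq 1$, we have $\Vert w\Vert\leq\Vert w\Vert^{1+\gamma}+1$ for all $w$. Multiplying by $L$ and substituting yields $\langle w,b(w)\rangle\leq(L-m)\Vert w\Vert^{1+\gamma}+(b+L)$, completing the proof. I note that the condition $L<m$ from Assumption~\Cref{assump:uniformlyBounded} is precisely what makes the coefficient $L-m$ negative, so that the resulting bound retains its dissipative character; this will be the form needed when this lemma feeds into the moment estimate of $X_3$ in Lemma~\ref{lemma:expectationBoundOfX3}.
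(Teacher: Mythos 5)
Your proof is correct and follows essentially the same route as the paper's: both bound $\Vert b(w)\Vert$ via the triangle inequality, \textbf{H}6 and Lemma~\ref{lemma:gradientBound}, and both obtain the inner-product bound by combining Cauchy--Schwarz with the dissipativity assumption \textbf{H}3 and absorbing the linear term through $\Vert w\Vert\leq\Vert w\Vert^{1+\gamma}+1$. Your explicit naming of the perturbation $r(w)=b(w)+c_\alpha\nabla f(w)$ is purely notational (and note the absorption inequality is not literally Lemma~\ref{lemma:anUsefulIneq}, though it is equally elementary), so no substantive difference remains.
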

\begin{proof}
From assumption \Cref{assump:uniformlyBounded}, it implies that
\begin{align*}
\Vert b(w)\Vert\leq c_\alpha\Vert\nabla f(w)\Vert+L.
\end{align*}
Then, by Lemma~\ref{lemma:gradientBound},
\begin{align*}
\Vert b(w)\Vert\leq M\Vert w\Vert^{\gamma} + (B+L).
\end{align*}
Next, by Cauchy-Schwarz inequality and assumption \Cref{assump:uniformlyBounded}, we have
\begin{align*}
\langle w,b(w)+c_\alpha\nabla f(w)\rangle\leq&\Vert w\Vert L.
\end{align*}
Then, by assumption \Cref{assump:dissipative},
\begin{align*}
\langle w,b(w)\rangle\leq& -c_\alpha\langle w,\nabla f(w)\rangle + \Vert w\Vert L\\
\leq& -m\Vert w\Vert^{1+\gamma} + b + \Vert w\Vert L\\
\leq& -m\Vert w\Vert^{1+\gamma} + b + (\Vert w\Vert^{1+\gamma}+1) L\\
=& (L-m)\Vert w\Vert^{1+\gamma} + (b + L).
\end{align*}
Here, we have used the inequality $\Vert w\Vert\leq\Vert w\Vert^{1+\gamma}+1$.
\end{proof}

\begin{lemma}\label{lemma:levyMeasure}
Let $\nu$ be the L\'evy measure of a $d$-dimensional L\'evy process $L^\alpha$ whose components are independent scalar symmetric $\alpha$-stable L\'evy processes $L_1^{\alpha},\ldots,L_d^{\alpha}$. Then there exists a constant $C>0$ such that the following inequality holds with $\beta\geq1$ and $2>\alpha>1$:
\begin{align*}
\frac{1}{\beta^{2/\alpha}}\int_{\Vert x\Vert<1}\Vert x\Vert^2\nu(\text{d}x) + \frac{1}{\beta^{1/\alpha}}\int_{\Vert x\Vert\geq1}\Vert x\Vert\nu(\text{d}x)\leq C\frac{d}{\beta^{1/\alpha}}.
\end{align*}
\end{lemma}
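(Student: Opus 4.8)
The plan is to exploit the special structure of the Lévy measure $\nu$ coming from the independence of the coordinate processes. Since $L^\alpha$ has independent scalar symmetric $\alpha$-stable components $L_1^\alpha,\dots,L_d^\alpha$, its characteristic exponent splits as a sum of the one-dimensional exponents, each depending on a single coordinate of the frequency variable. By the L\'evy--Khintchine representation this forces $\nu$ to be concentrated on the union of the coordinate axes: $\nu(\mathrm{d}x)=\sum_{i=1}^d \nu_1(\mathrm{d}x_i)\otimes\prod_{j\neq i}\delta_0(\mathrm{d}x_j)$, where $\nu_1(\mathrm{d}u)=\kappa_\alpha\vert u\vert^{-1-\alpha}\mathrm{d}u$ is the scalar symmetric $\alpha$-stable L\'evy measure for some normalizing constant $\kappa_\alpha>0$. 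On the $i$-th axis a point has the form $x=(0,\dots,x_i,\dots,0)$, so $\Vert x\Vert=\vert x_i\vert$, and both the region $\{\Vert x\Vert<1\}$ and $\{\Vert x\Vert\geq1\}$ intersected with that axis reduce to $\{\vert x_i\vert<1\}$ and $\{\vert x_i\vert\geq1\}$ respectively.

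The first step I would carry out is therefore to rewrite each $d$-dimensional integral as $d$ identical scalar integrals. This gives
\begin{align*}
\int_{\Vert x\Vert<1}\Vert x\Vert^2\,\nu(\mathrm{d}x) = d\,\kappa_\alpha\int_{\vert u\vert<1}\vert u\vert^{1-\alpha}\,\mathrm{d}u, \qquad
\int_{\Vert x\Vert\geq1}\Vert x\Vert\,\nu(\mathrm{d}x) = d\,\kappa_\alpha\int_{\vert u\vert\geq1}\vert u\vert^{-\alpha}\,\mathrm{d}u.
\end{align*}
The second step is to verify convergence, which is precisely where the hypothesis $1<\alpha<2$ enters and is the crux of the argument. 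Near the origin the exponent $1-\alpha$ exceeds $-1$ exactly when $\alpha<2$, yielding $\int_{\vert u\vert<1}\vert u\vert^{1-\alpha}\mathrm{d}u=2/(2-\alpha)$; in the tail the exponent $-\alpha$ is below $-1$ exactly when $\alpha>1$, yielding $\int_{\vert u\vert\geq1}\vert u\vert^{-\alpha}\mathrm{d}u=2/(\alpha-1)$. Both quantities are finite positive constants depending only on $\alpha$.

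The final step is to combine the two pieces with the prefactors and use $\beta\geq1$. Since $2/\alpha\geq1/\alpha$, we have $\beta^{-2/\alpha}\leq\beta^{-1/\alpha}$, so
\begin{align*}
\frac{1}{\beta^{2/\alpha}}\int_{\Vert x\Vert<1}\Vert x\Vert^2\,\nu(\mathrm{d}x) + \frac{1}{\beta^{1/\alpha}}\int_{\Vert x\Vert\geq1}\Vert x\Vert\,\nu(\mathrm{d}x)
\leq \frac{d}{\beta^{1/\alpha}}\,2\kappa_\alpha\Big(\frac{1}{2-\alpha}+\frac{1}{\alpha-1}\Big),
\end{align*}
and taking $C\triangleq 2\kappa_\alpha\big(\tfrac{1}{2-\alpha}+\tfrac{1}{\alpha-1}\big)$ completes the proof. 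The only genuine obstacle is the integrability check in the second step: one must confirm that $1<\alpha<2$ makes the near-origin second-moment integral and the tail first-moment integral simultaneously finite, as both would blow up at the endpoints $\alpha=2$ and $\alpha=1$; everything else is bookkeeping once the axis-supported structure of $\nu$ is recognized.
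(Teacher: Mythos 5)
Your proof is correct and follows essentially the same route as the paper: the paper likewise reduces the $d$-dimensional integrals to scalar ones using the fact (cited there from Kallsen--Shiryaev) that $\nu$ is the sum of the one-dimensional $\alpha$-stable L\'evy measures on the coordinate axes, computes $\int_{\vert u\vert<1}\vert u\vert^{1-\alpha}\mathrm{d}u=2/(2-\alpha)$ and $\int_{\vert u\vert\geq1}\vert u\vert^{-\alpha}\mathrm{d}u=2/(\alpha-1)$, and concludes via $\beta^{-2/\alpha}\leq\beta^{-1/\alpha}$ for $\beta\geq1$. Your only additions --- justifying the axis-supported structure through the splitting of the characteristic exponent and carrying an explicit normalizing constant $\kappa_\alpha$ --- are harmless refinements of the same argument.
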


\begin{proof}
Using Lemma 4.1 in \cite{kallsen2006characterization}, we have
\begin{align*}
\int_{\Vert x\Vert<1}\Vert x\Vert^2\nu(\text{d}x)=&\sum_{i=1}^d \int_{\vert x_i\vert<1}\vert x_i\vert^2\frac{1}{\vert x_i\vert^{1+\alpha}}\text{d}x_i\\
=&\sum_{i=1}^d\frac{2}{2-\alpha}\\
=&\frac{2d}{2-\alpha}.
\end{align*}
Similarly, we have
\begin{align*}
\int_{\Vert x\Vert\geq1}\Vert x\Vert\nu(\text{d}x)=&\sum_{i=1}^d \int_{\vert x_i\vert\geq1}\vert x_i\vert\frac{1}{\vert x_i\vert^{1+\alpha}}\text{d}x_i\\
=&\sum_{i=1}^d\frac{2}{\alpha-1}\\
=&\frac{2d}{\alpha-1}.
\end{align*}
Combining these two equalities, we have the desired conclusion.
\end{proof}

\begin{lemma}\label{lemma:anUsefulIneq}
For $a,b\geq 0$ and $0\leq\gamma\leq 1$, we have the following inequality:
\begin{align*}
(a+b)^{\gamma}\leq a^{\gamma} + b^{\gamma}.
\end{align*}
\end{lemma}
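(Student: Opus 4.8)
The plan is to exploit the subadditivity of the map $t \mapsto t^{\gamma}$ on $[0,\infty)$, which holds precisely because this map is concave with value $0$ at the origin when $0 \le \gamma \le 1$. Rather than invoke concavity abstractly, I would reduce the two-variable claim to the elementary one-variable fact that $u^{\gamma} \ge u$ for $u \in [0,1]$ by means of a normalization.

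First I would dispose of the degenerate cases: if $a+b = 0$ then $a = b = 0$ and both sides vanish, and if $\gamma = 0$ the inequality reads $1 \le 2$. So assume $a+b > 0$ and $\gamma \in (0,1]$. Dividing the target inequality by $(a+b)^{\gamma}$ and writing $u = a/(a+b)$ and $v = b/(a+b)$, I obtain $u,v \in [0,1]$ with $u+v = 1$, so the claim becomes $u^{\gamma} + v^{\gamma} \ge 1$. The key step is the observation that for $u \in [0,1]$ and $\gamma \le 1$ one has $u^{\gamma} \ge u$, since $u^{\gamma} = u\, u^{\gamma-1}$ and $u^{\gamma-1} \ge 1$ whenever $u \le 1$ and $\gamma - 1 \le 0$. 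Applying this to both $u$ and $v$ and summing gives $u^{\gamma} + v^{\gamma} \ge u + v = 1$, and multiplying back through by $(a+b)^{\gamma}$ yields the desired bound.

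An alternative that avoids normalization is to fix $b$ and set $g(a) = a^{\gamma} + b^{\gamma} - (a+b)^{\gamma}$; then $g(0) = 0$ and $g'(a) = \gamma\big(a^{\gamma-1} - (a+b)^{\gamma-1}\big) \ge 0$, because $t \mapsto t^{\gamma-1}$ is nonincreasing for $\gamma \le 1$ and $a \le a+b$, so $g$ is nondecreasing and hence nonnegative. There is no genuine obstacle in this lemma; the only point requiring mild care is the interpretation of the powers and the derivative at the boundary values $a+b = 0$ and $\gamma \in \{0,1\}$, which the case analysis above handles directly.
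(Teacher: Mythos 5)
Your proof is correct, and it takes a genuinely different route from the paper. The paper assumes (after disposing of $a=b=0$) that $a>b\geq 0$, divides through by $a^{\gamma}$, and establishes the chain $\bigl(1+\tfrac{b}{a}\bigr)^{\gamma}\leq 1+\gamma\tfrac{b}{a}\leq 1+\tfrac{b}{a}\leq 1+\bigl(\tfrac{b}{a}\bigr)^{\gamma}$, with Bernoulli's inequality supplying the first step. You instead normalize by $(a+b)^{\gamma}$, reducing the claim to $u^{\gamma}+v^{\gamma}\geq 1$ for $u+v=1$, $u,v\in[0,1]$, and then use only the elementary fact $u^{\gamma}\geq u$ on $[0,1]$. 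Your normalization has two small advantages: it is symmetric in $a$ and $b$, so no without-loss-of-generality ordering is needed (note the paper's stated assumption $a>b\geq 0$ literally omits the case $a=b>0$, though Bernoulli's argument still goes through there with $b/a=1$), and your one-variable input is even more elementary than Bernoulli. One point of care: writing $u^{\gamma}=u\,u^{\gamma-1}$ presumes $u>0$; the case $u=0$ should be noted as holding trivially since $0^{\gamma}=0$ for $\gamma>0$ — you flag this kind of boundary issue yourself, and it is easily patched. Your second, derivative-based argument ($g(a)=a^{\gamma}+b^{\gamma}-(a+b)^{\gamma}$ with $g(0)=0$ and $g'\geq 0$ on $a>0$) is also sound and has the merit of exposing the general principle: any concave function vanishing at the origin is subadditive, of which both your argument and the paper's are instances.
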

\begin{proof}
If $a=b=0$, the inequality is trivial. Hence, let us assume that $a>b\geq 0$. We have
\begin{align*}
\Big(1+\frac{b}{a}\Big)^{\gamma}&\leq 1+\gamma \frac{b}{a} &&\text{(by Bernoulli's inequality)}\\
&\leq 1+\frac{b}{a} &&\text{(since $0\leq\gamma\leq 1$ and $\frac{b}{a}\geq 0$)}\\
&\leq 1+\Big(\frac{b}{a}\Big)^{\gamma}. &&\text{(since $0\leq\gamma\leq 1$ and $0\leq\frac{b}{a}\leq 1$)}
\end{align*}
By multiplying both sides by $a^{\gamma}>0$, we have the conclusion.
\end{proof}

\end{document}